\newcommand{\mbf}[1]{\mathbf{#1}}
\DeclareMathOperator{\range}{Ran}
\newcommand{\lp}{\langle}
\newcommand{\rp}{\rangle}
\newcommand{\ve}{\varepsilon}
\DeclareMathOperator*{\argmin}{arg\,min}
\def\Hess{{\rm Hess\,}}% Hessian
\def\supp{\mathop{\rm supp} \nolimits} % Support
\def\Ran{{\rm Ran}}
\def\sspan{{\rm Span}}
\def\and {{\rm \; and \;}}
\def\dim {{\rm \; dim  \;}}
\newcommand {\pa}{\partial}
\newcommand{\ft}[1]{\mathsf{#1}} % icii
\newtheorem{theorem}{Theorem}
\newtheorem{proposition}{Proposition}
\newtheorem{lemma}{Lemma}
\newtheorem{remark}{Remark}
\titleformat*{\section}{\normalsize\bfseries}
\titleformat*{\subsection}{\normalsize\bfseries}
\titleformat*{\subsubsection}{\normalsize\bfseries}
\titleformat*{\paragraph}{\normalsize\bfseries}
\titleformat*{\subparagraph}{\normalsize\bfseries}
\date{}
\title{Repartition of the quasi-stationary distribution and first exit point density for a double-well potential }
 \author{ Dorian Le Peutrec\thanks{Laboratoire de Math\'ematiques d'Orsay, Univ. Paris-Sud, CNRS, Universit\'e Paris-Saclay, 91405 Orsay, France. E-mail: dorian.lepeutrec@math.u-psud.fr}$\, \ \ $and $\, \ $Boris Nectoux \thanks{Institut f\"{u}r Analysis und Scientific Computing, E101-TU Wien, Wiedner Hauptstr. 8, 1040 Wien, Austria. E-mail: boris.nectoux@asc.tuwien.ac.at} }
\begin{document} 
\maketitle
\begin{abstract}
Let $f: \mathbb R^{d} \to \mathbb R$ be a smooth function and
$(X_t)_{t\ge 0}$ be the stochastic process  solution to the overdamped Langevin dynamics 
$$d X_t = -\nabla f(X_t) d t + \sqrt{h} \ d B_t.$$
Let $\Omega\subset \mathbb R^d$ be a  smooth  bounded domain
and assume
that  $f|_{\overline\Omega}$ is a double-well potential with degenerate barriers.
In this work, we study  in the small temperature regime, i.e. when $h\to 0^{+}$,  the asymptotic repartition  of the quasi-stationary distribution   of $(X_t)_{t\ge 0}$ in $\Omega$ within the two wells of $f|_{\overline\Omega}$. 
We show that this distribution
generically concentrates in precisely one well of  $f|_{\overline\Omega}$ when $h\to 0^{+}$ but can nevertheless concentrate
in both wells 
when $f|_{\overline\Omega}$ admits sufficient symmetries.
This phenomenon corresponds to the so-called  tunneling effect in semiclassical analysis. 
We also investigate in this setting the asymptotic behaviour when $h\to 0^{+}$ of the first exit point  distribution from $\Omega$ of  $(X_t)_{t\ge 0}$ when $X_0$ is distributed according to the quasi-stationary distribution.   \\
\textbf{Key words}:   overdamped Langevin  process, double-well,  metastability, tunneling effect, semiclassical analysis, quasi-stationary distribution.  \\
\textbf{AMS classification (2010)}:  35P15,   35P20, 47F05,  35Q82.
%The proof is based on an asymptotic formula from~\comment{[..]} (mainly derived  using   tools from semi-classical analysis) and mixes  techniques  from the potential theory and the large deviations theory.  
\end{abstract}
%\tableofcontents
%We prove that the concentration of $\nu_h$ when $h\to 0$ in only one well is the generic asymptotic behaviour and the    

\section{Setting and results} 

\subsection{Quasi-stationary distribution and purpose of this work}
  \label{sec1}
Let $(X_t)_{t\ge0}$ be the stochastic process solution to the  overdamped Langevin dynamics in~$\mathbb R^d$:
\begin{equation}\label{eq.langevin}
d X_t = -\nabla f(X_t) d t + \sqrt{h} \ d B_t,
\end{equation}
where $f: \mathbb R^d\to \mathbb R$ is the potential (chosen $C^\infty$ in all this work),  $h>0$ is the temperature and $(B_t)_{t\geq 0}$ is a standard $d$-dimensional Brownian motion. Let $\Omega$ be a $C^\infty$ bounded open and connected  subset of $\mathbb R^d$ and introduce 
 $$
\tau_{\Omega} =\inf \{ t\geq 0\,  | \, X_t \notin \Omega     \}$$
  the first exit time from $\Omega$.  
  A quasi-stationary distribution for the  process~\eqref{eq.langevin} 
 on $\Omega$ is a probability measure  $\mu_h$ on $\Omega$ such that, when $X_0$
 is distributed according to  $\mu_h$, what we will denote in the following by 
$X_0 \sim \mu_h$, it holds  for any time $t>0$ and any Borel set $A\subset \Omega$,  
 $$\mathbb P(X_t\in A\,  | \,   t<\tau_\Omega) =\mu_h(A).$$
%The probability measure $\nu_h$ defined by~\eqref{nuh} 
%is the unique quasi-stationary distribution of the  process~\eqref{eq.langevin} 
% on $\Omega$. 
From~\cite{champagnat2017general,gong1988killed,le2012mathematical,pinsky1985convergence}, there exists a probability measure $\nu_h$ supported in $\Omega$ such that 
  for any probability measure $\mu_0$  on $\Omega$: when $X_0\sim \mu_0$, one has   for any borel set $A\subset \Omega$,
\begin{equation}\label{conv}
\lim_{t\to +\infty} \mathbb P(X_t\in A\,  | \,   t<\tau_\Omega) = \nu_h(A).
\end{equation}  
%Here and in the following, the notation $X_0\sim \mu_0$ stands for the fact that the process~\eqref{eq.langevin} is initially distributed according to $\mu_0$.   
It follows from \eqref{conv} that $\nu_h$ is the unique quasi-stationary distribution for the  process~\eqref{eq.langevin} 
 on~$\Omega$.\medskip
 
 \noindent 
In molecular dynamics, the quasi-stationary distribution~$\nu_h$ is   used to quantify  the   metastability of the subdomain $\Omega$ of $\mathbb R^d$ as follows: for a probability measure~$\mu_0$ supported in $\Omega$, 
 the domain $\Omega$ is said to be metastable for the initial condition~$\mu_0$ if, 
  when $X_0\sim \mu_0$,  the convergence in~\eqref{conv} is much
quicker than the average  exit time  from~$\Omega$.
When~$\Omega$ is metastable, it is thus relevant to study the exit event  $(\tau_\Omega,X_{\tau_\Omega})$  of the process~\eqref{eq.langevin} from~$\Omega$ starting from~$\nu_h$, i.e. when $X_0\sim \nu_h$. 
This is used in several algorithms aiming at accelarating 
the sampling of the exit even from a metastable domain, 
see
for instance \cite{ArLe,sorensen-voter-00,le2012mathematical,perez2015long}. The study of  the metastability is a very active
field of science research which
 is at the heart of the
numerical challenges observed in molecular dynamics. We refer in particular to \cite{lelievre2016partial}
for an overview on this topic.\medskip 

\noindent
In this work, we study  the repartition when $h\to 0$ of  the  quasi-stationary distribution~$\nu_h$ within the wells of a double-well Morse
potential~$f$ with degenerate barriers  (see the assumption \textbf{[H-Well]} below).    
We show in particular that~$\nu_{h}$ generically concentrates in one well
(see Theorem~\ref{th.main} below) but can also concentrate
in both wells  when the function $f$ is (nearly) even (see Theorems~\ref{th.main2} and \ref{pr.main} below).
According to the analysis led in \cite{DLLN-saddle1} (see also the preprint~\cite{di-gesu-lelievre-le-peutrec-nectoux-Schuss} which concatenates the results of~\cite{DLLN-saddle1} and of \cite{LLN-saddle2}),
the second phenomenon can only appear when the potential function $f$ admits degenerate deepest barriers. 
It is particularly unstable (see Remark~\ref{re.unstable} below) and arises from a strong tunneling effect between the wells.
The asymptotic behaviour of the  law of $X_{\tau_\Omega}$ when $h\to 0$ is also investigated in order to  discuss the metastability of $\Omega$ for deterministic initial conditions within the wells.
%\medskip 

%\noindent
%Let us conclude this section by connecting this paper with the existing literature.

\subsection{Connections with the existing literature}
  \label{sec1}

As it will be clearly stated below in the first part of Section~\ref{sec:main-results},  the quasi-distribution $\nu_{h}$ is completely characterized
by the ground state of the Dirichlet realization
of  the infinitesimal generator $L_{f,h}^{(0)}$  of the diffusion~\eqref{eq.langevin},
\begin{align*}
L_{f,h}^{(0)} \  =\    -\frac h2 \Delta +\nabla f\cdot \nabla \ =\   \frac{1}{2h}\,e^{-\frac f h} \,\Delta_{f,h}^{(0)}\, e^{\frac fh},
\end{align*}
where $\Delta_{f,h}^{(0)}= -h^{2}\Delta + |\nabla f|^{2}-h\Delta f$ is the usual Witten Laplacian acting on functions.
In this respect, the techniques used in this work originate from the semiclassical  literature dealing with the obtention of sharp asymptotics 
on the low spectrum of $\Delta_{f,h}^{(0)}$ in the limit $h\to 0$
and we refer in particular in this direction to \cite{HKN} in the case without boundary and
to \cite{HeNi1} in the case of Dirichlet boundary conditions (see also the prior  related works
\cite{BEGK,BGK} 
using potential-theoretic methods and which motivated \cite{HKN,HeNi1}). However, these references
focus on the low spectrum of $\Delta_{f,h}^{(0)}$ and not really on the concentration of the
corresponding eigenfunctions. In addition, though they consider
multiple-well Morse potentials, they do not consider the case of degenerate  barriers.
The case of general Morse potentials $f $, allowing in particular 
degenerate barriers, has nevertheless
  been recently
treated in the case without boundary in \cite{michel2017small} (see also \cite{BD15} for related results) using the techniques of \cite{HKN,HeNi1}.\medskip

\noindent  More closely related to the present work, the already mentioned paper
\cite{DLLN-saddle1} involving both authors generalizes in particular the results of 
\cite{HeNi1} to more general multiple-well Morse potentials but actually focuses on where
 the quasi-stationary distribution (or equivalently the ground state) concentrates in $\Omega$
and  where  the exit point distribution concentrates on $\pa\Omega$. Moreover, 
our results heavily rely on intermediate results
proven in  \cite{DLLN-saddle1} (see Propositions~\ref{pr.Schuss}  and \ref{co.bdy-estim} in Section~\ref{sub.proof-pr1}).
However, the degenerate situation considered in the present paper is excluded
in \cite{DLLN-saddle1}, 
where the principal barrier of $f$ is assumed to be   non degenerate (see indeed \cite[Assumption (A1)]{DLLN-saddle1}).
%\noindent Lastly, some results of this work are reminiscent of older results
%dealing with the tunneling effect between the wells of semiclassical Schr\"odinger operators of the form $-h^2\Delta+V$, where $V$ is a potential function independent of $h$. We refer in particular to 
%\cite{He2}  giving an overview of this topic.

%Let us point out that this degenerate situation was excluded in \cite{DLLN-saddle1}. It is indeed is assumed there that the principal barrier of $f$ is   not degenerate, see~\cite[Assumption (A1)]{DLLN-saddle1}. This  work aims precisely at investing the asymptotic behavior of~$\nu_h$ in the limit $h\to 0$  when~\cite[Assumption (A1)]{DLLN-saddle1} is not satisfied, see \textbf{[H-Well]} below. 
%
%
%In the literature,  other degenerate situations have been studied.   We refer to~\cite{HelSjII,HelSjIII,He2} where degenerate settings have been investigated for semiclassical Schr\"odinger operators on manifolds without boundary. When considering  the generator  of the diffusion~\eqref{eq.langevin}, we refer   to~\cite{michel2017small} for a spectral analysis of this operator on $\mathbb R^d$ and  to~ for the study of mean transition times, both for a degenerate potential landscape $f$ on $\mathbb R^d$. 

\subsection{Double-well potential}
\label{sec:doublewell}
We assume more generally from now on that $\overline\Omega=\Omega\cup \pa \Omega$ is a $C^{\infty}$ oriented compact and  connected Riemannian manifold of dimension $d$ with boundary~$\partial \Omega$. 
The basic assumption in this work is the following:
%The function $f$ is a double-well potential with degenerate heights if:
\begin{itemize}%[leftmargin=1.6cm,rightmargin=1.5cm]
\item[] \textbf{[H-Well]}:  The function $f$ belongs to $ C^\infty( \overline \Omega , \mathbb R)$, $\vert \nabla f\vert \neq 0$ on $\pa \Omega$, and  $f:\overline \Omega\to \mathbb R$  and $f|_{\pa \Omega}$ are Morse functions. Moreover, the function $f$ has only two local minima $x_1$ and $x_2$ in $\Omega$ which satisfy 
$$ \argmin_{ \overline \Omega}f =\argmin_{ \Omega}f =\{x_1,x_2\}.$$
Finally, the open set $\{x\in\overline\Omega\,,\ f(x)<\min_{\pa \Omega}f\}$ has precisely two connected components, denoted by $\ft C_1$ and $\ft C_2$, such that for all $j\in\{1,2\}$,
$$x_j\in \ft C_j \ \ \text{ and } \  \ \pa \ft C_j\cap \pa \Omega \neq \emptyset.$$
\end{itemize}
\noindent
Under the assumption \textbf{[H-Well]},
the potential function $f$ has precisely two wells, namely
 the open sets $\ft C_1$ and $\ft C_2$. This double-well potential
 is moreover said to have degenerate barriers 
 since the depths of $\ft C_1$ and $\ft C_2$   are the same and equal 
 (see Figure \ref{fig:well-fig})
 \begin{equation}
 \label{eq.H}
H := \min_{\pa \Omega}f-\min_{\overline\Omega} f =  \min_{\pa \Omega}f-\min_{\Omega} f>0.
 \end{equation} 
 Let us also recall that a function~$g: \overline \Omega \to \mathbb R$ is a  Morse function if all its  critical points are non degenerate. This implies in particular that $g$ has a finite number of critical points. 
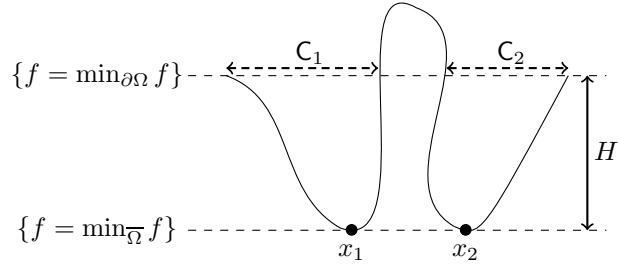
\begin{figure}[h!]
\begin{center}
\begin{tikzpicture}[scale=0.5]
\coordinate (b1) at (0,4);
\coordinate (b2) at (3,-0.04);
\coordinate (b3) at (5,5.8);
\coordinate (b4) at (6,0);
  \draw (b4) ..controls (6.7,-0.3)   .. (9,4) ;
\draw [black!100, in=150, out=-10, tension=10.1]
  (b1)[out=-20]   to  (b2)  to (b3) to (b4);
  \draw [dashed,-]   (-1,4) -- (10,4) ;
   \draw [dashed,-]   (-1,-0.1) -- (10,-0.1) ;
   %\draw [densely dashed,<->]   (-0.3,3.8) -- (-0.3,0.1) ;
   \draw (-3.4,4) node[]{$ \{f=\min_{\pa \Omega} f\}$};
   \draw (-3.4,-0.1) node[]{$ \{f=\min_{\overline\Omega} f\}$};
    \draw [thick,<->]   (9.5,-0.1) -- (9.5,4) ;
     \draw (10,2) node[]{$H$};
  % \draw (-0.7,2) node[]{$H$};
   \draw [thick, densely dashed,<->]   (0,4.2) -- (4,4.2) ;
     \draw (2.2,4.6) node[]{$ \ft C_{1}$};
     \draw [thick, densely dashed,<->]   (5.8,4.2) -- (9,4.2) ;
      \draw (7.5,4.6) node[]{$ \ft C_{2}$};
 
    \tikzstyle{vertex}=[draw,circle,fill=black,minimum size=4pt,inner sep=0pt]
%        \draw (0.0,4.5) node[]{$\bullet$}; 
\draw (3.3,-0.09) node[vertex,label=south: {$x_1$}](v){}; 
\draw (6.3,-0.09) node[vertex,label=south: {$x_2$}](v){}; 
%\draw (4.3,-0.06) node[vertex,label=south: {$x_2$}](v){}; 
    \end{tikzpicture}
\caption{\small{A one dimensional example where \textbf{[H-Well]} is satisfied.}}
 \label{fig:well-fig}
 \end{center}
\end{figure}
%Let us define
% \begin{equation}\label{H1H2}
% H_1:=\min_{\pa \Omega}f, \ \, \, \,  H_0:=\min_{\overline \Omega}f \,   \,\,\,    \text{ and  }\ \,\,  \,H:=H_1-H_0.
%\end{equation}
%\comment{besoin de $H_1$ et $H_0$?}
%Notice that under the assumption \textbf{[H-Well]}, 
%$$H>0,\,  \text{ where } H \text{ is defined in \eqref{H1H2}}.$$
%In Figures~\ref{fig:shema_nota} and~\ref{fig:shema_nota2}, one gives two schematic representations of the potential $f$ under the assumption \textbf{[H-Well]}. 
% A non degenerate critical point $z\in \overline\Omega$ of~$f$ is  said to have index $1$
%if~$\Hess f(z)$ has precisely one
%negative eigenvalue and in this case,~$z$  is called a saddle point.

\noindent
When replacing the assumption $\argmin_{ \Omega}f =\{x_1,x_2\}$ by $\argmin_{\Omega}f =\{x_1\}$ in \textbf{[H-Well]} (i.e. when the  barriers are not degenerate), it is proved in~\cite[Proposition~9]{DLLN-saddle1} that the quasi-stationary distribution $\nu_h$ concentrates in $\ft C_1$ when $h\to 0$. 
This work aims precisely at studying the degenerate case $\argmin_{ \Omega}f =\{x_1,x_2\}$
which introduces some additional technical difficulties, see the next section for some explanation.
\medskip

\noindent
 Let us assume from now on that the assumption~\textbf{[H-Well]} is satisfed.
The set of saddle points  of~$f$ of index~$1$   in~$\Omega$  is denoted by~$\ft U_1^{\Omega}$. Let us
also define 
%\begin{align}
%\label{eq.mathcalU1}
\begin{align*}
\ft U_1^{\pa \Omega}\ :\!&=\  \{z \in \pa \Omega, \,  z \text{ is a local minimum of } f|_{\pa \Omega} \}\cap \{z\in \pa \Omega ,\, \pa_nf(z)>0\}
\end{align*}
and
$$
\ft U_1^{\overline \Omega}:=  \ft U_1^{\pa \Omega} \cup \ft U_1^{ \Omega} \ \ \,  \text{ and } \, \ \ \ft m_1^{\overline \Omega}:={\rm Card}(\ft U_1^{\overline \Omega}).$$
%=\ft m_1^{\pa \Omega}+\ft m_1^{ \Omega}.$$
According to the terminology of~\cite[Section 5.2]{HeNi1}, 
we call the elements  of $\ft U_1^{\overline \Omega}$ 
the  generalized saddle points for  the Witten Laplacian acting on $1$-forms  with tangential Dirichlet boundary conditions on $\pa \Omega$.
Note that $f$ does not have  any saddle point
on $\pa\Omega$ (since $\nabla f\neq 0$ there)
but that extending $f$ by $-\infty$ outside $\overline\Omega$
(which is consistent with zero boundary Dirichlet conditions),
the elements of $\ft U_1^{\overline \Omega}$ are geometrically saddle points
(since for such an element $z$, $z$ is a local minimum of $f|_{\pa\Omega}$
and a local maximum of $f|_{D}$, where $D$ is the straight line passing through 
$z$ and orthogonal to $\pa\Omega$ at $z$).\medskip
  
\noindent
Notice that from the assumption
\textbf{[H-Well]}, one has for all $ i\in \{1,2\}$:
$$\pa \ft C_i\cap \pa \Omega\  \subset\  \ft U_1^{\pa \Omega} \cap \argmin_{\pa \Omega}f
\ =\ 
(\pa \ft C_1\cup \pa \ft C_2)\cap \pa \Omega \,.
$$
Let us define, for $ i\in   \{1,2\}$, $z_{i,1},\dots,z_{i,\ft n_i}$ by
 \begin{equation} \label{eq.ni}
 \pa \ft C_i\cap \pa \Omega  = \{z_{i,1},\dots,z_{i,\ft n_i}\}, \ \ \text{ where }  \ft n_i \ge 1 \text{ according to \textbf{[H-Well]}}. 
   \end{equation}
One defines furthermore $z_{3,1},\dots,z_{3,\ft n_3}$  by 
$$\{z_{3,1},\dots,z_{3,\ft n_3}\}=\ft U_1^{\overline \Omega}  \setminus \big(\cup_{j=1}^2 \pa \ft C_j\cap \pa \Omega\big ) \,,$$
where $\ft n_3 \in \mathbb N$ ($\ft n_3=0$ meaning $\ft U_1^{\overline \Omega}  \setminus \big(\cup_{j=1}^2 \pa \ft C_j\cap \pa \Omega\big )=\emptyset $).
From~\cite[Proposition~15]{DLLN-saddle1}, it holds
$$\pa \ft C_1\cap\pa \ft C_2 \subset \ft U_1^{\Omega}\cap \{f=\min_{\pa \Omega}f\}$$
and one orders  $z_{3,1},\dots,z_{3,\ft n_3}$ so that
$$\pa \ft C_1\cap\pa \ft C_2 = \{z_{3,1},\dots,z_{3,\ft m_3}\}  \,, $$
where  $\ft m_3 \in \{0,\dots,\ft n_3\}$.
Note finally the relation
\begin{equation}
\label{eq.m1}
\ft m_1^{\overline \Omega}= \ft n_1+\ft n_2+ \ft n_3.
\end{equation}
See Figures~\ref{fig:shema_nota} and \ref{fig:shema_nota2} for a schematic representation of the potential $f$ under \textbf{[H-Well]} when $\pa \ft C_1\cap\pa \ft C_2 =\emptyset$
and when $\pa \ft C_1\cap\pa \ft C_2 \neq\emptyset$.

\begin{figure}[h!]
\begin{center}
\begin{tikzpicture}[scale=0.55]
\tikzstyle{vertex}=[draw,circle,fill=black,minimum size=4pt,inner sep=0pt]
\tikzstyle{ball}=[circle, dashed, minimum size=1cm, draw]
\tikzstyle{point}=[circle, fill, minimum size=.01cm, draw]
\draw [rounded corners=10pt] (1,0.5) -- (-0.25,2.5) -- (1,5) -- (5,6.5) -- (7.6,3.75) -- (6,1) -- (4,0) -- (2,0) --cycle;
\draw [thick, densely dashed,rounded corners=5pt] (2,0.5) -- (.25,1.5) -- (0.5,2.5) -- (0.09,3.5) -- (2.75,3.75) -- (3.5,3) -- (2.4,2) -- (3.44,1.5) --cycle;
\draw [thick, densely dashed,rounded corners=5pt]    (2.75,3.9)  -- (3.4,4) -- (4,2) --(5.5,2.5)--(6.5,4) --(6.5,5)  -- (5,6) -- (3,5)  --cycle;
 \draw (2,1.5) node[]{$\ft C_1$};
  \draw (5,3) node[]{$\ft C_2$};
     \draw  (4.4,1.5) node[]{$\Omega$};
    \draw  (6.9,1.5) node[]{$\pa \Omega$};

\draw (0.17,3.4) node[vertex,label=north west: {$z_{1,2}$}](v){};
\draw (2,2.5) node[vertex,label=north : {$x_1$}](v){};
\draw (4.6 ,4.3) node[vertex,label=north: {$x_2$}](v){};
\draw (0.38,1.45) node[vertex,label=south west: {$z_{1,1}$}](v){};
\draw (6.2,5.2) node[vertex,label=north east: {$z_{2,1}$}](v){};
\end{tikzpicture}

%\begin{tikzpicture}[scale=0.5]
%\tikzstyle{vertex}=[draw,circle,fill=black,minimum size=4pt,inner sep=0pt]
%\tikzstyle{ball}=[circle, dashed, minimum size=1cm, draw]
%\tikzstyle{point}=[circle, fill, minimum size=.01cm, draw]
%
%\draw [dashed] (-5.4,-0.6)--(6,-0.6);
%\draw [dashed,->] (-5.4,-0.6)--(-5.4,3);
%\draw [dashed] (6,-0.6)--(6,3);
%\draw [densely dashed,<->] (-5.3,-1.6)--(5.8,-1.6);
% \draw (0,-2.1) node[]{$\pa \Omega$};
% \draw (-6.2,2.8) node[]{$f|_{\pa \Omega}$};
%\draw[thick] (-5,1.6) ..controls  (-3,-1.4).. (-2,2)  ;
%\draw[thick] (-1.8,2) ..controls  (0,-1.4).. (1.8,1.5)  ;
%\draw[thick] (5.5,1.6) ..controls  (3.5,-1.33).. (2,1.5)  ;
%\draw[thick] (-2,2) ..controls  (-1.9,2.1).. (-1.8,2)   ;
%\draw[thick] (2,1.5) ..controls  (1.91,1.6).. (1.8,1.5)   ;
%\draw[thick] (-5.4,2) ..controls  (-5.22,1.96).. (-5,1.6)  ;
%\draw[thick] (5.5,1.6)..controls  (5.77,2).. (6,2)  ;
%
%\draw (3.54,-0.58) node[vertex,label=south: {$z_{2,1}$}](v){};
%\draw (-3.17,-0.6) node[vertex,label=south: {$z_{1,1}$}](v){};
%\draw (0,-0.6)  node[vertex,label=south: {$z_{1,2}$}](v){};
%
%
%\end{tikzpicture}

\caption{\small{Schematic representation of the connected components $\ft C_1$ and $\ft C_2$ of~$\{f<\min_{\partial \Omega}f\}$  when the assumption  \textbf{[H-Well]} is  satisfied. In this representation,~$\pa{\ft C_1} \cap \pa{ \ft C_2}=\emptyset$, $\ft U_1^{\pa \Omega}=\{z_{1,1},z_{1,2},z_{2,1}\}$,~$ \pa \ft C_1\cap \pa \Omega=\{z_{1,1},z_{1,2}\}$,~$ \pa \ft C_2\cap \pa \Omega=\{z_{2,1}\}$, $\ft U_1^{ \Omega}=\emptyset$ and $\argmin_{\overline \Omega}f=\{x_1,x_2\}$. Thus, $\ft n_1=2$,  $\ft n_2=1$ and $\ft n_3=\ft m_3=0$.}   }
 \label{fig:shema_nota}
 \end{center}
\end{figure}
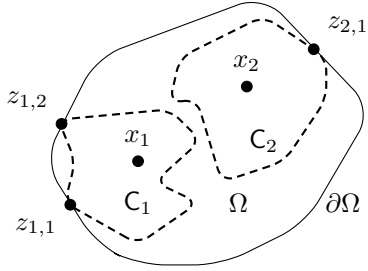

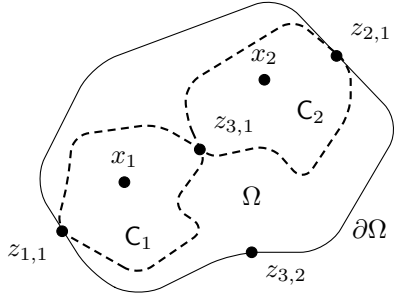
\begin{figure}[h!]
\begin{center}
\begin{tikzpicture}[scale=0.62]
\tikzstyle{vertex}=[draw,circle,fill=black,minimum size=4pt,inner sep=0pt]
\tikzstyle{ball}=[circle, dashed, minimum size=1cm, draw]
\tikzstyle{point}=[circle, fill, minimum size=.01cm, draw]
\draw [rounded corners=10pt] (1,0.5) -- (-0.25,2.5) -- (1,5) -- (5,6.5) -- (7.6,3.75) -- (6,1) -- (4,1) -- (2,0) --cycle;
\draw [thick, densely dashed,rounded corners=6pt] (2,0.5) -- (.25,1.5) -- (0.5,2.5) -- (0.6,3.5) -- (2.4,3.75) -- (3.5,3.1) -- (2.7,2) -- (3.44,1.5) --cycle;
\draw [thick, densely dashed,rounded corners=6pt]    (3,3.9)  -- (3.4,3) -- (4.8,3.3) --(5.5,2.5)--(6.5,4) --(6.5,5)  -- (5.3,6) -- (3,4.58)  --cycle;
 \draw (2,1.3) node[]{$\ft C_1$};
  \draw (5.65,4) node[]{$\ft C_2$};
     \draw  (4.4,2.2) node[]{$\Omega$};
    \draw  (6.9,1.5) node[]{$\pa \Omega$};
\draw (3.3 ,3.2) node[vertex,label=north east: {$z_{3,1}$}](v){};
\draw  (4.4,1) node[vertex,label=south east: {$z_{3,2}$}](v){};
\draw (1.7 ,2.5) node[vertex,label=north  : {$x_1$}](v){};
\draw (4.67 ,4.69) node[vertex,label=north: {$x_2$}](v){};
\draw (0.38,1.45) node[vertex,label=south west: {$z_{1,1}$}](v){};
\draw (6.2,5.2) node[vertex,label=north east: {$z_{2,1}$}](v){};
\end{tikzpicture}

%\begin{tikzpicture}[scale=0.5]
%\tikzstyle{vertex}=[draw,circle,fill=black,minimum size=4pt,inner sep=0pt]
%\tikzstyle{ball}=[circle, dashed, minimum size=1cm, draw]
%\tikzstyle{point}=[circle, fill, minimum size=.01cm, draw]
%
%\draw [dashed] (-5.4,-0.6)--(6,-0.6);
%\draw [dashed,->] (-5.4,-0.6)--(-5.4,3);
%\draw [dashed] (6,-0.6)--(6,3);
%\draw [densely dashed,<->] (-5.3,-1.5)--(5.8,-1.5);
% \draw (0,-2) node[]{$\pa \Omega$};
% \draw (-6.2,2.8) node[]{$f|_{\pa \Omega}$};
%\draw[thick] (-5,1.6) ..controls  (-3,-1.4).. (-2,2)  ;
%\draw[thick] (-1.8,2) ..controls  (0,-1.4).. (1.8,1.5)  ;
%\draw[thick] (5.5,1.6) ..controls  (3.5,-0.3).. (2,1.5)  ;
%\draw[thick] (-2,2) ..controls  (-1.9,2.1).. (-1.8,2)   ;
%\draw[thick] (2,1.5) ..controls  (1.91,1.6).. (1.8,1.5)   ;
%\draw[thick] (-5.4,2) ..controls  (-5.22,1.96).. (-5,1.6)  ;
%\draw[thick] (5.5,1.6)..controls  (5.89,2.02).. (6,2)  ;
%
%\draw (3.54,0.17) node[vertex,label=south: {$z_{3,2}$}](v){};
%\draw (-3.17,-0.6) node[vertex,label=south: {$z_{1,1}$}](v){};
%\draw (0,-0.6)  node[vertex,label=south: {$z_{2,1}$}](v){};
%
%
%\end{tikzpicture}

\caption{\small{Schematic representation of the connected components $\ft C_1$ and $\ft C_2$ of~$\{f<\min_{\partial \Omega}f\}$  when the assumption   \textbf{[H-Well]} is satisfied.  In this representation,   $\pa{\ft C_1} \cap \pa{ \ft C_2}=\{z_{3,1}\}$, $\ft U_1^{\pa \Omega}=\{z_{1,1},z_{2,1},z_{3,2}\}$,~$ \pa \ft C_1\cap \pa \Omega=\{z_{1,1}\}$,~$ \pa \ft C_2\cap \pa \Omega=\{z_{2,1}\}$, $\ft U_1^{ \Omega}=\{z_{3,1}\}$ and $\argmin_{\overline \Omega}f=\{x_1,x_2\}$. Thus, $\ft n_1=1$, $\ft n_2=1$, $\ft m_3=1$ and $\ft n_3=2$.}}
 \label{fig:shema_nota2}
 \end{center}
\end{figure}

 \subsection{Results}
 \label{sec:main-results}

\noindent{\bf Preliminary spectral analysis}
%To explain how we proceed, let us relate
%the quasi-stationary distribution~$\nu_h$  (see~\eqref{conv})
% with an eigenvalue problem. 

\noindent
 Let $L_{f,h}^{(0)}$ be  the infinitesimal generator  of the diffusion~\eqref{eq.langevin},
$$
L_{f,h}^{(0)}= \frac h2 \Delta^{(0)}_H +\nabla f\cdot \nabla,
$$
where $ \Delta^{(0)}_H$ is the Hodge Laplacian on $\overline\Omega$ and $\nabla$ the gradient associated with the metric tensor on~$\overline\Omega$.  
Let moreover $L_{f,h}^{D,(0)}$  be the differential operator $L_{f,h}^{(0)}$   on  $L^2(\Omega, e^{-\frac 2h f(x)  }dx)$  with domain 
$$
D\big( L_{f,h}^{D,(0)}\big)=\big \{ w \in H^2(\Omega,e^{-\frac 2h f(x)}dx ), \ w=0 \text{ on }   \pa \Omega   \big   \}.
$$
 The operator $L^{D,(0)}_{f,h}$
 is self-adjoint, positive, and has compact resolvent.  Moreover, its smallest eigenvalue $\lambda_{1}(h)$ is positive, non degenerate, and any eigenfunction associated with~$\lambda_{1}(h)$ has a sign on $\Omega$ (see for instance~\cite[Section~6]{Eva}). Let $u_h$ be an eigenfunction associated with $\lambda_{1}(h)$. 
According to \cite{le2012mathematical}, the quasi-stationary distribution $\nu_h$ is then given  by
\begin{equation}\label{nuh}
d\nu_h:=\frac{ u_h (x)\   e^{-\frac 2h f(x)} }{ \displaystyle \int_{\Omega}   u_h \ e^{-\frac 2h f} }dx,
\end{equation}
where $dx$ is the Lebesgue measure on $\Omega$. We assume furthermore
from now on that  
\begin{equation}\label{uh.norma}
u_h>0 \text{ on } \Omega \text{ and } \int_\Omega   u_h  ^2\, e^{-\frac 2h f}=1.
\end{equation}
 In view of~\eqref{nuh}, 
 in order to study the
 asymptotic behaviour of $\nu_h$  when $h\to 0$,
 we look for an accurate approximation of  $u_h$. 
 This is delicate since exponentially small eigenvalues of the same order are into play. Indeed, according
 to  \cite[Theorem~4]{DLLN-saddle1},
  under \textbf{[H-Well]},
it holds
$$
\lim_{h\to 0} h\ln\big(\lambda_1(h)\big) \ =\ -2(\min_{\pa \Omega}f-\min_{\overline\Omega} f)
\ =\ -2H
$$ 
and  there exists
 $C>1$ such that  for every $h>0$ small enough,
 $$1 < \frac{\lambda_2(h)}{\lambda_{1}(h)}   \le C,$$
 where $ \lambda_2(h)$ denotes the second smallest eigenvalue of 
  $L_{f,h}^{D,(0)}$.
 This makes in particular difficult to properly estimate $u_{h}$
 by simply projecting a well chosen quasi-mode on $\text{Span} (u_h)$
 since the quality of such an approximation is typically
 bounded from above by the quotient $\frac{ \lambda_1(h) }{\lambda_{2}(h)}$
 which does not tend to $0$ when $h\to 0$.
 To overcome this difficulty, 
 the key point  relies on the fact that we are able to 
 precisely 
 analyse the
  restriction of~$L^{D,(0)}_{f,h}$ to the eigenspace associated with $\lambda_{1}(h)$ and $\lambda_2(h) $.
  Indeed,
  this eigenspace has dimension two
  and the remaining eigenvalues of $L_{f,h}^{D,(0)}$
are bounded from below by $\frac{\sqrt h}2$\footnote{They are actually bounded from below by some positive constant.}.  More precisely, we have according to \cite[Theorem 3.2.3]{HeNi1} the
\begin{lemma} \label{ran1}
Let us assume that the  hypothesis \textbf{[H-Well]} is satisfied. Then, there exists $h_0>0$ such that for all $h\in (0,h_0)$,
$$
\dim \Ran \, \pi_{[0,\frac{\sqrt h}2)}\left (L^{D,(0)}_{f,h}\right )= 2,$$
where $\pi_{[0,\frac{\sqrt h}2)}\left (L^{D,(0)}_{f,h}\right )$ is the orthogonal projector on the vector space associated with the eigenvalues of $L^{D,(0)}_{f,h}$ in $[0,\frac{\sqrt h}2)$. 
\end{lemma}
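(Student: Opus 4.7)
The plan is to reduce the claim to a spectral count for the Dirichlet realization $\Delta_{f,h}^{D,(0)}$ of the Witten Laplacian on $0$-forms. Using the intertwining identity
$$L_{f,h}^{(0)} \ =\ \frac{1}{2h}\, e^{-\frac f h}\, \Delta_{f,h}^{(0)}\, e^{\frac f h}$$
recalled above, one has $\sigma\bigl(L_{f,h}^{D,(0)}\bigr) = (2h)^{-1}\,\sigma\bigl(\Delta_{f,h}^{D,(0)}\bigr)$, so the lemma amounts to showing that $\Delta_{f,h}^{D,(0)}$ admits exactly two eigenvalues in $[0,h^{3/2})$ for $h>0$ small enough.

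\textbf{Lower bound.} For each $i\in\{1,2\}$, I would pick a smooth cut-off $\chi_i \in C_c^\infty(\Omega)$ equal to $1$ near $x_i$ and supported in a small ball $B_i \subset \ft C_i$ avoiding $\pa\Omega$ and the other critical points of $f$. Then the quasi-modes
$$\psi_i \ :=\ \chi_i\, e^{-\frac{f-f(x_i)}{h}}$$
lie in $D\bigl(\Delta_{f,h}^{D,(0)}\bigr)$, satisfy $\|\psi_i\|_{L^2}^{2} \asymp h^{d/2}$ by Laplace's method at the non-degenerate minimum $x_i$, and $\bigl\|\Delta_{f,h}^{(0)}\psi_i\bigr\|_{L^2}^{2} = O\bigl(e^{-2\varepsilon/h}\bigr)$ for some $\varepsilon>0$, since the commutator with $\chi_i$ is supported in $\{f \ge f(x_i)+\varepsilon\}$. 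As $\psi_1,\psi_2$ have disjoint supports and are therefore orthogonal, the min-max principle produces two eigenvalues of $\Delta_{f,h}^{D,(0)}$ bounded by $C\,e^{-2\varepsilon/h}$, which lie in $[0,h^{3/2})$ for $h$ small; hence $\dim \Ran \pi_{[0,\sqrt h/2)}\bigl(L_{f,h}^{D,(0)}\bigr) \ge 2$.

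\textbf{Upper bound.} The core step is to prove that the third eigenvalue of $\Delta_{f,h}^{D,(0)}$ is bounded below by $c\,h$ for some $c>0$ independent of $h$; since $c\,h \gg h^{3/2}$ for $h$ small, this will force the dimension to be at most~$2$. The argument combines two standard semiclassical ingredients. First, Agmon-type decay estimates: since $|\nabla f|^2 - h\,\Delta f$ is bounded below by a strictly positive constant on any compact subset of $\overline\Omega\setminus\{x_1,x_2\}$ (using that $x_1,x_2$ are the only critical points of $f$ in $\Omega$ and that $|\nabla f|\neq 0$ on $\pa\Omega$), any $L^2$-normalized eigenfunction with eigenvalue $\mu \le c_0 h$ concentrates exponentially fast, in the Agmon metric, near $\{x_1,x_2\}$. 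Second, an IMS localization near each $x_i$ reduces the local spectral analysis to the quadratic model
$$-h^2\Delta\ +\ \bigl\langle \Hess f(x_i)(x-x_i)\,,\, x-x_i\bigr\rangle\ -\ h\,\Tr\Hess f(x_i)\, ,$$
whose ground state is simple (and accounts for one of the two small eigenvalues above) and whose first excited state has eigenvalue of order $h$. Summing over $i\in\{1,2\}$ bounds the dimension of the eigenspace of $\Delta_{f,h}^{D,(0)}$ below $h^{3/2}$ by $2$.

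The main obstacle lies in this upper bound: the Agmon estimates must be carried out uniformly in $h$ up to $\pa\Omega$, and must accommodate both the Dirichlet condition on $\pa\Omega$ and the generalized saddle points of $\ft U_1^{\overline\Omega}$, to ensure that low-lying eigenfunctions cannot leak mass near $\pa\Omega$ or near these saddles. All these ingredients are available in \cite{HeNi1}: the result is precisely \cite[Theorem~3.2.3]{HeNi1} specialized to the Morse potential $f$ of \textbf{[H-Well]}.
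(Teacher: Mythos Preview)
Your approach matches the paper's: the paper does not give an independent proof of this lemma but simply invokes \cite[Theorem~3.2.3]{HeNi1}, exactly as you do in your final sentence. Your additional sketch of the underlying quasi-mode/Agmon argument is a welcome expansion.

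One imprecision in your upper-bound sketch is worth flagging. You write that $|\nabla f|^{2}-h\Delta f$ is bounded below by a positive constant on compact subsets of $\overline\Omega\setminus\{x_{1},x_{2}\}$, ``using that $x_{1},x_{2}$ are the only critical points of $f$ in $\Omega$''. But \textbf{[H-Well]} only asserts that $x_{1},x_{2}$ are the only local \emph{minima}; the function $f$ may well have saddle points in $\Omega$ (indeed the set $\ft U_{1}^{\Omega}$ can be nonempty, see Figure~\ref{fig:shema_nota2}). At such a saddle $z$ one has $|\nabla f(z)|^{2}=0$, so your claimed lower bound fails there. The correct argument localizes near \emph{every} critical point and observes that, at a critical point of index $p\geq 1$, the harmonic model $-h^{2}\Delta+\langle \Hess f(z)\,\cdot,\cdot\rangle - h\,\Tr\Hess f(z)$ has ground-state energy $2h\sum_{\lambda_{i}<0}|\lambda_{i}|>0$, hence contributes no eigenvalue below $c\,h$. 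This is precisely what \cite[Theorem~3.2.3]{HeNi1} establishes (the count of small eigenvalues for $p$-forms equals the number of generalized critical points of index $p$), so your final citation covers the gap.
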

\begin{remark}\label{re.mu}
As a consequence of Lemma~\ref{ran1}, there exists $h_0>0$ such that for every $h\in (0,h_0)$,  the second smallest eigenvalue $\lambda_2(h)$ of $L^{D,(0)}_{f,h}$ is non degenerate. 
\end{remark}

\noindent
Moreover,
it follows  from the general analysis led in \cite{DLLN-saddle1} that
 the matrix $L$ of~$L^{D,(0)}_{f,h}\big |_{       \Ran \, \pi_{[0,\frac{\sqrt h}2)}\left (L^{D,(0)}_{f,h}\right )}$
satisfies Proposition~\ref{pr1} below. Before stating it, 
let us introduce the following notation. 
For $(\gamma(h))_{h>0} \in \mathbb R^{\mathbb R^*_+}$, one writes $\gamma(h)\eqsim \sqrt h$ 
 if there exist $C>0$ and $h_0>0$ such that for all $h\in (0,h_0)$,
 \begin{equation}\label{sim}
 \frac 1C \,  \sqrt{h}  \le \gamma(h) \le C \,  \sqrt{h}.
 \end{equation}
In addition,  for $\alpha>0$, one says that $(r(h))_{h>0}$ admits a full asymptotic expansion in~$h^\alpha$, and one writes $r(h)\sim \sum\limits_{k=0}^{+\infty} a_kh^{\alpha k}$,    if there exists a sequence $(a_k)_{k\geq 0}\in \mathbb R^{\mathbb N}$ such that for any $N\in \mathbb N$, it holds in the limit $h\to 0$: 
 \begin{equation}\label{expension-def}
r(h)=\sum_{k=0}^Na_kh^{\alpha k}+\mathcal O\big (h^{\alpha(N+1)}\big ).
\end{equation}

\begin{proposition}\label{pr1}
Let us assume that the hypothesis \textbf{[H-Well]} is satisfied.  Then, there exists $h_0>0$ such that for every $h\in (0,h_0)$, there exists an orthonormal basis $\mathcal B_0=(\varphi_{1},\varphi_{2})$ of $\Ran \, \pi_{[0,\frac{\sqrt h}2)}\left (L^{D,(0)}_{f,h}\right )$ such that  the matrix $L$ of the restriction of $L^{D,(0)}_{f,h}$ to $       \Ran \, \pi_{[0,\frac{\sqrt h}2)}\left (L^{D,(0)}_{f,h}\right )$ in~$\mathcal B_0$ has  the form:
\begin{equation}
\label{L}
L=\frac 12 \ \begin{pmatrix}
\alpha_1(h) & \ve(h) \\
\ve(h) & \alpha_2(h)
\end{pmatrix} \, \, h^{-\frac 12} \ e^{-\frac 2h H} ,
\end{equation}
where $H$ is defined in \eqref{eq.H},
\begin{itemize}%[leftmargin=1.1cm,rightmargin=1cm]
\item~$\ve(h)$ satisfies in the limit $h\to 0$:
\begin{equation}
\label{veh}
\ve(h) = \left\{
    \begin{array}{ll}
        \mathcal O\big( e^{-\frac ch}\big) & \text{ if  } \pa{\ft C_1} \cap \pa{ \ft C_2}=\emptyset  \\
        \eqsim \sqrt h  &  \text{ if  }  \pa{\ft C_1} \cap \pa{ \ft C_2}\neq \emptyset,
    \end{array}
\right.
\end{equation}
for some $c>0$ independent of $h$ and where the symbol $\eqsim$ is defined in~\eqref{sim}, 
\item there exist two sequences~$(\kappa_{1,k})_{k\geq 0}\in \mathbb R^{\mathbb N}$ and $(\kappa_{2,k})_{k\geq 0}\in \mathbb R^{\mathbb N}$ such that for $i\in \{1,2\}$, in the limit $h\to 0$:
\begin{equation}
\label{alpha1}
\alpha_i(h) \sim \left\{
    \begin{array}{ll}
        \sum \limits_{k=0}^{+\infty} \kappa_{i,k}h^{k} & \text{ if  } \pa{\ft C_1} \cap \pa{ \ft C_2}=\emptyset  \\
        \sum \limits_{k=0}^{+\infty} \kappa_{i,k}h^{\frac k2} &  \text{ if  }  \pa{\ft C_1} \cap \pa{ \ft C_2}\neq \emptyset,
    \end{array}
\right.
\end{equation}
 where the symbol $\sim$ is defined in~\eqref{expension-def} and 
\begin{equation}
\label{aj}
\kappa_{i,0}=\sum_{j=1}^{\ft n_i} \frac{ 2\,  \pa_nf(z_{i,j})}{\pi^{\frac 12}} \, \frac{ \sqrt{ {\rm det \ Hess } f   (x_{i}) }  }{  \sqrt{ {\rm det \ Hess } f|_{\partial \Omega}   (z_{i,j}) }  }\, .
\end{equation}
Moreover, when $\pa{\ft C_1} \cap \pa{ \ft C_2}\neq \emptyset$, one has for every $i\in\{1,2\}$,
\begin{equation}
\label{aj-12}
\kappa_{i,1}= \sum_{j=1}^{\ft m_3}  
 \frac{|\lambda_{-}(z_{3,j})|  \, \big(\det \Hess f(x_{i})\big)^{\frac12}}{\pi\, \big| \det \Hess f(z_{3,j})   \big|^{\frac12}} ,
\end{equation}
where $\lambda_{-}(z)$ is the negative eigenvalue of $\Hess f(z)$. 
Finally, the sequence $(\kappa_{1,k})_{k\geq 1}$ (resp. $(\kappa_{2,k})_{k\geq 1}$)  only  depends on the values of the derivatives of $f$ at $x_1$ and on  $\pa \ft C_1\cap \big (\pa \Omega\cup \pa \ft C_2\big)$ (resp.  of  the derivatives of $f$ at $x_2$ and on  $\pa \ft C_2\cap \big (\pa \Omega\cup \pa \ft C_1\big)$). 

\end{itemize}

\end{proposition}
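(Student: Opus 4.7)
The plan is to transfer the problem to the Dirichlet realization of the Witten Laplacian $\Delta_{f,h}^{(0)}=-h^2\Delta+|\nabla f|^2-h\Delta f$ via the intertwining $L_{f,h}^{(0)} = \frac{1}{2h}\, e^{-f/h}\, \Delta_{f,h}^{(0)}\, e^{f/h}$ already recorded in the excerpt, and then to apply the semiclassical machinery developed in \cite{HKN,HeNi1,DLLN-saddle1}. Under this conjugation the subspace $\Ran\,\pi_{[0,\frac{\sqrt h}{2})}(L_{f,h}^{D,(0)})$, which has dimension $2$ by Lemma~\ref{ran1}, is intertwined with a 2-dimensional spectral subspace of the corresponding Dirichlet Witten Laplacian. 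I would first construct two quasi-modes $\tilde\psi_1,\tilde\psi_2$ for the Dirichlet Witten Laplacian, each localized in one of the wells $\ft C_1,\ft C_2$, by truncating $e^{-(f-\min_{\overline\Omega}f)/h}$ on subdomains of $\overline\Omega$ adapted to the geometry of the generalized saddle points $z_{i,j}$, along the lines of \cite{DLLN-saddle1}. Standard Agmon estimates then show that the projections of these quasi-modes onto the $2$-dimensional spectral subspace agree with them up to exponentially small errors, and a Gram--Schmidt procedure produces the desired orthonormal basis $\mathcal{B}_0=(\varphi_1,\varphi_2)$, with each $\varphi_i$ still essentially supported near $\ft C_i$.

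The matrix entries of $L_{f,h}^{D,(0)}$ in $\mathcal{B}_0$ can then be computed directly. For the diagonal $\langle\varphi_i,L_{f,h}^{D,(0)}\varphi_i\rangle_{L^2(e^{-2f/h}dx)}$, applying Green's identity and using that $\varphi_i$ is close to an eigenfunction in the interior reduces the main contribution to a boundary integral concentrated near the points $z_{i,j}\in\pa\ft C_i\cap\pa\Omega$. In local coordinates adapted to $\pa\Omega$ at $z_{i,j}$ the exponent $2f/h$ has a nondegenerate minimum on the boundary, and Laplace's method yields the leading coefficient $\kappa_{i,0}$: the factor $\pa_nf(z_{i,j})>0$ comes from the boundary flux, while the Hessian ratio $\sqrt{\det\Hess f(x_i)/\det\Hess f|_{\pa\Omega}(z_{i,j})}$ arises from the Gaussian integrals over the well and along the boundary. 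When $\pa\ft C_1\cap\pa\ft C_2=\emptyset$ the Laplace integrand is smooth on the relevant scale, which produces a full asymptotic expansion of $\alpha_i$ in integer powers of $h$.

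The off-diagonal term $\ve(h)$ measures tunneling between the two wells and splits into the two announced regimes. If $\pa\ft C_1\cap\pa\ft C_2=\emptyset$, the quasi-modes can be chosen with supports separated by a region where $f>\min_{\pa\Omega}f$, so Agmon exponential decay gives $\ve(h)=\mathcal O(e^{-c/h})$ with $c>0$ controlled by the Agmon distance between the two wells. If $\pa\ft C_1\cap\pa\ft C_2\neq\emptyset$, the wells share interior saddle points $z_{3,1},\ldots,z_{3,\ft m_3}$ at the critical energy $\min_{\pa\Omega}f$, the quasi-modes must overlap near them, and a Laplace analysis at each $z_{3,j}$ (in which the unstable direction contributes $|\lambda_-(z_{3,j})|$) produces both the $\sqrt h$ leading order of $\ve(h)$ and the Eyring--Kramers--type correction $\kappa_{i,1}$ to $\alpha_i$. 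The expansion then runs in half-integer powers of $h$ because boundary contributions (integer powers) mix with interior-saddle contributions (half-integer powers after the global $h^{-1/2}$ rescaling).

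The main obstacle will be the precise bookkeeping of the constants $\kappa_{i,0}$, $\kappa_{i,1}$ and the rigorous separation of boundary and interior-saddle contributions; fortunately most of this technical work is already done in \cite{DLLN-saddle1} within a much more general multi-well framework, and the task here reduces to specializing that analysis to the degenerate-barrier double-well setting of \textbf{[H-Well]}, verifying that the $2\times 2$ matrix structure survives when the two wells have equal depth, and tracking the two asymptotic regimes dictated by whether $\pa\ft C_1\cap\pa\ft C_2$ is empty or not.
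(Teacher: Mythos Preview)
Your outline is in the right spirit (quasi-modes near the wells, projection, Gram--Schmidt), but the paper takes a structurally different route that you do not mention: it never computes the entries of $L$ directly via Green's identity and a Laplace analysis on $\partial\Omega$. Instead it exploits the Witten-complex (supersymmetric) structure. Concretely, the paper introduces the Dirichlet $1$-form operator $L_{f,h}^{D,(1)}$, uses the intertwining $\nabla\,\pi_h^{(0)}=\pi_h^{(1)}\,\nabla$, and builds, in addition to the $0$-form quasi-modes $\widetilde u_1,\widetilde u_2$, a family of $1$-form quasi-modes $\widetilde\psi_{i_j}$ localized near each generalized saddle point $z_{i,j}$ (taken as truncated principal eigen-$1$-forms of local Witten Laplacians). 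After orthonormalizing on both sides one obtains the matrix $M$ of $\nabla:\Ran\pi_h^{(0)}\to\Ran\pi_h^{(1)}$ and the key factorization $L=\frac h2\,M^{*}M$. The precise asymptotics of the entries of $M$ (with full expansions) are then imported from \cite[Propositions~43 and~47]{DLLN-saddle1}, and $\alpha_i(h)$, $\varepsilon(h)$ are simply read off from $M^{*}M$: boundary saddles $z_{i,j}\in\partial\ft C_i\cap\partial\Omega$ contribute the rows giving $\kappa_{i,0}$, interior saddles $z_{3,j}\in\partial\ft C_1\cap\partial\ft C_2$ contribute the rows giving both the $\sqrt h$ size of $\varepsilon(h)$ and the coefficient $\kappa_{i,1}$.

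What this buys compared to your direct approach: first, the step ``Green's identity reduces $\langle\varphi_i,L\varphi_i\rangle$ to a boundary integral on $\partial\Omega$'' is not literally correct---one has $\langle\varphi_i,L\varphi_i\rangle=\frac h2\|\nabla\varphi_i\|^2_{L^2_w}$ with no boundary term, and the mass of $\nabla\varphi_i$ sits on $\supp\nabla\chi_i$, not on $\partial\Omega$. A naive Laplace analysis of $\|\nabla\widetilde u_i\|^2_{L^2_w}$ then depends on the choice of cut-off and will not yield the full asymptotic expansions \eqref{alpha1}; to cure this you would in effect be forced back to WKB or to the $1$-form quasi-mode construction. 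Second, the factorized approach makes the two regimes transparent: each generalized saddle point contributes exactly one row of $M$, so the split between boundary saddles ($h^{-3/4}$ rows) and interior saddles ($h^{-1/2}$ rows) immediately gives \eqref{veh} and the integer versus half-integer expansions in \eqref{alpha1}.
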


\noindent Proposition~\ref{pr1} will be proven in Section~\ref{sec.21}.
It permits to reduce  the study of  the asymptotic repartition of $\nu_h$ within the wells $\ft C_1$ and $\ft C_2$  to  linear algebra considerations in dimension two. 
Then,  when $X_0\sim \nu_h$, the study of the asymptotic concentration of the law of $X_{\tau_\Omega}$
(which occurs on a subset of $\argmin_{\pa \Omega}f$, see \cite[Definition~1]{DLLN-saddle1} for a precise definition) follows from the analysis made in~\cite{DLLN-saddle1} and based on the following formula  \cite{le2012mathematical}: for any  $F\in L^{\infty}(\partial \Omega,\mathbb R)$, it holds
\begin{equation}\label{eq.loi-bord}
\mathbb E^{\nu_h} \left [ F(X_{\tau_{\Omega}} )\right]=- \frac{h}{2\lambda_1(h)}  \frac{\displaystyle\int_{\partial \Omega}F\,\partial_n u_h  \ e^{-\frac{2}{h}  f}  }{\displaystyle\int_\Omega u_h e^{-\frac{2}{h}  f}},
\end{equation} 
where the notation $\mathbb E^{\nu_h}$ stands for the expectation when $X_0\sim \nu_h$.\medskip

\noindent{\bf Results when~$\nu_h$ concentrates in precisely one well when $h\to 0$}
 %\label{sec:main1}
%In this section we state our main results. More precisely, the section is organised as follows. In section~\ref{sec:main1},  one gives assumptions on the matrix~$L$ defined in~\eqref{L} which ensure that when~$h\to 0$, the quasi-stationary distribution~$\nu_h$ concentrates in only one of the wells~$\ft C_1$ and~$\ft C_2$. Then, in Section~\ref{sec:main2}, one give an assumption which ensures that when~$h\to 0$,~$\nu_h$ concentrates in  the two wells~$\ft C_1$ and~$\ft C_2$. 
 
 %\begin{equation} \label{eq.ai}
% a_{1,j}=\frac{  \partial_nf(z_{1,j})      }{  \sqrt{ \det \Hess f\big|_{\partial \Omega}   (z_{1,j}) }  } \left (\sum \limits_{j=1}^{n_{1}} \frac{  \partial_nf(z_{1,j})      }{  \sqrt{ \det \Hess f\big|_{\partial \Omega}   (z_{1,j}) }  }\right)^{-1}. \end{equation}

\noindent
Let us define here the following assumption:
  \begin{itemize}%[leftmargin=1.6cm,rightmargin=1.5cm]
\item[]\textbf{[H1]}: The assumption  \textbf{[H-Well]} is satisfied,
there exists $h_0>0$ such that 
\begin{align} \label{h1a}
\text{either}\ \ \  &\text{for all $h\in (0,h_0)$, }  \alpha_1(h)<\alpha_2(h),\\
\label{h1b}
\text{or}\ \  \ &\text{for all $h\in (0,h_0)$, }  \alpha_2(h)<\alpha_1(h),
\end{align} 
and it holds 
$$\lim_{h\to 0}\  \frac{\ve(h)}{\alpha_1(h)-\alpha_2(h)}=0.  $$     
\end{itemize}
%The assumption  \textbf{[H1]} is for instance satisfied  when one of the
Note that the assumption \textbf{[H1]} is generic (given an arbitrary function $f$ satisfying \textbf{[H-Well]})
according to the following:
    \begin{itemize}%[leftmargin=1.6cm,rightmargin=1.5cm]
\item    when $\pa \ft C_1\cap \pa \ft C_2=\emptyset$ and the asymptotic expansion in $h$ of $\alpha_1(h)$ and $\alpha_2(h)$ in~\eqref{alpha1} differ (i.e. when $(\kappa_{1,k})_{k\geq 0}\neq (\kappa_{2,k})_{k\geq 0}$),
the assumption  \textbf{[H1]} is satisfied and there exists $c>0$ such that when $h\to 0$ (see indeed  \eqref{veh}),
\begin{equation} 
\label{h1-ca}
 \frac{\ve(h)}{\alpha_1(h)-\alpha_2(h)}=\mathcal O\big (e^{-\frac ch}\big ),
 \end{equation}

\item  when $\pa \ft C_1\cap \pa \ft C_2\neq \emptyset$
the assumption  \textbf{[H1]} is, according to \eqref{veh} and \eqref{alpha1}, equivalent to  $\kappa_{1,0}\neq \kappa_{2,0}$, where $\kappa_{1,0}$ and $\kappa_{2,0}$ are defined in~\eqref{aj}. In this case,  when $h\to 0$:
\begin{equation} 
\label{h1-cb}
 \frac{\ve(h)}{\alpha_1(h)-\alpha_2(h)}\eqsim\mathcal O\big (\sqrt h\big ).
 \end{equation} 
    \end{itemize}
    \noindent
   Our main result under the generic assumption \textbf{[H1]} is the following.
It implies in particular that when \textbf{[H1]} holds together with \eqref{h1a},  
$\nu_h$ concentrates in any neighborhood  of $x_1$ (i.e. $\lim_{h\to 0} \nu_h(\ft O_1)=1$ for any open subset~$\ft O_1$ of $\Omega$ containing $x_1$, see more precisely~\eqref{eq.QSD-x1} below).
This can be roughly explained as follows: when \textbf{[H1]} holds,
the term  $\ve(h)$ can be neglected in the expression
 of the matrix $L$ given  in \eqref{L}, and 
 \eqref{h1a}  breaks the symmetry  between the two wells $\ft C_1$ and $\ft C_2$,
 ensuring more precisely the concentration of $\nu_{h}$ in $\ft C_1$. 
       
\begin{theorem}
\label{th.main}
Let us assume that the hypotheses \textbf{[H-Well]} and \textbf{[H1]} together with
\eqref{h1a} are satisfied. Let $\nu_h$ be the quasi-stationary distribution of the process~\eqref{eq.langevin} on $\Omega$ (see~\eqref{nuh}). 
Let  $\ft O_{1}\subset \Omega$ be an open neighborhood  of $x_{1}$ and $\ft O_{2}\subset \Omega$   be an open neighborhood  of $x_{2}$ such that $\ft O_1\cap \ft O_2=\emptyset$. Then, there exists $c>0$ such that   in the limit $h\to0$:
\begin{equation}
\label{eq.QSD-x1-0}
 \nu_h(\ft O_{1})+  \nu_h(\ft O_{2})=1
+\mathcal O\big (e^{-\frac ch}\big ),
\end{equation}
where for $k\in \{1,2\}$,
\begin{equation}
\label{eq.QSD-x1}
\nu_h(\ft O_{k})=
\delta_{1,k}+ \mathcal O\left(\frac{|\varepsilon(h)|}{|\alpha_2(h)-\alpha_1(h)|}\right) +\mathcal O\big (e^{-\frac ch}\big ).
%\text{ and }
%%\label{eq.QSD-x2}
%\int_{\ft O_{2}}v_h(dx)=
%\begin{cases}
%\mathcal O\big (e^{-\frac ch}\big ) &\text{ if } \pa\ft  C_{1}\cap \pa \ft C_{2}=\emptyset \\
%\mathcal O\big (\sqrt h\big )          &\text{ if } \pa \ft C_{1}\cap \pa \ft C_{2}\neq\emptyset.
%\end{cases}
\end{equation}
%Moreover, let $K$ be a compact set of $\ft C_1$ and let 
%$$X_0=x\in K \text{ or } X_0\sim \nu_h.$$
\begin{sloppypar}
\noindent
Moreover, for any $F\in L^{\infty}(\partial \Omega,\mathbb R)$ and for any family $(\Sigma_{i,j})_{(i,j)\in\bigcup_{p=1}^2 \{p\}\times \{1,\dots, \ft n_{p}\}}$ of  disjoint  open neighborhoods of $(z_{i,j})_{(i,j)\in\bigcup_{p=1}^2 \{p\}\times \{1,\dots, \ft n_{p}\}}$ in $\pa\Omega$,
there exists $c>0$ such that in the limit  $h\to 0$:
\begin{equation} \label{t-1}
\mathbb E^{\nu_h} \left [ F\left (X_{\tau_{\Omega}} \right )\right]=\sum \limits_{(i,j)\in\bigcup_{p=1}^2 \{p\}\times \{1,\dots, \ft n_{p}\}}
\mathbb E^{\nu_h} \left [ \mathbf{1}_{\Sigma_{i,j}}F\left (X_{\tau_{\Omega}} \right )\right]  +\mathcal O\big (e^{-\frac ch}\big  )
 \end{equation}
 and
 \begin{equation} \label{t-2}
\sum \limits_{j=1}^{\ft n_{2}}
\mathbb E^{\nu_h} \left [ \mathbf{1}_{\Sigma_{2,j}}F\left (X_{\tau_{\Omega}} \right )\right]  =
 \mathcal O\left(\frac{|\varepsilon(h)|}{|\alpha_2(h)-\alpha_1(h)|}\right) +\mathcal O\big (e^{-\frac ch}\big ). \end{equation}
In addition, when, for some $j\in\{1,\dots,\ft n_{1}\}$, $F$ is $C^{\infty}$ around $z_{1,j}$, one has when $h\to 0$:
\begin{equation}\label{t-3}
 \mathbb E^{{\nu_h}} \left [ \mathbf{1}_{\Sigma_{1,j}}F\left (X_{\tau_{\Omega}} \right )\right]=F(z_{1,j})\,a_{1,j} + \mathcal O\left(\frac{|\varepsilon(h)|}{|\alpha_2(h)-\alpha_1(h)|}\right)+
\mathcal O\big (h\big ),\end{equation}
 where,  for $i\in \{1,2\}$ and $j\in \{1,\dots,\ft n_i\}$,  the constant $a_{i,j}$ is defined by 
 \begin{equation} \label{eq.ai}
  a_{i,j}:=\frac{  \partial_nf(z_{i,j})      }{  \sqrt{ \det \Hess f\big|_{\partial \Omega}   (z_{i,j}) }  } \left (\sum \limits_{k=1}^{\ft n_{i}} \frac{  \partial_nf(z_{i,k})      }{  \sqrt{ \det \Hess f\big|_{\partial \Omega}   (z_{i,k}) }  }\right)^{-1}. 
  \end{equation}
 %  Finally, when~\eqref{h1b} holds instead of~\eqref{h1a}, then the estimates \eqref{eq.QSD-x1},~\eqref{t-1},~\eqref{t-2} and~\eqref{t-3} hold switching the indexes $1$ and $2$. 
 \end{sloppypar}
\end{theorem}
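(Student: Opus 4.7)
The plan is to exploit the two-dimensional reduction provided by Lemma~\ref{ran1} and Proposition~\ref{pr1}. Since $u_{h}$ lies in $E:=\Ran\,\pi_{[0,\sqrt h/2)}(L^{D,(0)}_{f,h})$, it writes, up to a sign $s\in\{\pm1\}$ fixing $u_{h}>0$, as $u_{h}=s\,(c_{1}(h)\,\varphi_{1}+c_{2}(h)\,\varphi_{2})$, where $(c_{1},c_{2})$ is the unit eigenvector of the explicit $2\times 2$ matrix $L$ of~\eqref{L} associated with its smaller eigenvalue $\lambda_{1}(h)$. A direct diagonalization of this matrix under \textbf{[H1]}--\eqref{h1a}, setting $\eta(h):=\varepsilon(h)/(\alpha_{2}(h)-\alpha_{1}(h))\to0$, yields
\[
c_{1}(h)=1+\mathcal O(\eta^{2}),\qquad c_{2}(h)=-\eta(h)+\mathcal O(\eta^{3}).
\]
I would moreover use the fact that the basis $(\varphi_{1},\varphi_{2})$ produced by the proof of Proposition~\ref{pr1} can be taken so that $\varphi_{i}$ is essentially localized in the well $\ft{C}_{i}$: for $i\neq k$, $\int_{\ft{O}_{k}}\varphi_{i}\,e^{-2f/h}dx$ is $\mathcal O(e^{-c/h})$ times $I_{i}:=\int_{\Omega}\varphi_{i}\,e^{-2f/h}dx$, and $I_{1}$, $I_{2}$ are of identical order in $h$ (each governed by the Laplace contribution at $x_{1}$, resp.\ $x_{2}$).

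\textbf{Proof of~\eqref{eq.QSD-x1-0}--\eqref{eq.QSD-x1}.} Plugging $u_{h}=s(c_{1}\varphi_{1}+c_{2}\varphi_{2})$ into~\eqref{nuh} and using the above localization gives
\[
\nu_{h}(\ft{O}_{1})\ =\ \frac{c_{1}I_{1}+\mathcal O(e^{-c/h})}{c_{1}I_{1}+c_{2}I_{2}}\ =\ 1+\mathcal O(\eta)+\mathcal O(e^{-c/h}),
\]
and symmetrically $\nu_{h}(\ft{O}_{2})=\mathcal O(\eta)+\mathcal O(e^{-c/h})$, which is~\eqref{eq.QSD-x1}; summing yields~\eqref{eq.QSD-x1-0}.

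\textbf{Proof of~\eqref{t-1}--\eqref{t-3} and main obstacle.} Inserting the same decomposition into~\eqref{eq.loi-bord} reduces the analysis to the asymptotics of $\int_{\partial\Omega}F\,\partial_{n}\varphi_{i}\,e^{-2f/h}\,d\sigma$. The boundary Laplace estimates of~\cite{DLLN-saddle1} (to be recalled as Propositions~\ref{pr.Schuss} and~\ref{co.bdy-estim} below) concentrate $\partial_{n}\varphi_{i}\,e^{-f/h}$ in arbitrarily small neighborhoods $\Sigma_{i,j}$ of the generalized saddles $z_{i,j}\in\partial\ft{C}_{i}\cap\partial\Omega$; their leading Laplace term, combined with the expressions of $\lambda_{1}(h)$ and of $I_{i}$, reproduces exactly the normalized weights $a_{i,j}$ of~\eqref{eq.ai}. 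This yields~\eqref{t-1}; the prefactor $c_{2}/c_{1}=\mathcal O(\eta)$ in front of every contribution near $\{z_{2,j}\}_{j}$ yields~\eqref{t-2}; and keeping the leading Laplace term around a single $z_{1,j}$ at which $F$ is smooth gives~\eqref{t-3}. The main obstacle is the regime~\eqref{h1-cb}, when $\partial\ft{C}_{1}\cap\partial\ft{C}_{2}\neq\emptyset$ and $\eta\eqsim\sqrt h$ is only polynomially small: all cross-well remainders, boundary quasi-modal errors, and subleading Laplace corrections entering the above computation must then be shown to be $o(\sqrt h)$---ideally exponentially small---so that the $\mathcal O(\eta)$ terms in~\eqref{eq.QSD-x1} and~\eqref{t-2} really govern the error. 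This forces us to carry the quasi-mode analysis underlying Proposition~\ref{pr1} to a sufficient order and to rely on exponentially small pointwise boundary remainders in~\cite{DLLN-saddle1}; fortunately these remainders are indeed of exponential type in the references we build on, which should be enough to close every estimate.
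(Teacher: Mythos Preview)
Your strategy is the paper's: diagonalize the $2\times2$ interaction matrix $L$ to write $u_h$ in the basis $(\varphi_1,\varphi_2)$ with second coefficient $\beta(h)=\mathcal O(\eta)$, pass back to the localized quasi-modes $(\widetilde u_1,\widetilde u_2)$ via the exponentially-close Gram matrix~\eqref{eq.G0}, and apply Laplace's method for~\eqref{eq.QSD-x1-0}--\eqref{eq.QSD-x1}. For the boundary part, however, your reduction to ``the asymptotics of $\int_{\partial\Omega}F\,\partial_n\varphi_i\,e^{-2f/h}$'' is not what Propositions~\ref{pr.Schuss} and~\ref{co.bdy-estim} deliver: these control the $1$-form quasi-modes $\widetilde\psi_{i_j}$ and the boundary traces $\psi_{i_j}\cdot n$, not $\partial_n\varphi_i$ directly. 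The paper bridges this via the intertwining relation $\nabla\pi_h^{(0)}=\pi_h^{(1)}\nabla$ (equation~\eqref{commute2}), which places $\nabla u_h$ in $\Ran\,\pi_h^{(1)}$ and hence permits the Parseval expansion~\eqref{eq.dec-uh}:
\[
\int_\Sigma F\,\partial_n u_h\,e^{-\frac{2}{h}f}=\sum_{(i,j)}\big\langle\nabla u_h,\psi_{i_j}\big\rangle_{L^2_w}\int_\Sigma F\,(\psi_{i_j}\cdot n)\,e^{-\frac{2}{h}f}.
\]
Proposition~\ref{interaction1} then controls the coefficients $\langle\nabla\varphi_k,\psi_{i_j}\rangle$ (yielding Proposition~\ref{lemm2} for $\langle\nabla u_h,\psi_{i_j}\rangle$), and Proposition~\ref{co.bdy-estim} the boundary integrals of $\psi_{i_j}$; combining them gives~\eqref{eq.bdy1}--\eqref{eq.bdy3} and hence~\eqref{t-1}--\eqref{t-3}. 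This expansion in the $1$-form spectral basis is the missing link in your sketch, and it is also what dissolves your ``main obstacle'': all cross-well and quasi-modal errors entering the decomposition are already $\mathcal O(e^{-c/h})$ by item~1 of Proposition~\ref{pr.Schuss} and~\eqref{eq.G0}--\eqref{eq.G1}, so the only polynomial remainder is the $\mathcal O(\eta)$ coming from $\beta(h)$ itself.
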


\begin{remark}
When  \textbf{[H-Well]} and \textbf{[H1]} are satisfied, one also obtains
from Proposition~\ref{pr1}
sharp asymptotic estimates on the two smallest eigenvalues $0<\lambda_1(h)<\lambda_2(h)$ of $L_{f,h}^{D,(0)}$ when $h\to 0$, see indeed~\eqref{eq.vp1} and~\eqref{eq.vp1'}. 
\end{remark}

\noindent 
From Theorem~\ref{th.main},  when \textbf{[H-Well]} holds  and \textbf{[H1]} is satisfied with~\eqref{h1a}, the quasi-stationary distribution $\nu_h$   concentrates  
when $h\to 0$ in   $\ft C_1$ and more precisely around any arbitrary small neighborhood of $x_{1}$. 
Moreover, when $X_0\sim \nu_h$,
 the law of $X_{\tau_\Omega}$ concentrates  when $h\to 0$ on 
 $\{z_{1,1},\dots,z_{1,\ft n_{1}}\}=\pa\ft  C_{1}\cap\pa\Omega$
with an  explicit repartition given by~\eqref{eq.ai}.
Adapting the proof of \cite[Proposition~11]{di-gesu-lelievre-le-peutrec-nectoux-Schuss} (see also~\cite{LLN-saddle2})
by using \eqref{eq.QSD-x1-0} and \eqref{eq.QSD-x1},
one can also show that when $X_0=x\in \ft C_1$, the 
law of $X_{\tau_\Omega}$ concentrates  when $h\to 0$ on 
 $\{z_{1,1},\dots,z_{1,\ft n_{1}}\}=\pa\ft  C_{1}\cap\pa\Omega$ with the  same repartition
as when $X_{0}\sim\nu_{h}$. This exhibits a metastable behavior  for such 
  initial conditions. 
 Moreover, when  $\vert \nabla f\vert \neq 0$ on $ \pa \ft C_2$, it follows from
 \cite[Theorem~2]{di-gesu-lelievre-le-peutrec-nectoux-Schuss} 
 that  when $X_0=x\in \ft C_2$, the 
law of $X_{\tau_\Omega}$ concentrates  when $h\to 0$ on 
 $\{z_{1,2},\dots,z_{1,\ft n_{2}}\}=\pa\ft  C_{2}\cap\pa\Omega$ with the repartition
 given by~\eqref{eq.ai} (with $i=2$). This exhibits a non metastable behavior for such 
  initial conditions.  
 \medskip
 
 \noindent 
   To connect with the literature dealing with semiclassical Schr\"odinger operators of the form $h^2\Delta_H^{(0)}+V$ on manifolds without boundary (where $V$ is a potential function independent of $h$),
one can say in this situation that the
 tunneling effect between the two wells is too weak to
mix their respective properties and that these two wells are hence somehow independent,
that is, in the terminology of \cite{HelSjII,HelSjIII}, weakly resonant or non resonant.
%This  is  related to
%the relation
%\begin{align*}
%\lambda_{2}(h)-\lambda_{1}(h)
%%&=\frac12\sqrt{\big  ( \alpha_2(h) -\alpha_1(h)\big )^2+ 4\ve(h)^2     } \ h^{-\frac 12}  \ e^{-\frac 2h (\min_{\pa \Omega}f-\min_{\overline\Omega}f)}
%%\\
%&= 
%\frac{1+o(1)}2\big( \alpha_2(h) -\alpha_1(h)\big ) \ h^{-\frac 12} \ e^{-\frac 2h (\min_{\pa \Omega}f-\min_{\overline\Omega}f)}\,,
%\end{align*}
% satisfied for every $h$ small enough (see \eqref{lh1}, \eqref{lh2} and \textbf{[H1]}), which makes possible the discrimination between $\lambda_{1}(h)$ and $\lambda_{2}(h)$.  
We also refer  to 
\cite{He2}  for an overview on this topic for semiclassical Schr\"odinger operators
(see in particular pp.~41--42 there).
Notice lastly that \eqref{eq.QSD-x1}  
shows that some tunneling effect of order
$\sqrt h$ appears nevertheless
when $\pa\ft  C_{1}\cap \pa \ft C_{2}\neq\emptyset$ (see indeed \eqref{h1-cb}),
contrary to the case
$\pa \ft C_{1}\cap \pa \ft C_{2}=\emptyset$ when 
$\alpha_1(h)$ and $\alpha_2(h)$ do not have the same asymptotic expansion, see \eqref{h1-ca}. As expected, when $\pa\ft  C_{1}\cap \pa \ft C_{2}\neq\emptyset$, the independence between the two
wells in this case is hence generically weaker.\medskip

%
%This precisely follows  from 
%the assumption  $\pa \ft C_{1}\cap \pa \ft C_{2}\neq\emptyset$ itself which implies some
%communication between the wells (remember that the asymptotic expansions of $2\,h^{\frac12}\,e^{2\frac Hh}\,\lambda_{1}$ and $2\,h^{\frac12}\,e^{2\frac Hh}\,\lambda_2$ are not given with those of $a$ and $b$ in that case).
%

\noindent
{\bf  Results when $\nu_h$ concentrates in both wells when $h\to 0$}

\noindent   
  Let us define here the following assumption:
  \begin{itemize}%[leftmargin=1.6cm,rightmargin=1.5cm]
\item[] \textbf{[H2]}: The assumption  \textbf{[H-Well]} is satisfied. Moreover,  there exists $h_0>0$ such that for all $h\in (0,h_0)$, it holds $$
\ve(h)\neq 0\ \ \text{and}\ \ \lim_{h\to 0}\  \frac{ \alpha_1(h)-\alpha_2(h)}{\ve(h)}=0.  
$$
\end{itemize}

\noindent
Let us exhibit situations where the assumption  \textbf{[H2]} is satisfied.
\begin{itemize}
\item When  $\pa \ft C_1\cap \pa \ft C_2\neq \emptyset$, the assumption \textbf{[H2]} is satisfied if and only if $\kappa_{1,0}=\kappa_{2,0}$ and $\kappa_{1,1}=\kappa_{2,1}$. This equivalence follows from~\eqref{veh} and~\eqref{alpha1}. Therefore, when $\pa \ft C_1\cap \pa \ft C_2\neq \emptyset$, using \eqref{aj} and \eqref{aj-12}, the assumption \textbf{[H2]} is satisfied if and only if
\begin{equation}
\label{eq1}
\sum_{j=1}^{\ft n_1}   \frac{\pa_nf(z_{1,j})   }{  \sqrt{ {\rm det \ Hess } f|_{\partial \Omega}   (z_{1,j}) }  }=\sum_{j=1}^{\ft n_2}   \, \frac{ \pa_nf(z_{2,j}) }{  \sqrt{ {\rm det \ Hess } f|_{\partial \Omega}   (z_{2,j}) }  },
\end{equation}
and
\begin{equation}
\label{eq2}
{\rm det \ Hess } f   (x_{1}) ={\rm det \ Hess } f   (x_{2}). 
\end{equation}
Moreover, it holds in this case:
\begin{equation*} 
\label{h1-cc}
\frac{ \alpha_1(h)-\alpha_2(h)}{\ve(h)}=\mathcal O\big (\sqrt h\big ).
 \end{equation*} 
\item  Let us assume that $f$  is an even function as defined by~\eqref{eq.sym} below. Then,  from Theorem~\ref{pr.main} below, the assumption \textbf{[H2]} is satisfied (see indeed Remark~\ref{re.sym1}). 
 %Let us mention that stronger results hold when $f$ is symmetric, see Proposition~\ref{pr.main} below.  

\end{itemize}
\begin{remark}
When  $\pa \ft C_1\cap \pa \ft C_2= \emptyset$, we are not able  to explicit  assumptions  on $f$ which imply \textbf{[H2]} except in the symmetric situation described 
in
Theorem~\ref{pr.main}. 
Note in particular that
%Notice that when $\pa \ft C_1\cap \pa \ft C_2= \emptyset$ and  \textbf{[H2]} holds, one has  when $h\to 0$ (see indeed~\eqref{eq.l1=l2} below): 
%$$\lambda_1(h)=\lambda_2(h)\, \big (1+O\big (e^{-\frac ch}\big )\big ).$$
%Finally, notice 
when $\pa \ft C_1\cap \pa \ft C_2= \emptyset$ and  \textbf{[H2]} holds, one has  when $h\to 0$:  $\alpha_1(h)=\alpha_2(h)\, \big (1+O\big (e^{-\frac ch}\big )\big )$ (which follows from \textbf{[H2]},~\eqref{alpha1} and the fact that $\ve(h)=O\big (e^{-\frac ch}\big )$, see~\eqref{veh}) and thus:
\begin{equation}
\label{eq3}
\kappa_{1,k}=\kappa_{2,k}\  \text{ for all } k\in \mathbb N.
\end{equation}
Moreover, it also holds in this case
$\lambda_{1}(h)=\lambda_2(h)\big (1+ \mathcal O\big(e^{-\frac ch})\big)$
(see \eqref{eq.l1=l2}).
\end{remark}

\begin{remark}
\label{re.unstable}
The assumption \textbf{[H2]} is non generic, that is unstable with respect  to perturbations of the potential~$f$ in the following sense. For any $f$ satisfying  \textbf{[H2]}, it follows from \eqref{eq1}--\eqref{eq3} that there exists an arbitrary small perturbation $\delta f:\overline \Omega \to \mathbb R$ such that $f+\delta f$ satisfies \textbf{[H1]}. Then, according to Theorem~\ref{th.main},   the quasi-stationary distribution
for the potential $f+\delta f$
  concentrates when $h\to 0$  in precisely one of the wells $\ft C_1$ or $\ft C_2$.
\end{remark}

\noindent
The following result shows that when  \textbf{[H2]} is satisfied, the quasi-stationary distribution~$\nu_h$ concentrates when $h\to 0$  in the two wells~$\ft C_1$ and~$\ft C_2$. 

\begin{theorem}
\label{th.main2} Let us assume that the hypotheses \textbf{[H-Well]} and \textbf{[H2]} are satisfied.  Let $\nu_h$ be the quasi-stationary distribution of the process~\eqref{eq.langevin} on $\Omega$ (see~\eqref{nuh}). 
 Let  $\ft O_{1}\subset \Omega$ be an open neighborhood  of $x_{1}$ and $\ft O_{2}\subset \Omega$   be an open neighborhood  of $x_{2}$ such that $\ft O_1\cap\ft O_2=\emptyset$. Then,  there exists $c>0$ such that    in the limit $h\to0$: 
\begin{equation}\label{eq.QSD-1-h2}
\nu_h(\ft O_{1})+  \nu_h(\ft O_{2})=1
+\mathcal O\big (e^{-\frac ch}\big ),
\end{equation}
where, for $k\in\{1,2\}$,  
\begin{equation}\label{eq.QSD-2-h2}
\nu_h(\ft O_{k})
=b_{k}+\mathcal O\left(\frac{|\alpha_2(h)-\alpha_1(h)|}{|\varepsilon(h)|}\right)  +     \mathcal O\big(h \big),
\end{equation}
where, defining $q$ by $\{q\}=\{1,2\}\setminus\{k\}$,
\begin{equation}
\label{eq.bi}
\displaystyle b_{k}=\frac{\displaystyle \big(\det \Hess f(x_{q})\big)^{\frac14}} {\displaystyle\big(\det \Hess f(x_1)\big)^{\frac14}+\big(\det \Hess f(x_2)\big)^{\frac14}}.
\end{equation}
Moreover, for any $F\in L^{\infty}(\partial \Omega,\mathbb R)$ and for any family $(\Sigma_{i,j})_{(i,j)\in\bigcup_{p=1}^2 \{p\}\times \{1,\dots, \ft n_{p}\}}$ of disjoint open neighborhoods of $(z_{i,j})_{(i,j)\in\bigcup_{p=1}^2 \{p\}\times \{1,\dots, \ft n_{p}\}}$ in $\pa\Omega$,
there exists $c>0$ such that in the limit  $h\to 0$:
\begin{equation}\label{eq.h2r}
\mathbb E^{\nu_h} \left [ F\left (X_{\tau_{\Omega}} \right )\right]=\sum \limits_{(i,j)\in\bigcup_{p=1}^2 \{p\}\times \{1,\dots, \ft n_{p}\}}
\mathbb E^{\nu_h} \left [ \mathbf{1}_{\Sigma_{i,j}}F\left (X_{\tau_{\Omega}} \right )\right]  +\mathcal O\big (e^{-\frac ch}\big).
 \end{equation}
Lastly, when, for some $(i,j)\in\bigcup_{p=1}^2 \{p\}\times \{1,\dots, \ft n_{p}\}$, $F$ is $C^{\infty}$ around $z_{i,j}$, one has when $h\to 0$:
\begin{equation}\label{eq.h2r-b}
 \mathbb E^{\nu_h} \left [ \mathbf{1}_{\Sigma_{i,j}}F\left (X_{\tau_{\Omega}} \right )\right]=F(z_{i,j})\,a_{i,j}\,b_{i} +
\mathcal O\left (\frac{|\alpha_2(h)-\alpha_1(h)|}{|\varepsilon(h)|}\right)+
\mathcal O\big(h\big),
\end{equation}
 where $b_{i}$ is defined in \eqref{eq.bi} and $a_{i,j}$ is defined in
 \eqref{eq.ai}.
 
 \end{theorem}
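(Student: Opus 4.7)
The plan is to combine Proposition~\ref{pr1} with Laplace-method asymptotics for the quasi-modes $\varphi_1,\varphi_2$ in order to reduce the analysis of $\nu_h$ and of the exit distribution to an explicit two-dimensional computation. First, Lemma~\ref{ran1} and the non-degeneracy of $\lambda_1(h)$ give $u_h = c_1(h)\varphi_1+c_2(h)\varphi_2$, where $(c_1(h),c_2(h))$ is the unit eigenvector of the matrix $L$ in~\eqref{L} associated with $\lambda_1(h)$, its global sign being fixed by~\eqref{uh.norma}. Diagonalizing the symmetric $2\times 2$ matrix in~\eqref{L} and using \textbf{[H2]} yields
$$c_k(h)^2=\tfrac12+\mathcal O\!\left(\tfrac{|\alpha_2(h)-\alpha_1(h)|}{|\varepsilon(h)|}\right),\qquad k\in\{1,2\},$$
together with an explicit sharp asymptotic for $\lambda_1(h)$ that will enter the exit-point computation in Step~3.

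Second, the quasi-modes $\varphi_i$ constructed in the proof of Proposition~\ref{pr1} (along the lines of~\cite{DLLN-saddle1,HeNi1}) are $L^2(e^{-2f/h}dx)$-normalized and concentrate exponentially near $x_i$; Laplace's method then gives
$$A_i(h):=\int_\Omega\varphi_i\,e^{-\frac 2h f}\,dx=\frac{(\pi h)^{d/4}}{(\det\Hess f(x_i))^{1/4}}\,e^{-f(x_i)/h}\bigl(1+\mathcal O(h)\bigr),$$
together with $A_i^{\ft O_i}(h):=\int_{\ft O_i}\varphi_i\,e^{-\frac 2h f}dx=A_i(h)(1+\mathcal O(e^{-c/h}))$, while $\int_{\ft O_k}\varphi_i\,e^{-\frac 2h f}dx=\mathcal O(e^{-c/h})A_i(h)$ for $k\neq i$. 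Using $f(x_1)=f(x_2)$ together with Step~1 in
$$\nu_h(\ft O_k)=\frac{\int_{\ft O_k}(c_1\varphi_1+c_2\varphi_2)\,e^{-\frac 2h f}dx}{\int_\Omega(c_1\varphi_1+c_2\varphi_2)\,e^{-\frac 2h f}dx}$$
then yields, after simplification by $(\det\Hess f(x_1)\det\Hess f(x_2))^{1/4}$, the claim~\eqref{eq.QSD-2-h2} with $b_k$ as in~\eqref{eq.bi}, and \eqref{eq.QSD-1-h2} follows by summation.

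Third, for the exit point statistics I insert the decomposition $u_h=c_1\varphi_1+c_2\varphi_2$ into formula~\eqref{eq.loi-bord}. The boundary fluxes $\int_{\Sigma_{i,j}}F\,\partial_n\varphi_p\,e^{-\frac 2h f}$ are estimated in~\cite{DLLN-saddle1} (they also enter the computation of $\kappa_{p,0}$ in Proposition~\ref{pr1}): they are exponentially small unless $z_{i,j}\in\partial\ft C_p\cap\partial\Omega$, and otherwise their leading Laplace constant is proportional to $\partial_n f(z_{i,j})/\sqrt{\det\Hess f|_{\partial\Omega}(z_{i,j})}$ times the same Gaussian prefactor $(\det\Hess f(x_p))^{-1/4}$ that governs $A_p(h)$. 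Combining these estimates with Steps~1--2, the factor $\lambda_1(h)$ in the denominator of~\eqref{eq.loi-bord} cancels the sum $\sum_k \partial_n f(z_{i,k})/\sqrt{\det\Hess f|_{\partial\Omega}(z_{i,k})}$ appearing in $\kappa_{i,0}$, producing the ratio~$a_{i,j}$ from~\eqref{eq.ai}, while the well-weight structure of $(c_p,A_p)$ produces the factor~$b_i$ from~\eqref{eq.bi}. This gives~\eqref{eq.h2r-b}, and \eqref{eq.h2r} follows from the exponential localization of the boundary flux away from the $z_{i,j}$.

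The main obstacle lies in Step~3: $\lambda_1(h)$, $c_k(h)$, $A_k(h)$, and the boundary fluxes each carry explicit constants involving $\Hess f$ at the wells and at the $z_{i,j}$, and one must verify that they assemble precisely into the product $a_{i,j}b_i$. Since the quasi-modes of Proposition~\ref{pr1} are built to first order as in~\cite{DLLN-saddle1}, the boundary flux estimates of that reference adapt to the present degenerate-barrier situation, but the bookkeeping of constants has to be carried out carefully and in a way consistent with the sign conventions of $\varphi_1,\varphi_2$ inherited from Proposition~\ref{pr1} and from the positivity of $u_h$.
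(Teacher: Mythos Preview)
Your outline follows the same strategy as the paper: write $u_h=c_1\varphi_1+c_2\varphi_2$, diagonalise the $2\times2$ matrix $L$ under \textbf{[H2]} to get $c_k^2=\tfrac12+\mathcal O(|\alpha_2-\alpha_1|/|\varepsilon|)$, apply Laplace's method to the integrals of $\varphi_i\,e^{-2f/h}$, and feed everything into \eqref{eq.loi-bord}. This is exactly what the paper does (see \eqref{uh-debut2}, \eqref{uh-fin2}, Proposition~\ref{pr.proj'}, and the end of Section~2.3).

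Two places where your sketch is looser than the argument actually requires:

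\emph{Sign determination.} Knowing $c_k^2\approx\tfrac12$ is not enough: depending on the sign of $\varepsilon(h)$ the eigenvector could be close to $(\tfrac1{\sqrt2},-\tfrac1{\sqrt2})$, in which case the denominator $c_1A_1+c_2A_2$ would be small and the formula for $\nu_h(\mathsf O_k)$ would break down. The paper uses the \emph{pointwise} positivity $u_h>0$ together with $\widetilde u_k\ge0$ to get $\langle u_h,\widetilde u_k\rangle>0$ for both $k$, whence $c_1,c_2>0$ (equivalently $\varepsilon(h)<0$); see the derivation of \eqref{beta-h2-b}. You flag this in your last paragraph, but it is a genuine step, not just a choice of global sign.

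\emph{Boundary fluxes.} In Step~3 you speak of estimating $\int_{\Sigma_{i,j}}F\,\partial_n\varphi_p\,e^{-2f/h}$ directly. This is the one real gap: $\varphi_p$ is the spectral projection of a function compactly supported in $\Omega$, so its normal derivative on $\partial\Omega$ is produced entirely by $\pi_h^{(0)}$ and is not amenable to a direct Laplace estimate. The paper never touches $\partial_n\varphi_p$. Instead it uses the commutation $\nabla\pi_h^{(0)}=\pi_h^{(1)}\nabla$ to place $\nabla u_h$ in $\mathrm{Ran}\,\pi_h^{(1)}$, expands $\nabla u_h$ in the orthonormal $1$-form basis $(\psi_{i_j})$ built from local Witten Laplacians near the $z_{i,j}$, and combines the boundary estimates for the $\psi_{i_j}$'s (Proposition~\ref{co.bdy-estim}) with the inner products $\langle\nabla u_h,\psi_{i_j}\rangle$ (Proposition~\ref{lemm2'}). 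This $1$-form decomposition is the actual mechanism behind the ``boundary flux estimates of \cite{DLLN-saddle1}'' you invoke, and it is what makes the constants assemble into $a_{i,j}b_i$; it should be made explicit rather than left as bookkeeping.
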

 
 \begin{remark}
When  \textbf{[H-Well]} and \textbf{[H2]} are satisfied, one also gives sharp asymptotic estimates on the two smallest eigenvalues $0<\lambda_1(h)<\lambda_2(h)$ of $L_{f,h}^{D,(0)}$ when $h\to 0$, see indeed \eqref{eq.l1=l2},~\eqref{lh1-h2-c1} and~\eqref{lh1-h2-c2}.
 \end{remark}

\noindent 
When \textbf{[H-Well]}    and \textbf{[H1]} hold, Theorem~\ref{th.main2} implies  that the quasi-stationary distribution~$\nu_h$   concentrates  
when $h\to 0$ in   $\ft C_1$ and~$\ft C_2$,  and more precisely around any arbitrary small neighborhood of $x_{1}$ and $x_2$.  Note also that when $\partial  \ft C_{1}\cap \partial \ft  C_{2}\neq \emptyset$,  the coefficient \eqref{eq.bi}   
specifying the repartition of $\nu_{h}$ within the wells equals $\frac12$
according to \eqref{eq2}.
 Moreover, when $X_0\sim \nu_h$ 
 the law of $X_{\tau_\Omega}$ concentrates  when $h\to 0$ on 
 $\{z_{1,1},\dots,z_{1,\ft n_{1}}\}\cup \{z_{2,1},\dots,z_{2,\ft n_{2}}\}=(\pa\ft  C_{1}\cup \pa\ft  C_{2})\cap\pa\Omega$ 
with an  explicit repartition given by~\eqref{eq.ai}. 
In addition, when   $\vert \nabla f\vert \neq 0$ on $\pa \ft C_1 \cup \pa \ft C_2$, it follows from
 \cite[Theorem~2]{di-gesu-lelievre-le-peutrec-nectoux-Schuss}
 that  when $X_0=x\in \ft C_k$,  $k\in \{1,2\}$, the 
law of $X_{\tau_\Omega}$ concentrates  when $h\to 0$ on 
 $\{z_{k,1},\dots,z_{k,\ft n_{k}}\}=\pa\ft  C_{k}\cap\pa\Omega$ with the repartition
 given by~\eqref{eq.ai}. This shows that in this case the domain $\Omega$ is not metastable for deterministic initial conditions within $ \ft C_1 \cup  \ft C_2$.\medskip

\noindent
 Connecting again with the literature dealing with semiclassical Schr\"odinger
operators of the form $h^2\Delta_H^{(0)}+V$ on manifolds without boundary, when  the assumptions \textbf{[H-Well]} and \textbf{[H2]} are satisfied,  a strong tunneling effect appears when $h\to 0$ and mixes the respective properties of both wells.  
%This  is related to
%the  relation
%\begin{align*}
%\lambda_{2}(h)-\lambda_{1}(h)
%%&=\frac12\sqrt{\big  ( \alpha_2(h) -\alpha_1(h)\big )^2+ 4\ve(h)^2     } \ h^{-\frac 12}  \ e^{-\frac 2h (\min_{\pa \Omega}f-\min_{\overline\Omega}f)}
%%\\
%&= 
%\mathcal O\big(|\varepsilon(h)|\, \lambda_{1}(h)\big)\,,
%\end{align*}
%satisfied for every $h$ small enough (see indeed \eqref{eq.l1=l2}
%and \eqref{lh1-h2-c2}),
%which makes  $\lambda_{1}(h)$ and $\lambda_{2}(h)$ essentially
%indistinguishable.  
We refer  to \cite[pp.~45--46]{He2} for  a symmetric case  with two wells
and to
 \cite{HelSjII} for more general symmetric situations.\medskip

\noindent
Let us conclude this section by specifying the statement of
Theorem~\ref{th.main2} in a completely symmetric situation. To this end, we recall that an isometry 
$ \Phi:\overline\Omega\to\overline\Omega$ is a $C^\infty$ diffeomorphism   which satisfies, for all  $x\in \overline \Omega$ and all $v,w\in T_x \overline \Omega$: $v\cdot w=D\Phi_x(v)\cdot D\Phi_x(w)$, where $\cdot$ is the scalar product associated with the metric of $\overline\Omega$ on the tangent bundle $T\overline\Omega$. One says moreover that $f:\overline \Omega\to \overline\Omega$ is  even if there exists an isometry $\Phi$ such that
\begin{equation}\label{eq.sym}
\Phi(x_{1})=x_{2},  \ \Phi ^2=I, \  \text{ and } \ f\circ \Phi=f,
\end{equation} 
where $I$ is the identity map on $\overline \Omega$. 
When $f$ is even, the following improvement of Theorem~\ref{th.main2} holds.

\begin{theorem}
\label{pr.main}
Let us assume that the hypothesis  \textbf{[H-Well]} is  satisfied.  Let $\nu_h$ be the quasi-stationary distribution of the process~$(X_t)_{t\ge 0}$ on $\Omega$ (see~\eqref{nuh}). 
Assume that $f$ is an even function as defined by~\eqref{eq.sym}. Then,  the assumption \textbf{[H2]} is satisfied with in particular, for all $h$ small enough: $$\alpha_1(h)=\alpha_2(h), \text{ where } \alpha_1(h)  \text{ and } \alpha_2(h)  \text{ are defined by~\eqref{L}}.$$
Furthermore, let $\ft  O_{1}\subset \Omega$ be an open neighborhood  of $x_{1}$ and $\ft O_{2}\subset \Omega$   be an open neighborhood  of $x_{2}$ such that $\ft O_1\cap \ft O_2=\emptyset$. Then, for $k\in\{1,2\}$, there exists $c>0$ such that  in the limit $h\to0$: 
$$
\nu_h(\ft O_{k})=\frac12+\mathcal 
O\big(e^{-\frac ch}\big).
$$
Moreover, one has $\ft n_{1}=\ft n_{2}$ (see~\eqref{eq.ni}) and the asymptotic estimates~\eqref{eq.h2r} and~\eqref{eq.h2r-b}. 
  \end{theorem}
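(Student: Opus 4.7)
The approach is to exploit the symmetry $\Phi$ as a unitary involution on the relevant spectral subspace, to transport this symmetry to the basis of Proposition~\ref{pr1}, and then down to $\nu_{h}$ and the exit distribution.

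First I introduce the pullback operator $U_{\Phi}: u\mapsto u\circ\Phi$ on $L^{2}(\Omega, e^{-\frac{2}{h}f}dx)$. Since $\Phi$ is an isometry of $\overline\Omega$ with $\Phi^{2}=I$, $\Phi(\pa\Omega)=\pa\Omega$, and $f\circ\Phi=f$, the operator $U_{\Phi}$ is a unitary involution on this Hilbert space, preserves the Dirichlet domain $D(L_{f,h}^{D,(0)})$, and commutes with $L_{f,h}^{D,(0)}$ (the Hodge Laplacian is natural under isometries and $\nabla f\cdot\nabla$ is preserved thanks to $f\circ\Phi=f$). Hence $U_{\Phi}$ leaves the two-dimensional spectral subspace $E:=\Ran\pi_{[0,\frac{\sqrt h}2)}(L_{f,h}^{D,(0)})$ from Lemma~\ref{ran1} invariant. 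Moreover, $\Phi$ permutes the set $\{f<\min_{\pa\Omega}f\}$ and exchanges $x_{1}$ and $x_{2}$, hence exchanges the wells $\ft C_{1}$ and $\ft C_{2}$; restricted to $\pa\Omega$, $\Phi$ thus yields a bijection between $\pa\ft C_{1}\cap\pa\Omega$ and $\pa\ft C_{2}\cap\pa\Omega$, which in particular proves $\ft n_{1}=\ft n_{2}$.

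Next I show $\alpha_{1}(h)=\alpha_{2}(h)$. The projectors $\Pi_{\pm}:=\frac12(I\pm U_{\Phi})$ commute with $L_{f,h}^{D,(0)}|_{E}$, so $E$ splits as $E_{+}\oplus E_{-}$. The cases $E=E_{+}$ or $E=E_{-}$ are excluded, since they would force every vector of $E$ to be (anti-)invariant under $\Phi$, incompatible with the single-well localization of the quasi-modes in Proposition~\ref{pr1}. Writing $\psi_{\pm}$ for a unit vector of $E_{\pm}$ and setting
\[\varphi_{1}:=\tfrac{1}{\sqrt2}(\psi_{+}+\psi_{-}), \qquad \varphi_{2}:=U_{\Phi}\varphi_{1}=\tfrac{1}{\sqrt2}(\psi_{+}-\psi_{-}),\]
one obtains an orthonormal basis of $E$ to which the construction of Proposition~\ref{pr1} can be adapted. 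By unitarity of $U_{\Phi}$ and its commutation with $L_{f,h}^{D,(0)}$,
\[\alpha_{1}(h)=\langle L_{f,h}^{D,(0)}\varphi_{1},\varphi_{1}\rangle =\langle L_{f,h}^{D,(0)}U_{\Phi}\varphi_{1},U_{\Phi}\varphi_{1}\rangle =\alpha_{2}(h).\]
Since $\lambda_{1}(h)<\lambda_{2}(h)$ strictly (Remark~\ref{re.mu}), the matrix in \eqref{L} cannot be scalar, so $\varepsilon(h)\neq 0$ and assumption \textbf{[H2]} holds with $\alpha_{1}-\alpha_{2}\equiv 0$.

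Finally I transport the symmetry down to $\nu_{h}$ in order to reach the exponentially small remainder. The ground state $u_{h}>0$ satisfies that $U_{\Phi}u_{h}$ is again a positive eigenfunction for the simple eigenvalue $\lambda_{1}(h)$, so by the normalization \eqref{uh.norma} one has $u_{h}\circ\Phi=u_{h}$. Combined with $f\circ\Phi=f$ and the fact that $\Phi$ is a volume-preserving diffeomorphism, formula \eqref{nuh} yields $\nu_{h}\circ\Phi^{-1}=\nu_{h}$. Pick $r>0$ so small that $B(x_{1},r)\subset\ft O_{1}$ and $B(x_{2},r):=\Phi(B(x_{1},r))\subset\ft O_{2}$. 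Then $\nu_{h}(B(x_{1},r))=\nu_{h}(B(x_{2},r))$, and standard Agmon-type estimates on $u_{h}$ (essentially present in \cite{DLLN-saddle1}) give $\nu_{h}\big(\Omega\setminus(B(x_{1},r)\cup B(x_{2},r))\big)=\mathcal O(e^{-c/h})$, so each $\nu_{h}(B(x_{i},r))=\frac12+\mathcal O(e^{-c/h})$. The two-sided sandwich
\[\nu_{h}(B(x_{k},r))\,\le\,\nu_{h}(\ft O_{k})\,\le\,1-\nu_{h}(B(x_{q},r)),\qquad \{q\}=\{1,2\}\setminus\{k\},\]
yields $\nu_{h}(\ft O_{k})=\frac12+\mathcal O(e^{-c/h})$. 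The estimates \eqref{eq.h2r} and \eqref{eq.h2r-b} then follow directly from Theorem~\ref{th.main2} applied under \textbf{[H2]}, with $b_{i}=\frac12$ here by \eqref{eq.bi}, since $\det\Hess f(x_{1})=\det\Hess f(x_{2})$ (from $\Hess f(x_{2})=(D\Phi_{x_{1}})^{T}\Hess f(x_{1})D\Phi_{x_{1}}$ with $D\Phi_{x_{1}}$ orthogonal).

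The main obstacle I expect is to verify that the symmetric basis $(\varphi_{1},U_{\Phi}\varphi_{1})$ built above genuinely meets all the quantitative conclusions of Proposition~\ref{pr1}; this reduces to checking that the quasi-mode construction of \cite{DLLN-saddle1} can be performed equivariantly under $\Phi$. A secondary delicate point is the upgrade of the $\mathcal O(h)$ remainder of Theorem~\ref{th.main2} to an exponentially small one, which is precisely what the exact symmetry $\nu_{h}\circ\Phi^{-1}=\nu_{h}$ delivers via the sandwich above.
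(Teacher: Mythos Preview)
Your strategy is correct and shares the paper's key ingredients: the commutation of $U_{\Phi}$ with $L_{f,h}^{D,(0)}$, the identity $u_{h}\circ\Phi=u_{h}$ from simplicity and positivity (hence also $u_{2,h}\circ\Phi=-u_{2,h}$), the deduction $\ft n_{1}=\ft n_{2}$, and $\varepsilon(h)\neq 0$ from $\lambda_{1}(h)<\lambda_{2}(h)$. The gap you flag is real, and the paper closes it more simply than by verifying Proposition~\ref{pr1} for an abstract basis built from $E_{\pm}$: one just takes $\chi_{2}:=\chi_{1}\circ\Phi$ in \eqref{eq.chi1}, so that $\widetilde u_{2}=U_{\Phi}\widetilde u_{1}$ and $\pi_h^{(0)}\widetilde u_{2}=U_{\Phi}\pi_h^{(0)}\widetilde u_{1}$. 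Then $\pi_h^{(0)}\widetilde u_{1}\pm\pi_h^{(0)}\widetilde u_{2}$ lie in $\sspan(u_{h})$ and $\sspan(u_{2,h})$ respectively, which forces both the Gram matrix $G_{0}$ of \eqref{eq.G0} and the matrix $\big(\langle L_{f,h}^{D,(0)}\pi_h^{(0)}\widetilde u_{k},\pi_h^{(0)}\widetilde u_{q}\rangle\big)_{k,q}$ to have equal diagonal entries; by \eqref{eq.L} the same holds for $L$, i.e.\ $\alpha_{1}(h)=\alpha_{2}(h)$ for the $\alpha$'s actually defined in \eqref{L}. Note that, as written, your detour through $E_{\pm}$ only shows the diagonal entries of $L$ coincide in \emph{some} orthonormal basis, which is automatic for any real symmetric $2\times 2$ matrix and carries no information.

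For $\nu_{h}(\ft O_{k})=\tfrac12+\mathcal O(e^{-c/h})$, your sandwich argument via $\nu_{h}\circ\Phi^{-1}=\nu_{h}$ is a valid and slightly shorter alternative. The paper instead feeds $\alpha_{1}=\alpha_{2}$ (so $\mu(h)=0$ in \eqref{beta-h2-b}) and $\widetilde u_{1}\circ\Phi=\widetilde u_{2}$ into the general \textbf{[H2]} expansion \eqref{uh-fin2} and Proposition~\ref{pr.proj'}, obtaining $\int_{\ft O_{i}}u_{h}\,e^{-2f/h}=\tfrac{1}{\sqrt2}\big(1+\mathcal O(e^{-c/h})\big)\int_{\ft O_{i}}\widetilde u_{i}\,e^{-2f/h}$, and compares the two wells via the change of variables $x\mapsto\Phi(x)$. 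The concentration estimate you invoke, $\nu_{h}\big(\Omega\setminus(B(x_{1},r)\cup B(x_{2},r))\big)=\mathcal O(e^{-c/h})$, is precisely \eqref{eq.QSD-1-h2} in this framework, so no separate Agmon argument is needed.
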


   \section{Proof of our main results}
   \label{sec.2}
 %  This section is dedicated to the proof of Theorem~\ref{th.main},  Theorem~\ref{th.main2}  and Proposition \ref{pr.main}. 

    \subsection{Proof of Proposition~\ref{pr1}}
   \label{sec.21}
 %  In this section, one proves Proposition~\ref{pr1}.
   %%%%%%%
   
     \subsubsection{The operator $L_{f,h}^{D,(1)}$}
         \label{sec.nota-sobo}
 For $p\in\{0,\dots,d\}$, one denotes by
$\Lambda^pC^{\infty}(\overline{\Omega})$ the space of $C^{\infty}$ $p$-forms on
$\overline\Omega$ and by  $\Lambda^pC^{\infty}_T(\overline\Omega)$ the subset of
$\Lambda^pC^{\infty}(\overline{\Omega})$ made of the  $p$-forms $v$ such that $\mbf tv=0$ on $\partial \Omega$, where $\mathbf t$ denotes the tangential trace on forms. We recall that $\mbf tv=0$ on $\partial \Omega$
means that the restriction  to $\pa\Omega$ of the  $p$-form $v$ vanishes  when applied to tangential vector fields, and
we refer e.g. to 
 \cite[Equation (2.25)]{GSchw} for a rigorous definition of the tangential trace. 
For  $q\in \mathbb N$, one denotes by
$\Lambda^pH^q_w(\Omega)$ the weighted Sobolev spaces of $p$-forms with
regularity index~$q$, for the weight $e^{-\frac{2}{h} f} $ on
$\Omega$ (where the subscript $w$  refers to the fact that the weight function
appears in the inner product), and we refer again to \cite{GSchw} for an introduction to weighted Sobolev spaces on manifolds with boundaries. The set $\Lambda^pH^1_{w,T}(\Omega)$ is then defined by 
$$\Lambda^pH^1_{w,T}(\Omega):= \left\{v\in \Lambda^pH^1_w(\Omega)
  \,,\   \mathbf  tv=0 \ {\rm on} \ \partial \Omega\right\}.$$
We will denote by $\Vert . \Vert_{H^q_w}$  the norm on the weighted space $\Lambda^pH^q_w
(\Omega)$ and by $\langle
\cdot , \cdot\rangle_{L^2_w}$ the scalar product on $\Lambda^pL^2_w (\Omega)$. Notice that $\Lambda^0L^2_w (\Omega)$ is the space   
  $L^2(\Omega, e^{-\frac 2h f(x)}dx)$  and $\Lambda^0H^2_w  (\Omega)$ is the space   
  $H^2(\Omega,e^{-\frac 2h f(x)}dx )$ introduced in the definition of  $L_{f,h}^{D,(0)}$ in Section~\ref{sec:main-results}.\medskip
   
\noindent
   In the following, one denotes respectively  by $d: \Lambda^{p} \,C^{\infty}(\overline \Omega)\to \Lambda^{p+1} \,C^{\infty}(\overline\Omega)$  and $d^*:\Lambda^{p+1} \,C^{\infty}(\overline\Omega)\to \Lambda^{p} \,C^{\infty}(\overline\Omega)$ the exterior and the co-differential derivatives  on $\overline\Omega$. 
     Let us introduce  the differential operator
   $$L^{(1)}_{f,h}\ =\ \frac{h}{2} \Delta^{(1)}_H + \mathcal L_{\nabla f}
   \ =\ \frac{1}{2h}e^{\frac fh}\big(h^{2}\Delta^{(1)}_H +|\nabla f|^{2} +h (\mathcal L_{\nabla f} +\mathcal L^{*}_{\nabla f}) \big)e^{-\frac fh}$$
  acting on $\Lambda^1C^{\infty}(\overline{\Omega})$, where $\Delta^{(1)}_H=(d+d^*)^2$ is the Hodge Laplacian on $\overline\Omega$,  $\mathcal L_{\nabla f}$ is the Lie derivative with respect to the vector field $\nabla f$, and 
 $\mathcal L^{*}_{\nabla f}$ its formal adjoint in $L^{2}(\Omega)$.
The (tangential) Dirichlet realization of $L^{(1)}_{f,h}$
is denoted by
 $L^{D,(1)}_{f,h}$ and its domain is
 $$D\big (L^{D,(1)}_{f,h}\big)= \big \{ v \in \Lambda^1 H^2(\Omega), \ \mbf{t}v=0 \text{ and }   \mbf{t} d^* e^{-\frac 2h f}v =0 \text{ on }   \pa \Omega\big \}.$$
From \cite[Section~2.4]{HeNi1}, the operator $L^{D,(1)}_{f,h}$
 is self-adjoint, positive and has compact resolvent. 
 One has moreover the following result from \cite[Theorem 3.2.3]{HeNi1}. 
\begin{lemma} \label{ran1b}
Under the assumption \textbf{[H-Well]}, there exists $h_0>0$ such that for all $h\in (0,h_0)$,
$$
\dim \Ran \, \pi_{[0,\frac{\sqrt h}2)}\left (L^{D,(1)}_{f,h}\right )= \ft m_1^{\overline \Omega},$$
where $ \ft m_1^{\overline \Omega}$ is defined in \eqref{eq.m1} and $\pi_{[0,\frac{\sqrt h}2)}\left (L^{D,(1)}_{f,h}\right )$ is the orthogonal projector on the vector space  associated with the eigenvalues of $L^{D,(1)}_{f,h}$ in $[0,\frac{\sqrt h}2)$. 
\end{lemma}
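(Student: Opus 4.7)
The plan is to follow the strategy of Helffer--Nier from \cite{HeNi1}, based on harmonic approximation near each generalized saddle point combined with an IMS-type localization formula. First, I would use the conjugation $L^{(1)}_{f,h} = \frac{1}{2h}\, e^{\frac{f}{h}}\, \Delta^{(1)}_{f,h}\, e^{-\frac{f}{h}}$, where $\Delta^{(1)}_{f,h} = (d_{f,h} + d_{f,h}^{*})^{2}$ is the usual Witten Laplacian on $1$-forms and $d_{f,h} = h\, e^{-\frac{f}{h}} d\, e^{\frac{f}{h}}$, to reduce the problem to counting the eigenvalues of $\Delta^{(1)}_{f,h}$ (with tangential Dirichlet boundary conditions inherited from the domain of $L^{D,(1)}_{f,h}$) that lie in an interval of order $h^{3/2}$. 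This conjugation transports the weighted $L^{2}_{w}$ structure to the standard $L^{2}$ structure and the two counting problems become equivalent.

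For the lower bound $\dim \Ran \pi_{[0,\sqrt{h}/2)}(L^{D,(1)}_{f,h}) \geq \ft m^{\overline\Omega}_{1}$, I would construct a family of approximately orthonormal quasimodes $(\psi_{z})_{z \in \ft U^{\overline\Omega}_{1}}$, each localized near $z$ via a cutoff supported in a small ball (or half-ball) around $z$. For an interior saddle point $z \in \ft U^{\Omega}_{1}$, the local model is the full-space harmonic Witten Laplacian associated with $\Hess f(z)$, whose kernel on $1$-forms is one-dimensional and spanned by a Gaussian $1$-form aligned with the unstable direction of $\Hess f(z)$. For a boundary point $z \in \ft U^{\pa\Omega}_{1}$, the model is a half-space with tangential Dirichlet conditions, built from $\Hess(f|_{\pa\Omega})(z)$ in the tangential directions and from the positive slope $\pa_{n}f(z) > 0$ in the normal direction; its kernel is again one-dimensional. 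Truncating these model solutions yields quasimodes $\psi_{z} \in D(L^{D,(1)}_{f,h})$ with $\|\psi_{z}\|_{L^{2}_{w}} = 1$ and $\|L^{D,(1)}_{f,h} \psi_{z}\|_{L^{2}_{w}} = O(e^{-c/h})$. Disjointness of their supports gives exponentially small overlaps, so the min-max principle provides $\ft m^{\overline\Omega}_{1}$ eigenvalues well below $\sqrt{h}/2$.

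For the matching upper bound, I would use a quadratic partition of unity $\sum_{j} \chi_{j}^{2} = 1$ with cutoffs $\chi_{z}$ localizing near each $z \in \ft U^{\overline\Omega}_{1}$ and a complementary $\chi_{0}$. The IMS formula reads
\begin{equation*}
\Delta^{(1)}_{f,h} = \sum_{z} \chi_{z}\, \Delta^{(1)}_{f,h}\, \chi_{z} + \chi_{0}\, \Delta^{(1)}_{f,h}\, \chi_{0} - h^{2} \sum_{j} |\nabla \chi_{j}|^{2}.
\end{equation*}
On $\supp \chi_{0}$, one has a uniform lower bound $|\nabla f|^{2} \geq c > 0$ together with standard semiclassical control of the first-order terms, which yields $\chi_{0}\, \Delta^{(1)}_{f,h}\, \chi_{0} \geq c h$. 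Near each saddle point, harmonic approximation (with appropriate boundary model for $z \in \ft U^{\pa\Omega}_{1}$) shows that the local piece contributes exactly one eigenvalue of order $O(e^{-c/h})$ on top of a spectral gap of size at least $c h$. A min-max comparison then rules out any additional eigenvalue below $\sqrt{h}/2$, giving the exact count.

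The main obstacle is the careful treatment of the boundary generalized saddle points $z \in \ft U^{\pa\Omega}_{1}$: the coupling of tangential Dirichlet conditions on $1$-forms with the additional condition $\mbf t\, d^{*}(e^{-\frac{2}{h}f}\cdot) = 0$ produces a non-standard self-adjoint realization, and the model problem must be set on a half-space with mixed boundary conditions that account both for the normal slope $\pa_{n}f(z)>0$ and for the Morse Hessian of $f|_{\pa\Omega}$ at $z$. The spectral analysis of this half-space model, together with the verification that its ground state contributes exactly one small eigenvalue to $\Delta^{(1)}_{f,h}$, is where the bookkeeping becomes delicate; once this is in place, as carried out in \cite[Section 3]{HeNi1}, the quasimode construction and the IMS argument match and yield the equality claimed in the lemma.
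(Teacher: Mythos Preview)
Your sketch is correct and follows precisely the Helffer--Nier strategy from \cite{HeNi1}, but note that the paper does not give its own proof of this lemma: it is simply stated as a direct application of \cite[Theorem~3.2.3]{HeNi1}. Your outline thus agrees with the cited reference, which is all the paper invokes.
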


\noindent
In the following, the   exterior differential $d$ will be denoted, with a slight abuse of notation,  by  $\nabla$. For ease of notation, one  also denotes, for $p\in \{0,1\}$,
$$\pi_h^{(p)}=\pi_{[0,\frac{\sqrt h}2)}\left (L^{D,(p)}_{f,h}\right ).$$
From \cite[Corollary~2.4.4]{HeNi1}, the following relation holds on $\Lambda^{0} \,H^{1}_{w,T}(\Omega)$:
\begin{equation}\label{commute2}
 \nabla \pi_h^{(0)} =  \pi_h^{(1)}\nabla\,.
\end{equation}
This  implies in particular that
\begin{equation}\label{nabla}
\nabla: \Ran \,  \pi_h^{(0)}\to \Ran \,  \pi_h^{(1)}
\end{equation}
and then, when \textbf{[H-Well]} holds, according to Lemma~\ref{ran1},  that
 for every   $h$ small enough,
\begin{equation}\label{uhin}
\nabla u_h  \in \Ran \,  \pi_h^{(1)}.
\end{equation}
We refer to \cite[Section~3.1.2]{DLLN-saddle1}
for more details concerning this section.

%%%%%%%
   \subsubsection{Proof of Proposition~\ref{pr1}}
   \label{sub.proof-pr1}
In the following, we assume that 
\textbf{[H-Well]} holds.\medskip

\noindent
The finite dimensional vector spaces $\Ran\, \pi_h^{(0)}$  and $\Ran 
\, \pi_h^{(1)}$ are endowed with the scalar product $\langle. ,.\rangle_{L^2_w}$ of $L^2_w(\Omega)$ introduced in Section~\ref{sec.nota-sobo}. Moreover, the set $ \{i_j\}_{(i,j)\in\bigcup_{p=1}^3 \{p\}\times \{1,\dots, \ft n_{p}\}} $ is ordered using the lexicographical order, i.e. 
$$ \{i_j\}_{(i,j)\in\bigcup_{p=1}^3 \{p\}\times \{1,\dots, \ft n_{p}\}}=\{1_1,\dots.1_{\ft n_1},2_1,\dots,2_{\ft n_2},3_1\dots,3_{\ft m_3}, 3_{\ft m_3+1},\dots,3_{\ft n_3}\},$$
where we recall $\ft n_1=\text{Card }(\pa \ft C_1\cap \pa \Omega)$, $\ft n_2=\text{Card }\big (\pa \ft C_2\cap \pa \Omega\big)$,  $\ft m_3=\text{Card }\big(\pa \ft C_1\cap \pa \ft C_2\big)$ and $\ft n_3=\text{Card }\big(\ft U_1^{\overline \Omega} \setminus (\cup_{k=1}^2 \pa \ft C_k\cap \pa \Omega)\big )=\ft m_1^{\overline \Omega}-\ft n_1-\ft n_2$  are defined in Section~\ref{sec:doublewell}.\\

\noindent
Let us now define
\begin{equation}
\label{eq.chi1}
\widetilde u_{1}:=\frac{\chi_{1}}{\big \|\chi_{1}\big \|_{L_{w}^{2}}} \in \Lambda^0 H^1_{w,T}\left (\Omega\right ) \ \text{and}\ \ \widetilde u_{2}:=\frac{\chi_{2}}{\big \|\chi_{2}\big \|_{L_{w}^{2}}}\in \Lambda^0 H^1_{w,T}\left (\Omega\right )
\end{equation}
where, for $i\in\{1,2\}$, $0\not\equiv \chi_i\in C^\infty(\Omega,\mathbb R^+)$ is compactly supported in $\Omega$,
$\chi_{1}$ and $\chi_{2}$ have disjoint supports,
 and for some small $\alpha>0$ and $\beta>0$,
$$\supp \chi_{i}\subset \big(\ft C_{i}+B(0,\alpha)\big)\cap \Omega\ \ \text{and}\ \ \chi_{i}=1\ \text{on }\, \ft C_{i}\cap \{f<\min_{\pa\Omega}f-\beta\}.$$
Let us also consider a family of $L^{2}_{w}$-unitary
$1$-forms 
\begin{equation}
\label{eq.psi}
(\widetilde \psi_{i_j})_{(i,j)\in \bigcup_{p=1}^3 \{p\}\times \{1,\dots, \ft n_{p}\}}
\end{equation}
such that, for  $(i,j)\in \bigcup_{p=1}^3 \{p\}\times \{1,\dots, \ft n_{p}\}$, $\widetilde \psi_{i_j}\in  \Lambda^1 H^1_{w,T}\left (\Omega\right )\cap \Lambda^1 C^\infty\left (\overline \Omega\right ) $, and for some small $\delta>0$, 
$\supp \widetilde\psi_{i_j}\subset B(z_{i,j},\delta)\cap \overline\Omega$.\medskip

\noindent
It then holds,  for every  $(k,q)\in \{1,2 \}$, $(i,j)\in \bigcup_{p=1}^3 \{p\}\times \{1,\dots, \ft n_{p}\}$, and $(i',j')\in \bigcup_{p=1}^3 \{p\}\times \{1,\dots, \ft n_{p}\}$ (for $\delta>0$ small enough):
\begin{equation}
\label{eq.ortho}
\lp \widetilde u_{k}, \widetilde u_{q} \rp_{L^2_w} = \delta_{k,q}\ \ \text{and}\ \ 
 \lp \widetilde \psi_{i_j}, \widetilde \psi_{i'_{j'}} \rp_{L^2_w} =  \delta_{i,i'}\delta_{j,j'}\,.
\end{equation}

\noindent
Taking, for every $(i,j)\in \bigcup_{p=1}^3 \{p\}\times \{1,\dots, \ft n_{p}\}$,   $\widetilde \psi_{i_j}$
as a (normalized) truncated principal eigen-$1$-form of a local Witten Laplacian defined
around $z_{i,j}$ with  Dirichlet boundary conditions
\footnote{Actually, when $z_{i,j}\in\pa\Omega$ and $\mathcal V$ denotes its corresponding neighborhood  in $\overline\Omega$, (full) Dirichlet boundary conditions are considered
on $\pa\mathcal V\cap \Omega$ while only tangential Dirichlet boundary conditions are considered
on $\pa\mathcal V\cap \pa\Omega$.}, we obtain the following proposition (see \cite[Section~3.2.2 and Definition~42]{DLLN-saddle1} and references therein for details). It gathers the statements of \cite[Propositions~43 and 47]{DLLN-saddle1}   
which are the starting points of our analysis.

\begin{proposition}
\label{pr.Schuss}
Let us assume that the function $f$ satisfies \textbf{[H-Well]}. 
Then, the  families $(\widetilde u_{1},\widetilde u_{2})$ and 
$(\widetilde \psi_{i_j})_{(i,j)\in \bigcup_{p=1}^3 \{p\}\times \{1,\dots, \ft n_{p}\}}$ defined
in~\eqref{eq.chi1},~\eqref{eq.psi} can be chosen so that
the following estimates hold when $h\to 0$ (where $H$ is defined in~\eqref{eq.H}):
\begin{enumerate}
\item There exists $c>0$ such that: 
\begin{itemize}
\item[a)]  for  every $k\in \{1,2\}$, it holds
 $$  \big\|     (1-\pi_h^{(0)} ) \widetilde u_k \big  \|_{L^2_w}^2\ \leq\  h^{\frac12}\,  \big \|   \nabla  \widetilde u_k \big \|_{L^2_w}^2      \le   e^{-\frac{2}{h}(H-  \frac c 2)},$$
\item[b)]  for every $ i\in\{1,2,3\}$ and   $j\in\{1,\dots,\ft n_{i}\}$, it holds
\begin{equation*}
   \big\|   (1-\pi_h^{(1)} ) \widetilde \psi_{i_j} \big \|_{H^1_w}^2=   \mathcal O \big (e^{-\frac{2c}{h}} \big).
     \end{equation*} 
\end{itemize}     
\item For every $k\in \{1,2 \}$ and  $(i,j)\in \bigcup_{p=1}^3 \{p\}\times \{1,\dots, \ft n_{p}\}$, there exists a real constant  $\ve_{i,j,k}\in\{-1,1\}$
independent of $h$ such that it holds
\begin{equation*}
  \lp       \nabla \widetilde u_k ,    \widetilde \psi_{i_j} \rp_{L^2_w} =\begin{cases} 
  -C_{i,j,k} \ h^{-\frac34}\,  e^{-\frac Hh}   \    \big(  1  +  \mathcal    O(h )   \big)   &  \text{ when } z_{i,j} \in  \pa \ft C_{k}\cap \pa\Omega \\
  \ve_{i,j,k} C_{i,j,k} \ h^{-\frac12}\,  e^{-\frac Hh }   \    \big(  1  +    \mathcal  O(h )   \big)   &  \text{ when } z_{i,j} \in  \pa \ft C_{1}\cap \pa \ft C_{2}\\
 0   &   \text{ else},  \end{cases} 
  \end{equation*}
where the remainder terms $\mathcal O(h)$ admit a full asymptotic expansion in~$h$,~and
\begin{equation}
\label{eq.C}
 C_{i,j,k}=\begin{cases}\pi ^{-\frac{1}{4}} \sqrt{2\, \partial_n f(z_{i,j}) }  \,  \frac{\big(\displaystyle \det \Hess f(x_{k})\big)^{\frac14}}{\big( \displaystyle \det \Hess f |_{ \partial \Omega}(z_{i,j})   \big)^{\frac14}}   & \text{if $z_{i,j}\in  \pa \ft C_{k}\cap \pa\Omega$}\\
 \pi^{-\frac{1}{2}} \sqrt{|\lambda_{-}(z_{i,j})| }\,
 \frac{\big( \displaystyle \det \Hess f(x_{k})\big)^{\frac14}}{\big| \displaystyle \det \Hess f(z_{i,j})   \big|^{\frac14}}
 & \text{if $z_{i,j}\in  \pa \ft C_{1}\cap \pa \ft C_{2}$},
 \end{cases}
\end{equation}
where $\lambda_{-}(z_{i,j})$ denotes the negative eigenvalue of $\Hess f(z_{i,j})$.
%\item For every open set $\Sigma$ of $\pa\Omega$ and every   $F\in L^{\infty}(\partial \Omega,\mathbb R)$, 
%one has for every $(i,j)\in \bigcup_{p=1}^3 \{p\}\times \{1,\dots, \ft n_{p}\}$ and for some  $c>0$ independent of $h$:
%\begin{equation*}
%  \int_{\Sigma}   F \,  \widetilde \psi_{i_{j}} \cdot n  \   e^{- \frac{2}{h} f}  \     =\begin{cases}  \mathcal O \big(h^{\frac{d-3}{4}}     e^{-\frac 1h \min_{\pa \Omega}f } \big)  &  \text{ if } i\in\{1,2\}  \\
%\mathcal O \big( e^{-\frac 1h (\min_{\pa \Omega}f+c)} \big)   &   \text{ if } i=3\ \text{or}\ z_{i,j}\notin\overline\Sigma.
%  \end{cases} 
%  \end{equation*}
% Moreover, when $(i,j)\in \bigcup_{p=1}^2 \{p\}\times \{1,\dots, \ft n_{p}\}$, $z_{i,j}\in \Sigma$, and  $F$ is  $C^{\infty}$ around $z_i$, it holds 
%  \begin{equation*}
%  \int_{\Sigma}   F \,  \widetilde \psi_{i_{j}} \cdot n  \   e^{- \frac{2}{h} f}       = \frac{\pi ^{\frac{d-1}{4}} \, \sqrt{2\, \partial_n f(z_{i,j}) }}  {\big( \det \Hess f |_{ \partial \Omega}(z_{i,j})   \big)^{\frac14}}  \, h^{\frac{d-3}{4}}   \,     e^{-\frac 1h \min_{\pa \Omega}f}    \big(  F(z_{i,j})   +  \mathcal    O(h )    \big),
%  \end{equation*}
%  where the remainder term $\mathcal O(h)$ admits a full asymptotic expansion in $h$.
  \end{enumerate}
\end{proposition}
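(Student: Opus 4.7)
The final statement packages two ingredients of the subsequent analysis: part $1$ localises the constructed $0$-form quasi-mode $\widetilde u_{k}$ and the $1$-form quasi-modes $\widetilde \psi_{i_{j}}$ inside the low-energy spectral subspaces of $L_{f,h}^{D,(0)}$ and $L_{f,h}^{D,(1)}$, while part $2$ computes the interaction matrix $\langle \nabla \widetilde u_{k},\widetilde \psi_{i_{j}}\rangle_{L^{2}_{w}}$ to leading order. For part $1$a, the starting point is the Dirichlet form identity $\langle L_{f,h}^{D,(0)}v,v\rangle_{L^{2}_{w}}=\tfrac{h}{2}\|\nabla v\|_{L^{2}_{w}}^{2}$, which together with Lemma~\ref{ran1} yields
$$\|(1-\pi_{h}^{(0)})\widetilde u_{k}\|_{L^{2}_{w}}^{2}\leq \tfrac{2}{\sqrt h}\langle L_{f,h}^{D,(0)}\widetilde u_{k},\widetilde u_{k}\rangle_{L^{2}_{w}}=\sqrt h\,\|\nabla \widetilde u_{k}\|_{L^{2}_{w}}^{2}.$$
The cut-off $\chi_{k}$ is engineered to equal $1$ on $\ft C_{k}\cap\{f<\min_{\pa\Omega}f-\beta\}$ for a small $\beta>0$, so that $\nabla\chi_{k}$ is supported where $f\geq \min_{\pa\Omega}f-\beta$. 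Laplace's method at $x_{k}$ then gives $\|\chi_{k}\|_{L^{2}_{w}}^{2}\asymp h^{d/2}e^{-\frac{2}{h}f(x_{k})}$, while $\|\nabla\chi_{k}\|_{L^{2}_{w}}^{2}=\mathcal O\bigl(e^{-\frac{2}{h}(\min_{\pa\Omega}f-\beta)}\bigr)$; the claim follows with $c=2\beta$.

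For part $1$b, I would take $\widetilde \psi_{i_{j}}$ to be a normalised truncation of the principal eigen-$1$-form of a local Witten Laplacian on a small neighborhood $\mathcal V$ of $z_{i,j}$, with (tangential) Dirichlet conditions as in \cite{HeNi1}. Harmonic-approximation arguments then show that the local principal eigenvalue is exponentially small and Agmon estimates give exponential decay of the local eigenform away from $z_{i,j}$, so that the truncation error is $\mathcal O(e^{-c/h})$. The same Dirichlet-form/spectral-gap argument as above, now applied to $L_{f,h}^{D,(1)}$ and combined with elliptic $H^{1}$-regularity for this operator, controls the full $H^{1}_{w}$-norm of $(1-\pi_{h}^{(1)})\widetilde\psi_{i_{j}}$.

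For part $2$, I would rewrite
$$\langle \nabla \widetilde u_{k},\widetilde \psi_{i_{j}}\rangle_{L^{2}_{w}}=\|\chi_{k}\|_{L^{2}_{w}}^{-1}\int_{\Omega}\langle d\chi_{k},\widetilde\psi_{i_{j}}\rangle\,e^{-\frac{2}{h}f}.$$
The supports of $d\chi_{k}$ and $\widetilde\psi_{i_{j}}\subset B(z_{i,j},\delta)\cap\overline\Omega$ intersect only when $z_{i,j}\in\pa\ft C_{k}$, which produces the third (vanishing) case. In the two remaining cases, a WKB ansatz $\widetilde\psi_{i_{j}}(x)\sim a(x;h)e^{-\Phi_{i,j}(x)/h}$ near $z_{i,j}$---with $\Phi_{i,j}$ the local Agmon phase vanishing at $z_{i,j}$ and $a(\cdot;h)$ a classical symbol in $h$---reduces the integral to a Laplace integral concentrated on the codimension-one stable manifold of $z_{i,j}$. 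Gaussian integration yields the prefactor $h^{-3/4}$ when $z_{i,j}\in\pa\Omega$ and $h^{-1/2}$ when $z_{i,j}\in\Omega$ is an interior saddle; the factor $\sqrt{2\,\partial_{n}f(z_{i,j})}$ in the former and $\sqrt{|\lambda_{-}(z_{i,j})|}$ in the latter come from the unstable direction at $z_{i,j}$, while the normalisation $\|\chi_{k}\|_{L^{2}_{w}}^{-1}$ evaluated at $x_{k}$ contributes the $\bigl(\det\Hess f(x_{k})\bigr)^{1/4}$ appearing in~\eqref{eq.C}. The sign $\ve_{i,j,k}$ is then fixed by the orientation chosen for $\widetilde\psi_{i_{j}}$, and the full asymptotic expansion in $h$ follows by iterating the WKB construction. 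The hardest point is precisely this coherent construction of the WKB eigen-$1$-forms at generalised critical points---the boundary saddles require a phase compatible with the tangential Dirichlet condition, and one must arrange that $d\chi_{k}$ is transverse to $\Phi_{i,j}$ so that the Laplace integrals are non-degenerate---which is the content of \cite[Propositions~43 and~47]{DLLN-saddle1} on which the present work explicitly relies.
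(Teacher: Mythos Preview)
Your proposal is correct and aligns with the paper's approach: the paper does not prove this proposition but simply states that it ``gathers the statements of \cite[Propositions~43 and 47]{DLLN-saddle1}'', taking the $\widetilde\psi_{i_{j}}$ as normalised truncated principal eigen-$1$-forms of local Witten Laplacians around the $z_{i,j}$, exactly as you describe. Your sketch of part~$1$a via the spectral gap and Laplace's method, of part~$1$b via Agmon decay of the local eigenforms, and of part~$2$ via the support intersection and WKB/Laplace analysis is precisely the route taken in \cite{DLLN-saddle1}; you in fact give more detail than the present paper, which defers entirely to that reference.
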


\begin{remark}
 \label{Ree.estime1}
In the second item in Proposition~\ref{pr.Schuss}, notice that  it follows from the notation introduced in Section~\ref{sec:doublewell} that for every $k\in\{1,2\}$ and $(i,j)\in \bigcup_{p=1}^3 \{p\}\times \{1,\dots, \ft n_{p}\}$, one has:
\begin{itemize}
\item $
z_{i,j}\in \pa\ft  C_{k}\cap \pa\Omega 
$ if and only if $i=k$ (and thus $j\in\{1,\dots,\ft n_{k}\}$),
\item $z_{i,j}\in \pa \ft C_{1}\cap \pa \ft C_{2}$
if and only if $i=3$ (and thus $j\in \{1,\dots,\ft m_{3}\}$).
\end{itemize}
 \end{remark}

  \noindent
 As a consequence of \eqref{eq.ortho} and the first item in Proposition~\ref{pr.Schuss},
there exists $c>0$ such that it holds in the limit $h\to 0$:
\begin{equation}
 \label{eq.G0}
 G_0:=\left(\big\lp \pi_h^{(0)}  \widetilde  u_{k},  \pi_h^{(0)} \widetilde  u_{q} \big\rp_{L^2_w}\right)_{k,q\in\{1,2\}}= 
 I_{2}+ \mathcal O  \big(e^{-\frac{c}{h}} \big)
\end{equation}
 and  
 \begin{equation}
 \label{eq.G1}
G_1:=\! \left(\big \lp \pi_h^{(1)}  \widetilde  \psi_{i_j},  \pi_h^{(1)}  \widetilde \psi_{i'_{j'}}  \big\rp_{L^2_w}\right)_{ \substack{(i,j)\in \bigcup_{p=1}^3 \{p\}\times \{1,\dots, \ft n_{p}\}\\  \, (i',j')\in \bigcup_{p=1}^3 \{p\}\times \{1,\dots, \ft n_{p}\} } }\!\!= I_{\ft m_1^{\overline \Omega}}+ \mathcal O (e^{-\frac{c}{h}}).\!
\end{equation}
It then follows from Lemmata~\ref{ran1} and~\ref{ran1b} that,
for every $h>0$ small enough,
 the family 
$  \big (  \pi_h^{(0)}\widetilde u_k  \big )_{k\in\{1,2\}}$ is a basis of $\range   \pi_h^{(0)}$ and 
that
$ \big  (\pi_h^{(1)}\widetilde \psi_{i_j} \big  )_{(i,j)\in \bigcup_{p=1}^3 \{p\}\times \{1,\dots, \ft n_{p}\}}$ is a basis of  $\range   \pi_h^{(1)}$.\medskip
  
 \noindent 
  Let us now define the $\ft m_1^{\overline \Omega}\times 2$ matrix
\begin{equation}\label{eq.S}
 S:=\left (  \big \lp\nabla \pi_h^{(0)}\widetilde u_k ,    \pi_h^{(1)}\widetilde \psi_{i_j}  \big \rp_{L^2_w}\right)_{(i,j)\in \bigcup_{p=1}^3 \{p\}\times \{1,\dots, \ft n_{p}\},\, k\in\{1,2\}}.
\end{equation}
According to the two items in Propositions~\ref{pr.Schuss},
and using the identity
$$
 \big \lp\nabla \pi_h^{(0)}\widetilde u_k ,    \pi_h^{(1)}\widetilde \psi_{i_j}  \big \rp_{L^2_w}=
 \big \lp\nabla \widetilde u_k ,    \widetilde \psi_{i_j}  \big \rp_{L^2_w}-
 \big \lp\nabla \widetilde u_k ,  \big (1-\pi_h^{(1)} \big )  \widetilde \psi_{i_j}  \big \rp_{L^2_w}
$$
which follows from~\eqref{commute2}, 
there exists $c>0$ such that the coefficients of $S$ satisfy when $h\to 0$:
\begin{equation}\label{eq.Se}
S_{i_j,k}=\begin{cases}
 \big   \lp       \nabla \widetilde u_k ,    \widetilde \psi_{i_j} \big  \rp_{L^2_w}\big(1+\mathcal O\big (e^{-\frac ch}\big )\big)    
  &  \text{ if } z_{i,j} \in  \pa \ft C_{k}\cap \pa\Omega \\
  \big  \lp       \nabla \widetilde u_k ,    \widetilde \psi_{i_j}  \big \rp_{L^2_w}\big(1+\mathcal O\big (e^{-\frac ch}\big )\big)  
  &  \text{ if } z_{i,j} \in  \pa \ft C_{1}\cap \pa\ft  C_{2}\\
 \mathcal O \big (e^{-\frac 1h (H +c)} \big )   &   \text{ else}.  \end{cases} 
\end{equation}
\noindent
Let us  denote by $\widetilde{ \Upsilon}$ and  $\widetilde{ \Psi}$
 the following families written as row vectors,
$$
\widetilde{ \Upsilon}\ :=\ \big( \pi_h^{(0)}  \widetilde  u_{1}, \pi_h^{(0)}  \widetilde  u_{2}\big) \quad
\text{and}
\quad
\widetilde{ \Psi}\ :=\ \big(\pi_h^{(1)}\widetilde \psi_{i_j}\big)_{(i,j)\in \bigcup_{p=1}^3 \{p\}\times \{1,\dots, \ft n_{p}\}},
$$
and define
\begin{equation}
\label{eq.B0-B1}
\mathcal B_{0}= (\varphi_{1},\varphi_{2}) 
 := \widetilde{ \Upsilon}\,G_{0}^{-\frac12}\ \ 
\text{and}\ \ 
\mathcal B_{1}=  \big (\psi_{i_{j}} \big )_{(i,j)\in \bigcup_{p=1}^3 \{p\}\times \{1,\dots, \ft n_{p}\}} := \widetilde{ \Psi}\,G_{1}^{-\frac12} ,
\end{equation}
where $G_0$ and $G_1$ are defined in \eqref{eq.G0} and \eqref{eq.G1}. For every $h>0$ small enough, the families $\mathcal B_{0}$ and $\mathcal B_{1}$ are then respectively
orthonormal bases of   $\range   \pi_h^{(0)}$ and of $\range   \pi_h^{(1)}$.\\
 The matrix $L$  of $L_{f,h}^{D,(0)}\big|_{\range   \pi_h^{(0)}}$ in the basis $\mathcal B_{0}$
is   given by 
\begin{equation}
\label{eq.L}
L \, =\, G_{0}^{-\frac12} \, \left(  \big \langle L_{f,h}\pi_h^{(0)}  \widetilde  u_{k},\pi_h^{(0)}  \widetilde  u_{q} \big \rangle_{L^2_w}\right)_{1\leq k,q\leq 2}\,G_{0}^{-\frac12}.
\end{equation}
\noindent
This matrix is sometimes called the {\it interaction matrix} in the literature
dealing with the study of semiclassical Schr\"odinger operators
(see e.g. \cite{HelSj1} or \cite{DiSj}).
Moreover, the matrix $M$ of $\nabla :\range   \pi_h^{(0)}\to \range   \pi_h^{(1)}$ (see~\eqref{nabla}) 
in the bases $\mathcal B_{0}$ and $\mathcal B_{1}$
is   given by 
\begin{equation}
\label{eq.M}
M\ =\ G_{1}^{-\frac12}\,S\,G_{0}^{-\frac12},
\end{equation}
where $S$ is defined in \eqref{eq.S}.  Since 
$L_{f,h}^{D,(0)}\big|_{\Ran\, \pi_h^{(0)}}=\frac h2 \nabla^{*}\nabla$,  
the matrix $M$ satisfies
\begin{equation}
\label{eq.ML}
L =\frac h2 M^{*}M.
\end{equation}
In order to prove Proposition~\ref{pr1}, it is then sufficient to get asymptotic estimates on the coefficients of the matrix $M$. This is the purpose of the next proposition. 
\begin{proposition} 
\label{interaction1} 

Let us assume that the hypothesis \textbf{[H-Well]} is satisfied.  Let  $(\widetilde u_{k})_{k\in\{1,2\}}$  be  defined by~\eqref{eq.chi1}. Let $(\varphi_{k})_{k\in\{1,2\}}$  and $ \big (\psi_{i_{j}} \big )_{(i,j)\in \bigcup_{p=1}^3 \{p\}\times \{1,\dots, \ft n_{p}\}}$ be  defined by~\eqref{eq.B0-B1}. Then, for all $k\in\{1,2\}$, there exists $c>0$ such that when $h\to 0$:
\begin{enumerate}%[leftmargin=1.5cm,rightmargin=1.5cm]
\item[i)] for every   $j\in\{1,\dots,\ft n_{k}\}$,
 \begin{align*}
 \!\!\!\!\! \lp    \nabla   \varphi_k,   \psi_{k_j} \rp_{L^2_w}&=  \langle \nabla  \widetilde u_k, \widetilde \psi_{k_j} \rangle_{L^2_w} \, \big (1+\mathcal O(e^{-\frac ch} )\big) 
  =-C_{k,j,k} \,h^{-\frac34}\,  e^{-\frac Hh}   \, \big (1+   \mathcal  O(h )   \big),
\end{align*} 
\item[ii)] for every   $j\in\{1,\dots,\ft n_{p}\}$ with $p \in\{1,2\}\setminus\{ k\}$,
 \begin{equation*}
  \lp    \nabla   \varphi_k,   \psi_{p_j} \rp_{L^2_w}   =\mathcal O\big(   e^{-\frac 1h(H +c)}   \big),
  \end{equation*}   
\item[iii)]  for every  $j\in\{1,\dots,\ft  m_{3}\}$,
\begin{align*}
\!\!\!\!\!  \lp    \nabla \varphi_k,   \psi_{3_j} \rp_{L^2_w}&=  \langle \nabla  \widetilde u_k, \widetilde \psi_{3_j} \rangle_{L^2_w} \, \big (1+\mathcal O(e^{-\frac ch} )\big) =\varepsilon_{3,j,k}\, C_{3,j,k} \,h^{-\frac12}\,  e^{-\frac Hh }   \, \big (1+ \mathcal    O(h )   \big),
\end{align*}
\item[iv)] and for all $j\in\{ \ft  m_{3}+1,\dots, \ft n_3\}$,
  \begin{equation*}
  \lp    \nabla   \varphi_k,   \psi_{3_j} \rp_{L^2_w}   =\mathcal O\big(   e^{-\frac 1h(H +c)}   \big),
  \end{equation*}   
  \end{enumerate}
  where we recall that $H=\min_{\pa \Omega}f-\min_{\overline\Omega}f$ (see \eqref{eq.H}), the coefficients $C_{i,j,k}$ are defined in \eqref{eq.C},
  and  the terms $\mathcal O(h)$ admit a full asymptotic expansion in $h$.
\end{proposition}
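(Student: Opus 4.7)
The plan is to exploit the factorization $M = G_1^{-1/2}\,S\,G_0^{-1/2}$ established in \eqref{eq.M}, which means that the coefficients $\langle \nabla \varphi_k, \psi_{i_j}\rangle_{L^2_w}$ to be computed are precisely the entries $M_{i_j,k}$ of $M$. Since the rectangular matrix $S$ has entries already under good control through \eqref{eq.Se} and Proposition~\ref{pr.Schuss}, and since the Gram matrices $G_0$ and $G_1$ are exponentially close to the identity by \eqref{eq.G0} and \eqref{eq.G1}, the entire strategy consists in transferring the asymptotics of $S$ onto $M$ at the cost of an exponentially small multiplicative error.

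More precisely, I would first Taylor-expand $G_0^{-1/2}$ and $G_1^{-1/2}$: from $G_0 = I_2 + R_0$ and $G_1 = I_{\ft m_1^{\overline \Omega}}+R_1$ with $R_0,R_1 = \mathcal O(e^{-c/h})$, a Neumann-type series gives $G_0^{-1/2}=I_2+\mathcal O(e^{-c/h})$ and $G_1^{-1/2}=I_{\ft m_1^{\overline \Omega}}+\mathcal O(e^{-c/h})$ (coefficient-wise). Plugging this into \eqref{eq.M} and expanding, the $(i_j,k)$ entry of $M$ reads
\[
M_{i_j,k}\ =\ S_{i_j,k} \;+\; \sum_{(i'_{j'},k')\neq (i_j,k)} \mathcal O\big(e^{-c/h}\big)\, S_{i'_{j'},k'}\,,
\]
where each correction term carries at least one off-diagonal factor coming from $G_0^{-1/2}-I_2$ or from $G_1^{-1/2}-I_{\ft m_1^{\overline \Omega}}$. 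Since every entry of $S$ is $\mathcal O(h^{-3/4}\,e^{-H/h})$ at worst by Proposition~\ref{pr.Schuss} and \eqref{eq.Se}, this correction is $\mathcal O(e^{-(H+c')/h})$ for any $c' < c$.

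Next, I would conclude in each of the four cases. For cases (i) and (iii), corresponding to $z_{i,j}\in\pa\ft C_k\cap\pa\Omega$ and $z_{i,j}\in\pa\ft C_1\cap\pa\ft C_2$ respectively, the leading entry $S_{i_j,k}$ satisfies $S_{i_j,k}=\langle\nabla\widetilde u_k,\widetilde\psi_{i_j}\rangle_{L^2_w}\,(1+\mathcal O(e^{-c/h}))$ by \eqref{eq.Se} and is of exact order $h^{-3/4}\,e^{-H/h}$ or $h^{-1/2}\,e^{-H/h}$; the additive correction $\mathcal O(e^{-(H+c')/h})$ is thus absorbed into the multiplicative factor $(1+\mathcal O(e^{-c/h}))$, and Proposition~\ref{pr.Schuss} yields the explicit constants $C_{i,j,k}$ and signs $\varepsilon_{i,j,k}$. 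In turn, since the remainders in Proposition~\ref{pr.Schuss} admit a full asymptotic expansion in $h$, multiplying by $(1+\mathcal O(e^{-c/h}))$ preserves this expansion, so the final error is still of the form $(1+\mathcal O(h))$. For cases (ii) and (iv), $S_{i_j,k}$ is already $\mathcal O(e^{-(H+c)/h})$ by \eqref{eq.Se}, and combining it with the additive correction gives the desired bound $M_{i_j,k}=\mathcal O(e^{-(H+c'')/h})$ for some $c''>0$.

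The only mildly delicate point, which I view as the main piece of bookkeeping rather than a genuine obstacle, is keeping track of the successive shrinkings of the exponential constant $c$: the expansion of $G_0^{-1/2}, G_1^{-1/2}$, the absorption of additive errors into multiplicative ones, and the comparison of $h^{-3/4}\,e^{-(H+c)/h}$ with $e^{-(H+c'')/h}$ all potentially reduce $c$, and one must check that the final constant remains strictly positive and independent of $h$. All the genuine analytic work — quasimodal construction, Agmon-type decay, explicit computation of the leading constants $C_{i,j,k}$ — has already been carried out in Proposition~\ref{pr.Schuss}; the proof here is essentially linear algebra in dimension at most $\ft m_1^{\overline \Omega} + 2$ combined with exponentially small error propagation.
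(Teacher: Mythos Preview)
Your proposal is correct and follows exactly the same route as the paper's own proof, which is a one-line reference to \eqref{eq.G0}--\eqref{eq.Se}, \eqref{eq.M}, and item~2 in Proposition~\ref{pr.Schuss}. You have simply written out in detail the linear-algebra bookkeeping that the paper leaves implicit; there is no difference in approach.
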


\begin{proof}
The results of Proposition~\ref{interaction1} follow from   \eqref{eq.G0}--\eqref{eq.Se}, \eqref{eq.M}, and item 2 in Proposition~\ref{pr.Schuss} (see also Remark~\ref{Ree.estime1}).
\end{proof}

\noindent
Proposition~\ref{pr1} is a consequence of Proposition~\ref{interaction1} and of \eqref{eq.ML}.
They indeed imply the existence of some $c>0$
such that when $h\to 0$, the coefficients $\ve(h)$, $\alpha_{1}(h)$,
and $\alpha_{2}(h)$
defined by \eqref{L} satisfy
\begin{equation}
\label{veh'}
\ve(h)=\begin{cases}
\mathcal O\big (e^{-\frac ch}\big )& \text{if } \pa \ft C_{1}\cap \ft \pa C_{2}=\emptyset\\
\sum \limits_{j=1}^{\ft m_{3}}\varepsilon_{3,j,1}\,\varepsilon_{3,j,2}\,C_{3,j,1}\, C_{3,j,2}\,\sqrt h\,   \big(  1  +    \mathcal  O(h )   \big) & \text{if } \pa \ft  C_{1}\cap \pa \ft C_{2}\neq\emptyset,
\end{cases}
\end{equation}
and, for $k\in\{1,2\}$,
$$\alpha_k(h)=\begin{cases} \sum \limits_{j=1}^{\ft n_{k}}C^{2}_{k,j,k}    \big(  1  +   \mathcal   O(h )   \big) & \text{if } \pa \ft C_{1}\cap \pa \ft C_{2}=\emptyset\\
\sum \limits_{j=1}^{\ft n_{k}}C^{2}_{k,j,k}    \big(  1  +   \mathcal   O(h )   \big)     +\sum \limits_{j=1}^{\ft m_{3}}C^{2}_{3,j,k}\,\sqrt h\, \big(  1  +  \mathcal    O( h )   \big) & \text{if } \pa \ft C_{1}\cap \pa\ft  C_{2}\neq\emptyset,
\end{cases}
$$
where 
 the $C_{i,j,k}$'s are defined in \eqref{eq.C}
and 
 the remainder terms $\mathcal O(h)$ admit a full
asymptotic expansion in  $h$.
The relations \eqref{veh}--\eqref{aj-12} follow. 
\medskip

\noindent
Let us conclude this section by noticing the following 
consequences of Proposition~\ref{pr1} which will needed
in upcoming computations.\medskip

\noindent
$1.$ From~\eqref{L}, it holds for $i\in\{1,2\}$  and every $h$ small enough:
\begin{equation}
\label{lh1}
\lambda_{i}(h)=\frac{\alpha_1(h)+\alpha_2(h) + (-1)^{i}\sqrt{\big  ( \alpha_2(h) -\alpha_1(h)\big )^2+4 \ve(h)^2     }}{4\sqrt h} e^{-2\frac H h}\,,
\end{equation}
where 
 $0<\lambda_{1}(h)<\lambda_2(h)$ denote the two smallest eigenvalues of $L^{D,(0)}_{f,h}$. 
It then follows from~\eqref{lh1},  \eqref{veh'}, and \eqref{alpha1}  that $4\sqrt h \,  \lambda_{1}(h)\,  e^{2\frac Hh}$ and $4\sqrt h \,  \lambda_2(h)\,  e^{2\frac Hh}$ admit a full asymptotic expansion in $h$ when  $\pa{\ft C_1} \cap \pa{ \ft C_2}=\emptyset$ and in $\sqrt h$ when  $\pa{\ft C_1} \cap \pa{ \ft C_2}\neq\emptyset$.\medskip

\noindent
$2.$
From  \eqref{L}, since $u_h$ is the principal eigenfunction of $L_{f,h}^{D,(0)}$ satisfying \eqref{uh.norma}, one has for any $h>0$ small enough:\medskip

\noindent
-- either $\ve(h)=0$, in which case one has necessarily $\alpha_1(h)\neq \alpha_2(h)$
(since $0<\lambda_{1}(h)<\lambda_2(h)$)
and then 
$$ u_h = \pm \varphi_i\,,$$ 
where 
 the functions $\varphi_1$ and $\varphi_2$ are defined by~\eqref{eq.B0-B1}
 and 
$i\in\{1,2\}$ is such that 
$\alpha_i(h)=\min\big(\alpha_1(h), \alpha_2(h)\big)$,\medskip

\noindent
-- or $\ve(h)\neq 0$, in which case \eqref{lh1} and an elementary computation lead to
\begin{equation}
\label{equa-uh}
  u_h=\pm \left(\frac{1}{\sqrt{1+\beta(h)^2}} \,   \varphi_1 + \frac{\beta(h)}{\sqrt{1+\beta(h)^2}}\, \varphi_2\right),  
    \end{equation}
where $\beta(h)$ is defined by
  \begin{equation}\label{betah}
 \beta(h)=-\frac{2\,\ve(h)}{\alpha_2(h)-\alpha_1(h)+\sqrt{\big  ( \alpha_2(h) -\alpha_1(h)\big )^2+ 4\ve(h)^2     }} .
    \end{equation}

\noindent
We conclude this section by stating the following proposition which will also be needed to study the asymptotic behaviour when $h\to 0$ of the law of $X_{\tau_\Omega}$ when $X_0\sim \nu_h$. 
It is the statement of \cite[Proposition~65]{DLLN-saddle1}
in our specific setting.

\begin{proposition}
\label{co.bdy-estim}
Let us assume that the hypothesis \textbf{[H-Well]} is satisfied.  Let   $ \big (\psi_{i_{j}} \big )_{(i,j)\in\bigcup_{p=1}^3 \{p\}\times \{1,\dots, \ft n_{p}\}}$ be defined by~\eqref{eq.B0-B1}. 
Let $\Sigma$ be an open subset of $\pa\Omega$ and   $F\in L^{\infty}(\partial \Omega,\mathbb R)$. One then has for every $(i,j)\in\bigcup_{p=1}^3 \{p\}\times \{1,\dots, \ft n_{p}\}$,  when $h\to 0$,
\begin{equation*}
  \int_{\Sigma}   F \,   \psi_{i_{j}} \cdot n  \   e^{- \frac{2}{h} f}       =\begin{cases}  \mathcal O \big(h^{\frac{d-3}{4}}     e^{-\frac{1}{h}\min_{\pa \Omega} f } \big)  &  \text{ if } i\in\{1,2\}
  \ \text{and}\ z_{i,j}\in\overline\Sigma  \\
\mathcal O \big( e^{-\frac{1}{h} (\min_{\pa \Omega} f+c)} \big)   &   \text{ if } i=3\ \text{or}\ z_{i,j}\notin\overline\Sigma,
  \end{cases} 
  \end{equation*}
  where the constant $c>0$ is independent of $h$.
 Moreover, when $(i,j)\in\bigcup_{p=1}^2 \{p\}\times \{1,\dots, \ft n_{p}\}$, $z_{i,j}\in {\Sigma}$, and  $F$ is  $C^{\infty}$ around $z_{i,j}$, it holds
  \begin{equation*}
  \int_{\Sigma}   F \,  \psi_{i_{j}} \cdot n  \   e^{- \frac{2}{h} f}       = \pi ^{\frac{d-1}{4}} \frac{\sqrt{2\, \partial_n f(z_{i,j}) }}  {\big( \det \Hess f |_{ \partial \Omega}(z_{i,j})   \big)^{\frac14}}  \, h^{\frac{d-3}{4}}   \,     e^{-\frac{1}{h}\min_{\pa \Omega} f}      \big(  F(z_{i,j})   +   \mathcal   O(h )    \big),
  \end{equation*}
  where the above remainder term $\mathcal O(h)$ admits a full asymptotic expansion in $h$.
\end{proposition}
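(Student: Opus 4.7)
\medskip\noindent
\textbf{Proof proposal.} My plan is first to replace the projected family $\psi_{i_j}$ by the localized quasi-modes $\widetilde \psi_{i_j}$ (up to exponentially small errors), then to exploit the support properties of $\widetilde\psi_{i_j}$ to dispatch the negligible cases, and finally to insert the WKB form of $\widetilde \psi_{i_j}\cdot n$ in the remaining case to perform a Laplace's method on $\pa\Omega$.

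\medskip\noindent
For the reduction step, the identity $\psi_{i_j}-\widetilde\psi_{i_j} = (G_1^{-\frac12}-I)\pi_h^{(1)}\widetilde\psi_{i_j} - (1-\pi_h^{(1)})\widetilde\psi_{i_j}$ coming from \eqref{eq.B0-B1}, combined with $G_1^{-\frac12}=I+\mathcal O(e^{-c/h})$ from \eqref{eq.G1} and $\|(1-\pi_h^{(1)})\widetilde\psi_{i_j}\|_{H^1_w}=\mathcal O(e^{-c/h})$ from item 1.b of Proposition~\ref{pr.Schuss}, yields $\|\psi_{i_j}-\widetilde\psi_{i_j}\|_{H^1_w(\Omega)}=\mathcal O(e^{-c/h})$. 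Applying the standard $H^1(\Omega)\to L^2(\pa\Omega)$ trace inequality to $e^{-f/h}(\psi_{i_j}-\widetilde\psi_{i_j})$, followed by Cauchy-Schwarz with the Laplace estimate $\|e^{-f/h}\|_{L^2(\pa\Omega)}=\mathcal O(h^{(d-1)/4}e^{-\min_{\pa\Omega}f/h})$, shows that $\int_{\pa\Omega}|F||\psi_{i_j}-\widetilde\psi_{i_j}|\,e^{-\frac 2h f}$ is of order $e^{-(\min_{\pa\Omega}f+c')/h}$ for some $c'>0$. It therefore suffices to analyse $I_{i_j}(h):=\int_\Sigma F\,\widetilde\psi_{i_j}\cdot n\, e^{-\frac 2h f}$.

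\medskip\noindent
The negligible cases follow from support considerations. If $i=3$ and $z_{3,j}\in\Omega$ (which covers all $j\le\ft m_3$, since $\pa\ft C_1\cap\pa\ft C_2\subset\ft U_1^\Omega$ by \cite[Prop.~15]{DLLN-saddle1}), then $\widetilde\psi_{3,j}$ may be chosen compactly supported in $\Omega$ and $I_{3_j}(h)=0$. If $i=3$ and $z_{3,j}\in\pa\Omega$, then $f(z_{3,j})>\min_{\pa\Omega}f$ and a crude pointwise bound on $\widetilde\psi_{3,j}\cdot n$ combined with Laplace's method gives $I_{3_j}(h)=\mathcal O(e^{-(\min_{\pa\Omega}f+c)/h})$. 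Similarly, for $i\in\{1,2\}$ with $z_{i,j}\notin\overline\Sigma$, shrinking the support parameter $\delta$ below $\dist(z_{i,j},\overline\Sigma)$ forces $\widetilde\psi_{i_j}|_\Sigma\equiv 0$.

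\medskip\noindent
The main case is $i\in\{1,2\}$, $z_{i,j}\in\overline\Sigma$. Here $\widetilde\psi_{i_j}$ is (up to truncation and $L^2_w$-normalization) the principal eigen-$1$-form of the tangential Dirichlet realization of $L^{(1)}_{f,h}$ on a half-ball around $z_{i,j}\in\pa\Omega$. A semiclassical WKB construction in the spirit of \cite{HeNi1}, using Morse coordinates $(x',x_d)$ with $\pa\Omega=\{x_d=0\}$ and $x_d>0$ in $\Omega$, produces an explicit leading profile for $\widetilde\psi_{i_j}\cdot n|_{\pa\Omega}$ whose normal-direction factor is governed by $\pa_n f(z_{i,j})$ through the Agmon decay transverse to $\pa\Omega$, and whose tangential part is a Gaussian centered at $z_{i,j}$, normalized via $(\det\Hess f|_{\pa\Omega}(z_{i,j}))^{1/4}$. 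Inserting this profile into $I_{i_j}(h)$ and applying Laplace's method on $\pa\Omega$ at the non-degenerate local minimum $z_{i,j}$ of $f|_{\pa\Omega}$ (with $f(z_{i,j})=\min_{\pa\Omega}f$) yields both the rough bound $\mathcal O(h^{(d-3)/4}e^{-\min_{\pa\Omega}f/h})$ (from an $L^\infty$ bound on the profile paired with $F\in L^\infty$) and the precise asymptotic with full expansion in $h$ (for $F\in C^\infty$ near $z_{i,j}$). The main obstacle is the WKB identification of the prefactor $\sqrt{2\pa_n f(z_{i,j})}$: it arises from solving the leading transport equation for the eigen-$1$-form subject to the tangential Dirichlet condition (vanishing tangential trace but free normal component), together with the global $L^2_w$-normalization, which is the delicate boundary calculation already carried out in \cite{HeNi1,DLLN-saddle1}.
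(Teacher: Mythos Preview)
The paper does not give its own proof of this statement: it simply records it as a specialization of \cite[Proposition~65]{DLLN-saddle1}. Your proposal is a faithful outline of the argument behind that result, and the three-step strategy you describe (reduction from $\psi_{i_j}$ to $\widetilde\psi_{i_j}$ via the Gram-matrix and projector estimates, disposal of the negligible cases by support and Agmon-type localization, and a WKB/Laplace computation on $\pa\Omega$ for the main case) is exactly the route taken in \cite{DLLN-saddle1}.

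Two small points worth tightening. First, your displayed identity for $\psi_{i_j}-\widetilde\psi_{i_j}$ is not literally correct: since $\mathcal B_1=\widetilde\Psi\,G_1^{-1/2}$ acts as a matrix on the whole family, the term $(G_1^{-1/2}-I)\pi_h^{(1)}\widetilde\psi_{i_j}$ should be replaced by the corresponding linear combination of \emph{all} the $\pi_h^{(1)}\widetilde\psi_{k_l}$'s with $\mathcal O(e^{-c/h})$ coefficients. This does not change the conclusion, since each $\|\pi_h^{(1)}\widetilde\psi_{k_l}\|_{H^1_w}$ is at worst polynomial in $h^{-1}$. Second, when you apply the trace inequality to $e^{-f/h}(\psi_{i_j}-\widetilde\psi_{i_j})$, differentiating the weight costs a factor $h^{-1}$; this polynomial loss is harmless against the exponential gain, but should be mentioned. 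With these cosmetic fixes your sketch is sound.
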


%\begin{proof}
%From the item 1.b) in 
%Proposition~\ref{pr.Schuss}, \eqref{eq.G1}, and \eqref{eq.B0-B1}, there exists $c>0$
%such that
%for every $(i,j)\in\bigcup_{p=1}^3 \{p\}\times \{1,\dots, \ft n_{p}\}$, it holds
%  when $h\to 0$:
%$$
% \psi_{i_{j}}\ =\ \widetilde \psi_{i_{j}} + \sum_{(k,\ell)}\mathcal O(e^{-\frac ch})
% \widetilde \psi_{k_{\ell}} + \mathcal O(e^{-\frac ch})\ \ \text{in}\ \ \Lambda^{1} H^1_w\,.
%$$
%The statement of Proposition~\ref{co.bdy-estim} then follows easily from the above estimate
%together with the last item in Proposition~\ref{pr.Schuss} and the trace theorem (for boundary intergrals).
%\end{proof}

   \subsection{Proof of Theorem \ref{th.main}}
   \label{se-t1}
   In this Section, one proves Theorem~\ref{th.main}. To this end, let us assume that the  hypotheses  \textbf{[H-Well]} and \textbf{[H1]}, with~\eqref{h1a},  are satisfied. Then, from \eqref{lh1},
 \eqref{veh}, and \eqref{alpha1},  one has in the limit $h\to 0$:
\begin{itemize}%[leftmargin=0.9cm,rightmargin=0.7cm]
\item when  $\pa \ft C_{1}\cap \pa \ft C_{2}=\emptyset$, it holds $\ve(h)=\mathcal O\big(e^{-\frac ch})$ for some $c>0$ and then, for every $i\in\{1,2\}$,
\begin{equation}
\label{eq.vp1}
2\sqrt h\,e^{\frac 2h H}\, \lambda_{i}(h)
=\alpha_{i}(h)+\mathcal O\big(e^{-\frac ch})
\sim  \sum_{k=0}^{+\infty} \kappa_{i,k} h^k\,,
\end{equation}
\item when  $\pa \ft C_{1}\cap \pa \ft C_{2}\neq \emptyset$, it holds $\ve(h)\eqsim \sqrt h$
and then, for every $i\in\{1,2\}$,
\begin{equation}
\label{eq.vp1'}
2\sqrt h\,e^{\frac 2h H}\, \lambda_{i}(h)
=\alpha_{i}(h)+\mathcal O\big(h)
=\kappa_{i,0}\,    \big(  1  +  \mathcal    O(\sqrt h )   \big),
\end{equation}
where,    the  remainder term $\mathcal O\big (\sqrt h\big )$ in~\eqref{eq.vp1'}  admits  a full 
asymptotic expansion in   $\sqrt h$.
\end{itemize}
Moreover, there exists $h_{0}>0$ such that for all $h\in (0,h_0)$,
   \begin{equation}\label{uh-h10}
   u_h =\pm\left(\frac{1}{\sqrt{1+\beta(h)^2}} \,   \varphi_1 + \frac{\beta(h)}{\sqrt{1+\beta(h)^2}}\, \varphi_2\right),  
   \end{equation}
   where $\beta(h)$ is defined in~\eqref{betah} and $(\varphi_1,\varphi_2)$ is defined in~\eqref{eq.B0-B1} (notice that~\eqref{uh-h10} holds in $H^1_w(\Omega)$).   
   Indeed, this is simply the relation \eqref{equa-uh}
   when $\varepsilon(h)\neq 0$. In addition, when $\varepsilon(h)=0$ and
  $\alpha_{2}(h)>\alpha_{1}(h)$ (the latter relation follows from \eqref{h1a}), it 
 holds 
  $u_{h}=\pm \varphi_{1}$, 
 that is precisely the  relation \eqref{uh-h10}
 since in this case $\beta(h)$ is well defined and 
    $\beta(h)=0$  (see indeed \eqref{betah}).\medskip
   
 \noindent
 Since \textbf{[H1]} implies that
   $\lim_{h\to 0}\  \frac{\ve(h)}{\alpha_1(h)-\alpha_2(h)}=0$,
   one moreover obtains from~\eqref{betah} 
   %together with \eqref{h1-cb} 
   that
%   and the fact that $\big\vert\alpha_2(h)-\alpha_1(h) \big\vert=\alpha_2(h)-\alpha_1(h) $ (which follows from~\eqref{h1a}),  
    in the limit $h\to 0$: 
   % for some $c>0$ independent of $h$,
   \begin{equation}\label{beta-h1}
   \beta(h)=\mathcal O\left ( \frac{\vert \ve(h)\vert }{\alpha_2(h)-\alpha_1(h)}  \right).
    \end{equation}  
From~\eqref{uh-h10}, \eqref{beta-h1}
together  with $u_h>0$ on $\Omega$, $\widetilde u_1\geq 0$ on $\Omega$, \eqref{eq.G0}, and \eqref{eq.B0-B1}, 
 one has, for every $h$ small enough: $\big \lp     u_h,   \widetilde u_1  \big \rp_{L^2_w}=  1+o(1)$ and then
       \begin{equation}\label{uh-debut}
  u_h= \big (1+ \mathcal O\left ( \beta(h)^2 \right) \big) \,  \varphi_1\, + \,\mathcal  O\big (\vert\beta(h)\vert\big )\,  \varphi_2.
   \end{equation}
Therefore, using \eqref{eq.G0}, \eqref{eq.B0-B1},   and~\eqref{beta-h1}, there exists $c>0$ such that for every  $h$ small enough:
   \begin{equation}\label{uh-final-1}
     u_h = \left (1+ \mathcal O\big( \beta(h)^2 \big) \right) \,   \widetilde u_1\, + \,\mathcal  O\big (\vert\beta(h)\vert\big )\,  \widetilde u_2\, +\,  \mathcal O\big (e^{-\frac ch}\big )  \ \text{  in } L^2_w(\Omega).
        \end{equation}       
        From \eqref{uh-final-1},  one deduces the following proposition  which implies, using in addition  \eqref{nuh} and~\eqref{beta-h1}, the asymptotic estimates~\eqref{eq.QSD-x1-0} and \eqref{eq.QSD-x1}
        in Theorem~\ref{th.main}.
\begin{proposition}
\label{pr.proj}
Let us assume that the  hypotheses  \textbf{[H-Well]} and \textbf{[H1]} together with
\eqref{h1a} are satisfied. Let  $u_h$ be the principal eigenfunction of $L_{f,h}^{D,(0)}$ satisfying \eqref{uh.norma} and $(\widetilde u_1,\widetilde u_2)$ be the functions  introduced in~\eqref{eq.chi1}.
Then, for every open set $\ft O\subset \Omega$ and  for every $h>0$ small enough:
\begin{enumerate}
\item[i)] When $\ft O\cap \{x_{1},x_{2}\}=\{x_{1}\}$, one has
\begin{align*}
\int  _{\ft O} u_h \ e^{- \frac{2}{h} f } &= \left (1+\mathcal  O\left ( \beta(h)^2 \right) +\mathcal O  \big (e^{-\frac{c}{h} } \big)\right) \, \int  _{\ft O} \widetilde u_1 \ e^{- \frac{2}{h} f } \\
&=\frac{(h \pi)^{\frac{d}{4} }}{\big(\det \Hess f(x_{1})\big)^{\frac14}}e^{-\frac 1h \min_{\overline\Omega}f}
\ \big(1+\mathcal  O\left ( \beta(h)^2 \right) + \mathcal O(h)\big),
\end{align*}
where     $c>0$ is independent of $h$  and $\beta(h)$  satisfies~\eqref{beta-h1}. 
\item[ii)] When $\ft O\cap \{x_{1},x_{2}\} =\{x_{2}\}$,  it holds 
$$
\displaystyle \int  _{\ft O} u_h \ e^{- \frac{2}{h} f } = h^{\frac d4}\,e^{-\frac 1h \min_{\overline\Omega}f} \ \mathcal O\left (\vert \beta(h)\vert+  e^{-\frac{c}{h}} \right)
,$$
 where we recall~$\beta(h)$  satisfies~\eqref{beta-h1} and  $c>0$ is independent of $h$. 
\item[iii)] When $\ft O\cap \{x_{1},x_{2}\} =\emptyset$, it holds 
$$
\displaystyle \int  _{\ft O} u_h \ e^{- \frac{2}{h} f } = \mathcal O\left (e^{-\frac 1h( \min_{\overline\Omega}f+c)}\right ), \, \text{  where $c>0$ is independent of $h$}.
$$  
\end{enumerate}

\end{proposition}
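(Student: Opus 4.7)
The plan is to substitute the $L^{2}_{w}$-expansion~\eqref{uh-final-1} of $u_{h}$ into the integral $\int_{\ft O}u_{h}\,e^{-\frac{2}{h}f}$ and to evaluate each of the three resulting pieces independently. Writing
\begin{equation*}
\int_{\ft O}u_{h}\,e^{-\frac{2}{h}f}\ =\ \bigl(1+O(\beta(h)^{2})\bigr)\int_{\ft O}\widetilde u_{1}\,e^{-\frac{2}{h}f}\ +\ O(|\beta(h)|)\int_{\ft O}\widetilde u_{2}\,e^{-\frac{2}{h}f}\ +\ \int_{\ft O}r_{h}\,e^{-\frac{2}{h}f}
\end{equation*}
with $\|r_{h}\|_{L^{2}_{w}}=O(e^{-c/h})$, the three cases (i)--(iii) will follow from Laplace's method applied to the two integrals $\int_{\ft O}\widetilde u_{k}\,e^{-\frac{2}{h}f}$ together with a Cauchy-Schwarz estimate of the $r_{h}$-remainder.

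First I would analyse $\int_{\ft O}\widetilde u_{k}\,e^{-\frac{2}{h}f}=\|\chi_{k}\|_{L^{2}_{w}}^{-1}\int_{\ft O}\chi_{k}\,e^{-\frac{2}{h}f}$, using that by the construction of $\chi_{1},\chi_{2}$ in~\eqref{eq.chi1}, the cutoff parameters can be chosen small enough so that each $f|_{\supp\chi_{k}}$ has $x_{k}$ as its unique nondegenerate global minimum. Laplace's method at $x_{k}$ then yields
\begin{equation*}
\|\chi_{k}\|_{L^{2}_{w}}^{2}=\frac{(h\pi)^{d/2}}{(\det\Hess f(x_{k}))^{1/2}}\,e^{-\frac{2}{h}\min_{\overline\Omega}f}\bigl(1+O(h)\bigr),
\end{equation*}
and, when $x_{k}\in\ft O$ (so that $\ft O$ contains an open neighborhood of $x_{k}$ on which $\chi_{k}\equiv 1$), the same Laplace expansion produces the identical leading term for $\int_{\ft O}\chi_{k}\,e^{-\frac{2}{h}f}$; after division one gets the announced equivalent $\int_{\ft O}\widetilde u_{k}\,e^{-\frac{2}{h}f}=\frac{(h\pi)^{d/4}}{(\det\Hess f(x_{k}))^{1/4}}\,e^{-\frac{1}{h}\min_{\overline\Omega}f}(1+O(h))$. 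Conversely, when $x_{k}\notin\overline{\ft O}$ one has $\inf_{\overline{\ft O}\cap\supp\chi_{k}}f>f(x_{k})$, and a standard Laplace decay estimate away from the unique global minimum will give $\int_{\ft O}\widetilde u_{k}\,e^{-\frac{2}{h}f}=h^{d/4}\,e^{-\frac{1}{h}\min_{\overline\Omega}f}\,O(e^{-c/h})$ for some $c>0$.

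The $L^{2}_{w}$-remainder is then controlled via Cauchy-Schwarz:
\begin{equation*}
\Bigl|\int_{\ft O}r_{h}\,e^{-\frac{2}{h}f}\Bigr|\leq\|r_{h}\|_{L^{2}_{w}}\Bigl(\int_{\Omega}e^{-\frac{2}{h}f}\Bigr)^{1/2}=O(e^{-c/h})\cdot O\bigl(h^{d/4}\,e^{-\frac{1}{h}\min_{\overline\Omega}f}\bigr),
\end{equation*}
the last factor again being a Laplace estimate at $\{x_{1},x_{2}\}$. Combining these ingredients in each configuration then yields the three claims: in case (i), with $x_{1}\in\ft O$ and $x_{2}\notin\overline{\ft O}$, the $\widetilde u_{1}$-integral produces the leading term while both $O(|\beta(h)|)\int_{\ft O}\widetilde u_{2}\,e^{-\frac{2}{h}f}$ and the $r_{h}$-term contribute relative errors $O(e^{-c/h})$ (using $|\beta(h)|\leq C$), which together with the prefactor $(1+O(\beta(h)^{2}))$ reproduces the announced multiplicative error $O(\beta(h)^{2})+O(e^{-c/h})$; case (ii) is symmetric with the roles of $\widetilde u_{1}$ and $\widetilde u_{2}$ reversed, the dominant contribution being then the $O(|\beta(h)|)$-coefficient of $\int_{\ft O}\widetilde u_{2}\,e^{-\frac{2}{h}f}$; and case (iii) is immediate since all three pieces are then individually of size $h^{d/4}\,e^{-\frac{1}{h}(\min f+c)}$.

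The only mildly delicate point is the exponential smallness $\int_{\ft O}\widetilde u_{k}\,e^{-\frac{2}{h}f}=h^{d/4}\,e^{-\frac{1}{h}\min f}\,O(e^{-c/h})$ in the case $x_{k}\notin\overline{\ft O}$: it relies on fixing $\chi_{1},\chi_{2}$ at the start with parameters small enough for $f|_{\supp\chi_{k}}$ to have a unique nondegenerate global minimum at $x_{k}$, so that the bound reduces to a classical Laplace estimate on $\int_{\supp\chi_{k}\setminus V}e^{-\frac{2}{h}f}$ for some neighborhood $V$ of $x_{k}$ disjoint from $\ft O$, the constant $c>0$ being controlled by $\dist(x_{k},\overline{\ft O})$. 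Beyond this, every step is a direct substitution.
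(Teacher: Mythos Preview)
Your proof is correct and follows essentially the same approach as the paper: substitute the $L^2_w$-expansion~\eqref{uh-final-1} into the integral, evaluate the pieces $\int_{\ft O}\widetilde u_{k}\,e^{-\frac{2}{h}f}$ via Laplace's method at $x_{k}$, and control the $L^2_w$-remainder by Cauchy--Schwarz. The paper's proof condenses exactly these steps into the Laplace formula~\eqref{eq.laplace} and the sentence that the statement ``follows easily''; your version is simply more explicit about the Cauchy--Schwarz step and about choosing $\chi_{k}$ so that $x_{k}$ is the unique global minimum of $f$ on $\supp\chi_{k}$.
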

\begin{proof}
The relation \eqref{uh-final-1} leads to
\begin{align*}
\int  _{\ft O}u_h \ e^{-\frac{2}{h} f} = \left  (1+\mathcal  O\left ( \beta(h)^2 \right) \right) \, \int  _{\ft O}   \widetilde u_{1}\ e^{- \frac{2}{h} f }&\, + \, 
\mathcal  O\left (\vert \beta(h) \vert\right)\int  _{\ft O}   \widetilde u_{2}\ e^{- \frac{2}{h} f }\\
&\,\qquad +   \mathcal O\Big (e^{-\frac 1h( \min_{\overline\Omega}f+c)}\Big ),
\end{align*}
where $c>0$ is independent of $h$. 
In addition, one has, for $ i\in\{1,2\}$, $\tilde u_{i}=\frac{\chi_{i}}{\|\chi_{i} \|_{L_{w}^{2}}}$ from \eqref{eq.chi1} and it follows from the  Laplace method that there exists $c>0$ such that for any $k\in\{1,2\}$,  when  $h\to 0$,
  \begin{equation}\label{eq.laplace}
\ \int  _{\ft O}\chi^{k}_{i}\, e^{- \frac{2}{h} f }= \begin{cases} 
\frac{\displaystyle (h \, \pi)^{\frac{d}{2} }}{\displaystyle\big(\det \Hess f(x_{i})\big)^{\frac12}}e^{-\frac 2h\min_{\overline\Omega}f  }\big(1+\mathcal O(h)\big)  &\text{ if } x_{i} \in \ft O \\
\mathcal O\Big (\displaystyle e^{-\frac 2h( \min_{\overline\Omega}f+c)}\Big ) &\text{ if } x_{i} \notin \ft O.
\end{cases}
  \end{equation}
The statement of Proposition~\ref{pr.proj} follows easily. 
\end{proof}

\noindent
We also deduce from \eqref{uh-debut} and 
Proposition~\ref{interaction1} together with  \eqref{beta-h1}
 the following estimates.  

\begin{proposition} \label{lemm2} 
Let us assume that the  hypotheses  \textbf{[H-Well]} and \textbf{[H1]} together with \eqref{h1a}
are satisfied.
Let  $u_h$ be the principal eigenfunction of $L_{f,h}^{D,(0)}$ satisfying \eqref{uh.norma}. Let also~$(\widetilde u_{k})_{k\in\{1,2\}}$ and  $ \big (\widetilde \psi_{i_{j}} \big )_{(i,j)\in\bigcup_{p=1}^3 \{p\}\times \{1,\dots, \ft n_{p}\}}$ be as in  Proposition~\ref{pr.Schuss}, and $ \big (\psi_{i_{j}} \big )_{(i,j)\in\bigcup_{p=1}^3 \{p\}\times \{1,\dots, \ft n_{p}\}}$ be  defined by~\eqref{eq.B0-B1}. Then, there exists $c>0$ such that  in the limit $h\to0$:
\begin{enumerate}
\item[i)] For every  $j\in\{1,\dots,\ft n_{1}\}$,
  \begin{align*}
   \big \lp    \nabla   u_h,   \psi_{1_{j}}  \big \rp_{L^2_w}
%   &=   \big \langle \nabla  \widetilde u_1, \widetilde \psi_{1_j}  \big \rangle_{L^2_w}  \left (1+ \mathcal O\left ( \beta(h)^2 \right) +\mathcal O  \big (e^{-\frac{c}{h} } \big)\right) \\
   &=-C_{1,j,1} h^{-\frac34}\,  e^{-\frac Hh }   \big (1 +  \mathcal O\left ( \beta(h)^2 \right)+  \mathcal   O(h )   \big),
\end{align*} 
where $C_{1,j,1} $ is defined in~\eqref{eq.C} and~$\beta(h)$  satisfies~\eqref{beta-h1}. 
\item[ii)] For every  $j\in\{1,\dots,\ft  m_{3}\}$, \begin{align*}
   \big \lp    \nabla   u_h,   \psi_{3_j}  \big \rp_{L^2_w}
%   &=    \big \langle \nabla  \widetilde u_1, \widetilde \psi_{3_j}  \big \rangle_{L^2_w}\, \left(\,1+\mathcal O\big ( \vert\beta(h)\vert\big ) +\mathcal O  (e^{-\frac{c}{h} } )\right) \\
   &=\mathcal O\big(h^{-\frac12}\,  e^{-\frac Hh}\big),
\end{align*} 
\item[iii)]  When $i=2$ and $j\in\{1,\dots,\ft n_{2}\}$ or, $i=3$ and $j\in \{ \ft  m_{3}+1, \dots, \ft n_3\} $,
  \begin{equation*}
   \big \lp    \nabla    u_h,   \psi_{i_j}  \big \rp_{L^2_w}   =h^{-\frac34}\, e^{-\frac Hh}   \, \mathcal O\left(  \vert\beta(h)\vert+  e^{-\frac ch}  \right ).
  \end{equation*} 
  \end{enumerate}
\end{proposition}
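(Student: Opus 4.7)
The plan is to project the identity~\eqref{uh-debut},
$$u_h \;=\; \big(1+\mathcal O(\beta(h)^2)\big)\,\varphi_1 \;+\; \mathcal O(|\beta(h)|)\,\varphi_2\,,$$
through $\nabla$ and take the $L^2_w$-inner product with each $\psi_{i_j}$. By bilinearity of $\lp\cdot,\cdot\rp_{L^2_w}$ this yields
$$\lp \nabla u_h,\psi_{i_j}\rp_{L^2_w} \;=\; \big(1+\mathcal O(\beta(h)^2)\big)\lp \nabla \varphi_1,\psi_{i_j}\rp_{L^2_w} \,+\, \mathcal O(|\beta(h)|)\,\lp \nabla \varphi_2,\psi_{i_j}\rp_{L^2_w},$$
and one is reduced to plugging in the sharp estimates of Proposition~\ref{interaction1} case by case, using in addition the bound $|\beta(h)|=\mathcal O(1)$ that follows from~\eqref{beta-h1} and the assumption \textbf{[H1]} (the latter ensures $\beta(h)\to 0$, hence $|\beta(h)|\lesssim 1$).

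For item~$i)$ ($i=1$, $j\in\{1,\dots,\ft n_1\}$), item~$i)$ of Proposition~\ref{interaction1} with $k=1$ provides the main contribution $-C_{1,j,1}\,h^{-3/4}e^{-H/h}(1+\mathcal O(h))$, whereas item~$ii)$ applied with $p=1$, $k=2$ gives an exponentially small cross term $\lp\nabla\varphi_2,\psi_{1_j}\rp_{L^2_w}=\mathcal O(e^{-(H+c)/h})$. Multiplied by $\mathcal O(|\beta(h)|)=\mathcal O(1)$, the cross term is of size $h^{-3/4}e^{-H/h}\cdot\mathcal O(h)$ and can be absorbed into the $\mathcal O(h)$ remainder. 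For item~$ii)$ ($i=3$, $j\in\{1,\dots,\ft m_3\}$), item~$iii)$ of Proposition~\ref{interaction1} applied to both $k=1$ and $k=2$ gives that each $\lp \nabla\varphi_k,\psi_{3_j}\rp_{L^2_w}$ is of order $h^{-1/2}e^{-H/h}$, and summing yields the announced $\mathcal O(h^{-1/2}e^{-H/h})$ bound. For item~$iii)$ with $(i,j)=(2,j)$, the main contribution comes from the $\mathcal O(|\beta(h)|)\,\lp\nabla\varphi_2,\psi_{2_j}\rp_{L^2_w}$ term estimated via item~$i)$ of Proposition~\ref{interaction1}, while the first term is exponentially small by item~$ii)$ (with $p=2$, $k=1$); for $i=3$ with $j\in\{\ft m_3+1,\dots,\ft n_3\}$ both factors $\lp\nabla\varphi_k,\psi_{3_j}\rp_{L^2_w}$ are $\mathcal O(e^{-(H+c)/h})$ by item~$iv)$. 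In both sub-cases, noting that $\mathcal O(e^{-(H+c)/h}) = h^{-3/4}e^{-H/h}\cdot\mathcal O(e^{-c'/h})$ for any $0<c'<c$, one lands on the claimed $h^{-3/4}e^{-H/h}\,\mathcal O(|\beta(h)|+e^{-c/h})$.

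Since every ingredient has already been established in Propositions~\ref{pr1} and~\ref{interaction1}, no serious obstacle is expected; the argument is essentially a linear-algebraic expansion. The only point requiring care is the bookkeeping in item~$i)$: it is crucial to observe that the cross term is \emph{exponentially} smaller than the main contribution, and not merely of comparable order, so that multiplication by the $\mathcal O(|\beta(h)|)$ coefficient produces a genuine $\mathcal O(h)$ error rather than a $\mathcal O(|\beta(h)|)$ error. This is precisely what makes item~$i)$ quantitatively stronger than item~$iii)$, and reflects the fact that $\varphi_1$ is localized in the well $\ft C_1$ whose boundary saddle points are exactly the $z_{1,j}$'s.
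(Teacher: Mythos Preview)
Your argument is correct and is precisely the approach the paper takes: the paper simply states that the proposition follows from \eqref{uh-debut}, Proposition~\ref{interaction1}, and \eqref{beta-h1}, and you have spelled out this one-line deduction case by case. The bookkeeping you highlight in item~$i)$ (the cross term being exponentially small and hence absorbed in the $\mathcal O(h)$) is exactly the right point to note.
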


\noindent
We are now in position to prove Theorem~\ref{th.main}.

\begin{proof}[End of the proof Theorem~\ref{th.main}]
To conclude the proof Theorem~\ref{th.main}, it remains to prove~\eqref{t-1},~\eqref{t-2} and~\eqref{t-3}. Let assume that \textbf{[H-Well]} and \textbf{[H1]} hold with~\eqref{h1a} and let us consider $F\in L^\infty(\pa \Omega,\mathbb R)$. 
Let us recall that from \eqref{eq.loi-bord}, one has
$$
 \mathbb E^{\nu_h} \left [ F(X_{\tau_{\Omega}} )\right]=- \frac{h}{2\lambda_1(h)}  \frac{\displaystyle \int_{\partial \Omega}F\,\partial_n u_h  \ e^{-\frac{2}{h}  f}  }{\displaystyle \int_\Omega u_h\,  e^{-\frac{2}{h}  f}}.
$$
Sharp asymptotic estimates when $h\to 0$ of  $\lambda_1(h)$ and  $\displaystyle\int_\Omega u_h\,  e^{-\frac{2}{h}  f}$ are respectively given in~\eqref{eq.vp1}, \eqref{eq.vp1'}  and in Proposition~\ref{pr.proj}. Therefore, to prove~\eqref{t-1},~\eqref{t-2} and~\eqref{t-3}, it only remains to estimate  when $h\to 0$, for an open subset $\Sigma$ of $\pa \Omega$,   the term $\displaystyle\int_{\Sigma}  F \,  \partial_{n}u_h\,  e^{-\frac{2}{h} f}$. 
\medskip

\noindent
Since the family $(\psi_{i_{j}})_{(i,j)\in\bigcup_{p=1}^3 \{p\}\times \{1,\dots, \ft n_{p}\}}$ introduced in~\eqref{eq.B0-B1} is an orthonormal 
basis of  $\range   \pi_h^{(1)}$,  it holds when $h\to 0$,  from the Parseval identity and from 
Propositions~\ref{co.bdy-estim} and \ref{lemm2},
\begin{align}
\label{eq.dec-uh}
\int_{\Sigma}  F \,  \partial_{n}u_h\,  e^{-\frac{2}{h} f}  &= \sum \limits_{(i,j)\in\bigcup_{p=1}^3 \{p\}\times \{1,\dots, \ft n_{p}\}} \big \langle \nabla u_h , \psi_{i_j} \big \rp_{L^2_w}  \int_{\Sigma}  F \,  \psi_{i_j} \cdot n \  e^{- \frac{2}{h}  f}\\
\nonumber
&=\sum \limits_{(i,j)\in\bigcup_{p=1}^2 \{p\}\times \{1,\dots, \ft n_{p}\}}  \big\langle \nabla u_h , \psi_{i_j} \big \rp_{L^2_w}  \int_{\Sigma}  F \,  \psi_{i_j}\cdot n \  e^{- \frac{2}{h}  f}\\ 
&\qquad\qquad\qquad\qquad\qquad\qquad\, +\, \mathcal O \left(e^{-\frac 1h(\min_{\pa \Omega} f+H+c)} \right),
\end{align}
for some $c>0$ independent of $h$. 
When $\overline{\Sigma}$ does not contain any of the $z_{i,j}$'s for $(i,j)\in\bigcup_{p=1}^2 \{p\}\times \{1,\dots, \ft n_{p}\}$,
one has, using again Propositions~\ref{co.bdy-estim} and \ref{lemm2}:
\begin{equation}
\label{eq.bdy1}
\int_{\Sigma} F\,\partial_{n}u_h   e^{-\frac{2}{h} f} = \mathcal O \left(e^{-\frac 1h(\min_{\pa \Omega} f+H+c)} \right),
\end{equation}
\noindent
where  $c>0$ is independent of $h$.\\  Assume now that  $\overline{\Sigma}$ does not contain any of the $z_{1,j}$'s for $j\in\{1,\dots,\ft n_{1}\}$.
One then has in the limit $h\to 0$, using Propositions~\ref{co.bdy-estim} and \ref{lemm2} and defining $H':=\min_{\pa \Omega} f+H$:
\begin{align}
\nonumber
\int_{\Sigma} F\, \partial_{n}u_h\,   e^{-\frac{2}{h} f} &= \sum \limits_{j=1}^{\ft n_{1}}\mathcal O\left(h^{-\frac 34}  e^{-\frac Hh  }\right)\mathcal  O \left(e^{-\frac 1h (\min_{ \pa\Omega} f+c)} \right) +\mathcal O \left(e^{-\frac 1h(H'+c)} \right)
\\ 
\nonumber
&\qquad+\sum_{j=1}^{\ft n_{2}}\mathcal O\left( h^{-\frac{3}{4}}\, \vert \beta(h)\vert\,   e^{-\frac Hh }   \right )
\mathcal  O\left( h^{\frac{d-3}4} e^{-\frac 1h \min_{\pa \Omega} f} \right)\\
\label{eq.bdy2}
&=\mathcal O \left(e^{-\frac 1h(H'+c)} \right)+\mathcal  O\left( h^{\frac{d-6}{4}}  \vert\beta(h)\vert   e^{-\frac {H'}h} \right ),
\end{align}
where  $c>0$ is independent of $h$.\\ 
Finally, let us assume  that $ \Sigma\cap\{z_{1,1},\dots,z_{1,\ft n_{1}}\}=\{z_{1,j}\}$ and  $F$ is $C^{\infty}$ around~$z_{1,j}$. One then has, 
using~\eqref{eq.C}, Propositions~\ref{co.bdy-estim} and \ref{lemm2}:
\begin{align}
\nonumber
\int_{\Sigma} F\, \partial_{n}u_h\,   e^{-\frac{2}{h} f} &= \big\langle \nabla u_h , \psi_{1_j} \big\rp_{L^2_w} \int_{\Sigma}  F \,  \psi_{1_{j}} \cdot n \  e^{- \frac{2}{h}  f} +
h^{\frac{d-6}{4}} e^{-\frac {H'}h}\, \mathcal O\left(  \vert  \beta(h)\vert + e^{-\frac ch} \right )\\
\nonumber
&=
 -    \frac{2\,  \pi ^{\frac{d-2}{4}}\, \partial_n f(z_{1,j})\big(\det \Hess f(x_{1})\big)^{\frac14}}{\big( \det \Hess f |_{ \partial \Omega}(z_{1,j})   \big)^{\frac12}} \, h^{\frac{d-6}{4}}\,  e^{-\frac {H'}h}    \\
 \label{eq.bdy3} 
 &\quad \times  \left (F(z_{1,j})+\mathcal O\big (\vert\beta(h)\vert  +h )   \right).
\end{align}
The estimates~\eqref{t-1},~\eqref{t-2} and~\eqref{t-3},  follows from 
 \eqref{eq.loi-bord} and
\eqref{eq.bdy1}--\eqref{eq.bdy3}, using in addition \eqref{beta-h1}, 
\eqref{eq.vp1}, \eqref{eq.vp1'}, and Proposition~\ref{pr.proj}.  This concludes the proof of Theorem~\ref{th.main}.  
\medskip
   \end{proof}

   %%%%%%%%
   \subsection{Proofs of Theorems~\ref{th.main2} and~\ref{pr.main}}
 Let us assume in this section that the  hypotheses  \textbf{[H-Well]} and \textbf{[H2]} are satisfied. We recall that \textbf{[H2]}  means that  there exists $h_0>0$ such that for all $h\in (0,h_0)$, 
 \begin{equation}
\label{h2-lim}
\ve(h)\neq 0\ \ \text{and}\ \ \lim_{h\to 0}\  \frac{\alpha_1(h)-\alpha_2(h)}{\ve(h)}=0.
\end{equation}
We then deduce from~\eqref{lh1} the following:     
\begin{itemize}%[leftmargin=0.9cm,rightmargin=0.7cm]
\item when  $\pa \ft C_{1}\cap \pa \ft C_{2}=\emptyset$, using in addition~\eqref{alpha1} and the fact that $\ve(h)=\mathcal O\big(e^{-\frac ch})$ for some $c>0$ (see~\eqref{veh}), it holds when $h\to 0$:
\begin{equation}
\label{eq.l1=l2}
\lambda_{1}(h)=\lambda_2(h)\big (1+ \mathcal O\big(e^{-\frac ch})\big),
\end{equation}
and 
\begin{equation}
\label{lh1-h2-c1}
2\sqrt h\,e^{\frac 2h H}\,\lambda_{1}(h)\sim  \sum_{k=0}^{+\infty} \kappa_{1,k} h^k,
\end{equation}
\item when  $\pa \ft C_{1}\cap \pa \ft C_{2}\neq \emptyset$, using  in addition~\eqref{alpha1}, the fact that $\ve(h) \eqsim \sqrt h$ (see~\eqref{veh}) and $\kappa_{1,0}=\kappa_{2,0}$ (see~\eqref{eq1},~\eqref{eq2} and~\eqref{aj}) it holds for $i\in\{1,2\}$ when $h\to 0$:
\begin{equation}
\label{lh1-h2-c2}
\lambda_{i}(h)=\kappa_{1,0} \frac{\displaystyle  e^{-\frac 2h  H }}{2\sqrt h}\,    \big(  1  +  \mathcal    O(\sqrt h )   \big),
\end{equation}
where,    the  remainder term $\mathcal O\big (\sqrt h\big )$ in~\eqref{lh1-h2-c2}  admits  a full 
asymptotic expansion in   $\sqrt h$.
\end{itemize}

 \begin{remark}
\label{re.sym1}
When there exists an isometry
$ \Phi:\overline\Omega\to\overline\Omega$ 
satisfying \eqref{eq.sym}, i.e. such that $\Phi(x_{1})=x_{2}$, $f\circ \Phi=f$, and $\Phi^{2}=I$,
it necessarily holds $\ft n_{1}=\ft n_{2}$ and $\Phi(\{z_{1,1},\dots,z_{1,\ft n_{1}}\})=\{z_{2,1},\dots,z_{2,\ft n_{2}}\}$. 
For every $h>0$ small enough,
it follows moreover from the simplicity of the eigenvalues $\lambda_{1}(h)$ and $\lambda_{2}(h)$
(see Remark~\ref{re.mu}) and from
the positivity of $u_{h}$ in $\Omega$ that $u_{h}\circ \Phi=u_{h}$ and $u_{2,h}\circ \Phi=-u_{2,h}$,
where $u_{2,h}$ denotes any eigenvector of $L_{f,h}^{D,(0)}$ associated with $\lambda_{2}(h)$. 
In addition,
one can choose $\chi_{1}$ and $\chi_{2}$ such that $\chi_{2}=\chi_{1}\circ \Phi$ in \eqref{eq.chi1}. This leads, for $h$ small enough,
to $\pi_h^{(0)}\widetilde  u_{1}+\pi_h^{(0)}\widetilde  u_{2}\in \sspan(u_{h})$, 
$\pi_h^{(0)}\widetilde  u_{1}-\pi_h^{(0)}\widetilde  u_{2}\in \sspan(u_{2,h})$
and hence to 
$$
\lp \pi_h^{(0)}  \widetilde  u_{1},  \pi_h^{(0)} \widetilde  u_{1} \rp_{L^2_w}=
\lp \pi_h^{(0)}  \widetilde  u_{2},  \pi_h^{(0)} \widetilde  u_{2} \rp_{L^2_w}
$$
and
$$
\lp L_{f,h}^{D,(0)}\pi_h^{(0)}  \widetilde  u_{1},  \pi_h^{(0)} \widetilde  u_{1} \rp_{L^2_w}=
\lp L_{f,h}^{D,(0)}\pi_h^{(0)}  \widetilde  u_{2},  \pi_h^{(0)} \widetilde  u_{2} \rp_{L^2_w}.
$$
It then follows from \eqref{eq.G0}, \eqref{eq.L}, and \eqref{L} that for $h$ small enough,
$\alpha_{1}(h)=\alpha_{2}(h)$ and hence, using $\lambda_{1}(h)\neq \lambda_{2}(h)$, that $\varepsilon(h)\neq 0$.
   The relation \eqref{h2-lim} is thus in particular satisfied in this situation. 
\end{remark}

\noindent
Moreover, there exists $h>0$ such that for all $h\in (0,h_0)$,
   \begin{equation}\label{uh-h2}
   u_h =\pm\left (\frac{1}{\sqrt{1+\beta(h)^2}} \,   \varphi_1 + \frac{\beta(h)}{\sqrt{1+\beta(h)^2}}\, \varphi_2\right),  
   \end{equation}
   where $\beta(h)$ is defined in~\eqref{betah} and $(\varphi_1,\varphi_2)$ is defined in~\eqref{eq.B0-B1}.  This
  is indeed simply  \eqref{equa-uh} since $\varepsilon(h)\neq 0$ according to \eqref{h2-lim}. 
 Using~\eqref{h2-lim} 
  and \eqref{betah}, one obtains moreover that when $h\to 0$:
      \begin{equation}\label{beta-h2}
   \beta(h)=-\frac{\vert \ve(h)\vert}{\ve(h)}+\mathcal  O\left ( \frac{\vert \alpha_2(h)-\alpha_1(h)\vert}{\vert \ve(h)\vert }  \right). 
   \end{equation}  
From~\eqref{uh-h2}, \eqref{beta-h2}
together  with $u_h>0$ on $\Omega$, $\widetilde u_1,\widetilde u_2\geq 0$  on $\Omega$, \eqref{eq.G0}, and \eqref{eq.B0-B1}, 
 one has, for every $h$ small enough, $\big \lp     u_h,   \widetilde u_1  \big \rp_{L^2_w}=  \frac{1}{\sqrt 2}+o(1)$
 and $ 0<\big \lp     u_h,   \widetilde u_2  \big \rp_{L^2_w}= - \frac{\vert \ve(h)\vert}{\sqrt2\, \ve(h)}+o(1)$.
 It follows that for every $h$ small enough:
$\ve(h)<0$,
\begin{equation}\label{beta-h2-b}
   \beta(h)=1+ \mu(h)\,,  \text{ where } \, \mu(h)=\mathcal  O\left ( \frac{\vert \alpha_2(h)-\alpha_1(h)\vert}{\vert \ve(h)\vert }  \right) \to 0 \text{ when } h \to 0,
   \end{equation} 
   and 
\begin{equation}
\label{uh-debut2}
 u_h =\ \frac{1}{\sqrt2}\big (1+\mathcal  O\left ( \vert \mu(h)\vert \right) \big) \,  \varphi_1\, + \,  \frac{1}{\sqrt2}\big (1+\mathcal  O\left (  \vert \mu(h)\vert \right) \big) \,  \varphi_2.
\end{equation}
Moreover, using \eqref{eq.G0}, \eqref{eq.B0-B1},    and~\eqref{beta-h2-b}, the equality \eqref{uh-debut2} implies that there exists $c>0$ such that  for every $h$ small enough,
 \begin{align}
\nonumber
  u_h &=\ \frac{1}{\sqrt2}\left  (1+ \mathcal O\left (  \vert \mu(h)\vert  \right) +\mathcal O\big (e^{-\frac ch}\big )\right) \,  \widetilde u_1\, + \,  \frac{1}{\sqrt2}\left  (1+\mathcal  O\left (  \vert \mu(h)\vert \right)+\mathcal O\big (e^{-\frac ch}\big ) \right)   \widetilde u_2\\
  \label{uh-fin2}
  &\quad +\, \mathcal  O\big (e^{-\frac ch}\big )  \ \text{  in } L^2_w(\Omega).
  \end{align} 
    From \eqref{uh-debut2},  one deduces the following proposition  which implies, using in addition  \eqref{nuh} and~\eqref{beta-h2-b}, the asymptotic estimates~\eqref{eq.QSD-1-h2} and \eqref{eq.QSD-2-h2}
    in Theorem~\ref{th.main2}.
   \begin{proposition} 
    \label{pr.proj'}
Let us assume that the  hypotheses  \textbf{[H-Well]} and \textbf{[H2]} are satisfied. Let  $u_h$ be the principal eigenfunction of $L_{f,h}^{D,(0)}$ satisfying \eqref{uh.norma} and let $(\widetilde u_{j})_{j\in\{1,2\}}$ be the functions introduced in~\eqref{eq.chi1}.  
Then, for any  open subset $\ft O$ of $\Omega$ and   for  $h>0$ small enough:
\begin{enumerate}
\item[i)] When, for some $i\in\{1,2\}$, $\ft O\cap \{x_{1},x_{2}\} =\{x_{i}\}$, it holds
\begin{align*}
\int  _{\ft O} u_h \ e^{- \frac{2}{h} f } &= \frac1{\sqrt2}\left (1+\mathcal   O( \vert \mu(h)\vert)+\mathcal O\big (e^{-\frac ch}\big )\right)\int  _{\ft O} \widetilde u_i \ e^{- \frac{2}{h} f } 
\\
&=\frac1{\sqrt2}\frac{(h \pi)^{\frac{d}{4} }}{\big(\det \Hess f(x_{i})\big)^{\frac14}}e^{-\frac{1}{h} \min_{\overline\Omega}f} 
\ \big(1+\mathcal O(\vert \mu(h)\vert)+\mathcal O(h)\big),
\end{align*}
where   $c>0$ is independent of $h$ and $ \mu(h)$ satisfies~\eqref{beta-h2-b}.  
\item[ii)] When $\ft O\cap \{x_{1},x_{2}\} =\emptyset$, one has
\begin{align*}
\int  _{\ft O} u_h \ e^{- \frac{2}{h} f } &= \mathcal O\left (e^{-\frac 1h( \min_{\overline\Omega}f+c)}\right ),  \, \text{  where $c>0$ is independent of $h$}.
\end{align*}

\end{enumerate}
\end{proposition}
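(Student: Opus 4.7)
The plan is to follow the template of the proof of Proposition~\ref{pr.proj} almost verbatim, the only structural change being that the decomposition \eqref{uh-fin2} now distributes $u_h$ symmetrically over the two quasi-modes $\widetilde u_1$ and $\widetilde u_2$, each with leading coefficient $\frac{1}{\sqrt 2}$, in place of the asymmetric decomposition \eqref{uh-final-1}. First I would multiply \eqref{uh-fin2} by $e^{-\frac 2h f}$ and integrate over $\ft O$, reducing the problem to estimating the two building blocks $\int_{\ft O}\widetilde u_i\, e^{-\frac 2h f}$ for $i\in\{1,2\}$ and controlling an $L^2_w$ remainder of size $\mathcal O(e^{-c/h})$.

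Next I would compute each building block using the definition $\widetilde u_i=\chi_i/\|\chi_i\|_{L^2_w}$ from \eqref{eq.chi1} together with the Laplace asymptotics already spelled out in \eqref{eq.laplace}: the numerator $\int_{\ft O}\chi_i\, e^{-\frac 2h f}$ is evaluated via \eqref{eq.laplace} with $k=1$ and the normalization $\|\chi_i\|_{L^2_w}$ via \eqref{eq.laplace} with $k=2$ (since $\|\chi_i\|_{L^2_w}^2=\int\chi_i^2\, e^{-\frac 2h f}$). Using that $\chi_i\equiv 1$ on a neighborhood of $x_i$ contained in $\ft C_i\cap\{f<\min_{\pa\Omega}f\}$ and that $x_j\notin\supp\chi_i$ for $j\neq i$, the resulting ratio yields, when $x_i\in \ft O$,
$$
\int_{\ft O}\widetilde u_i\, e^{-\frac 2h f}\ =\ \frac{(h\pi)^{\frac d4}}{\big(\det \Hess f(x_i)\big)^{\frac14}}\, e^{-\frac 1h\min_{\overline\Omega}f}\,\big(1+\mathcal O(h)\big),
$$
and the same integral is of order $\mathcal O\bigl(e^{-\frac1h(\min_{\overline\Omega}f+c)}\bigr)$ when $x_i\notin \ft O$ (for some $c>0$ depending on $\dist(\ft O,x_i)$ and on the support of $\chi_i$). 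In particular, when $\ft O\cap\{x_1,x_2\}=\emptyset$, both building blocks are already exponentially smaller than $e^{-\min_{\overline\Omega}f/h}$.

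Finally, I would handle the $L^2_w$ remainder in \eqref{uh-fin2} by Cauchy--Schwarz: its contribution to $\int_{\ft O}u_h\, e^{-\frac 2h f}$ is bounded by $\mathcal O(e^{-c/h})\,\|\mathbf{1}_{\ft O}\|_{L^2_w}=\mathcal O\bigl(h^{\frac d4}e^{-\frac 1h(\min_{\overline\Omega}f+c)}\bigr)$, which is absorbed into the $\mathcal O(e^{-c/h})$ prefactor of the main term in case $i)$ and is of the advertised size in case $ii)$. Putting these three ingredients together, and using \eqref{beta-h2-b} to replace each $\mathcal O(|\mu(h)|)$ by $\mathcal O\bigl(|\alpha_2(h)-\alpha_1(h)|/|\varepsilon(h)|\bigr)$, gives the two claims. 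I do not expect a serious obstacle here: the delicate point, namely extracting the symmetric $\frac{1}{\sqrt 2}$ repartition from the $2\times 2$ matrix analysis of Proposition~\ref{pr1} together with the positivity constraint $u_h>0$ on $\Omega$, has already been carried out in \eqref{uh-h2}--\eqref{uh-fin2}; the remaining work is essentially Laplace's method and bookkeeping of the remainders.
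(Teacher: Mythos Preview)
Your proposal is correct and follows exactly the paper's approach: the paper's proof reads in its entirety ``The proof of Proposition~\ref{pr.proj'} is similar to that one of Proposition~\ref{pr.proj} using~\eqref{uh-fin2} instead of \eqref{uh-final-1}'', and this is precisely what you carry out---integrating the decomposition~\eqref{uh-fin2} over $\ft O$, invoking the Laplace estimates~\eqref{eq.laplace} for the building blocks $\int_{\ft O}\widetilde u_i\,e^{-\frac{2}{h}f}$, and controlling the $L^2_w$ remainder by Cauchy--Schwarz. Your treatment is in fact more detailed than the paper's one-line proof.
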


\begin{proof}
The proof of Proposition \ref{pr.proj'} is similar to that one of Proposition~\ref{pr.proj}
using~\eqref{uh-fin2} instead of \eqref{uh-final-1}. 
%, one has for $h$ small enough
%\begin{align*}
%\int  _{\ft O}u_h \ e^{-\frac{2}{h} f} &=\frac1{\sqrt 2}\left (1+\mathcal O(\vert \mu(h)\vert)+\mathcal O\big (e^{-\frac ch}\big )\right)\int _{\ft O}  \widetilde u_{1}\, e^{- \frac{2}{h} f }\\
%&\quad +
%\frac1{\sqrt 2}\left (1+\mathcal O(\vert \mu(h)\vert)+\mathcal O\big (e^{-\frac ch}\big )\right)\int  _{\ft O}  \widetilde u_{2}\, e^{- \frac{2}{h} f }  + \mathcal  O\left (e^{-\frac 1h( \min_{\overline\Omega}f+c)}\right ),
%\end{align*}
%where $c>0$ is independent of $h$ and  $ \mu(h)$ satisfies~\eqref{beta-h2-b}.  The results of 
%Proposition~\ref{pr.proj'} then follow using~\eqref{eq.laplace}. 
\end{proof}

\begin{remark}
\label{re.sym2}
Let us  assume as in Remark~\ref{re.sym1} that there exists an isometry
$ \Phi:\overline\Omega\to\overline\Omega$ 
satisfying \eqref{eq.sym}
and denote by $\ft O_{1}\subset \Omega$ and $\ft O_{2}\subset \Omega$
 two disjoint  open sets  such that
$x_{i}\in \ft O_{i}$ for $i\in\{1,2\}$.
Using Proposition~\ref{pr.proj'} and the fact that, for every $h$ small enough,
$\widetilde u_{1}\circ \Phi=\widetilde u_{2}$, $\alpha_{1}(h)=\alpha_{2}(h)$ and hence $\mu(h)=0$,
it holds for $i\in\{1,2\}$:
\begin{align*}
\int  _{\Omega} u_h \ e^{- \frac{2}{h} f }&=
\big(1+\mathcal O(e^{-\frac ch})\big)\int  _{\ft O_{1}\cup \ft O_{2}} u_h \ e^{- \frac{2}{h} f }\\
&=\big(\sqrt 2+\mathcal O(e^{-\frac ch})\big)\int  _{ \ft O_{i}}  \widetilde u_{i} \ e^{- \frac{2}{h} f }.
\end{align*}
This implies the first part of Theorem~\ref{pr.main}. 
\end{remark}

\noindent
From \eqref{uh-debut2} and 
Proposition~\ref{interaction1}, one deduces 
 the following estimates.  

\begin{proposition} \label{lemm2'} 
Let us assume that the  hypotheses  \textbf{[H-Well]} and \textbf{[H2]} are satisfied.  Let  $u_h$ be the principal eigenfunction of $L_{f,h}^{D,(0)}$ satisfying \eqref{uh.norma}. Let moreover $(\widetilde u_{j})_{j\in\{1,2\}}$ and  $(\widetilde \psi_{i_{j}})_{(i,j)\in\bigcup_{p=1}^2 \{p\}\times \{1,\dots, \ft n_{p}\}}$ be as in Proposition~\ref{pr.Schuss}, and $(\psi_{i_{j}})_{(i,j)\in\bigcup_{p=1}^2 \{p\}\times \{1,\dots, \ft n_{p}\}}$ be  defined by~\eqref{eq.B0-B1}. Then, there exists $c>0$ such that  in the limit $h\to0$:
\begin{enumerate}
\item[i)] For every $k\in\{1,2\}$ and $j\in\{1,\dots,\ft n_{k}\}$,
  \begin{align*}
   \big \lp    \nabla   u_h,   \psi_{k_{j}}  \big \rp_{L^2_w}
% &=    \frac1{\sqrt 2}\, \big \langle \nabla  \widetilde u_k, \widetilde \psi_{k_j}  \big \rangle_{L^2_w} \, \big(1+\mathcal O(\vert \mu(h)\vert )+\mathcal O\big (e^{-\frac ch}\big )\big) \\ 
&=-\frac{C_{k,j,k}}{\sqrt 2} \,h^{-\frac34}\,  e^{-\frac Hh }   \, \big (1+\mathcal O(\vert \mu(h)\vert) +   \mathcal   O(h )   \big),
\end{align*} 
where $C_{k,j,k} $ is defined in \eqref{eq.C} and $\mu(h)$  satisfies \eqref{beta-h2-b}. 

\item[ii)] For every  $j\in\{1,\dots,\ft m_{3}\}$, 
\begin{align*}
   \big \lp    \nabla   u_h,   \psi_{3_j}  \big \rp_{L^2_w}&=
\mathcal O\left(h^{-\frac12}\, e^{-\frac Hh } \right).
\end{align*} 
\item[iii)]  For every  $j\in\{\ft m_3+1,\dots,\ft n_{3}\}$,
  \begin{equation*}
   \big \lp    \nabla    u_h,   \psi_{3_j}  \big \rp_{L^2_w}   =\mathcal O\left( e^{-\frac{1}{h}(H+c)}  \right ).
  \end{equation*}  
  \end{enumerate}
   \end{proposition}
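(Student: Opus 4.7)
The plan is to simply combine the decomposition of $u_h$ derived in \eqref{uh-debut2} with the inner product estimates collected in Proposition~\ref{interaction1}. Indeed, under \textbf{[H2]} we know $\varepsilon(h)\neq 0$, so that \eqref{equa-uh} applies, and the analysis of the sign of $\beta(h)$ leading to \eqref{beta-h2-b} gives
\begin{equation*}
u_h=\tfrac{1}{\sqrt 2}\bigl(1+\mathcal O(|\mu(h)|)\bigr)\varphi_1+\tfrac{1}{\sqrt 2}\bigl(1+\mathcal O(|\mu(h)|)\bigr)\varphi_2\quad\text{in }\Ran \pi_h^{(0)},
\end{equation*}
with $\mu(h)=\mathcal O\bigl(|\alpha_2(h)-\alpha_1(h)|/|\varepsilon(h)|\bigr)$. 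Applying $\nabla$ to this identity and then taking the $L^2_w$-scalar product with $\psi_{i_j}$, the whole problem reduces to computing, for each $(i,j)$, the two scalar products $\langle\nabla\varphi_1,\psi_{i_j}\rangle_{L^2_w}$ and $\langle\nabla\varphi_2,\psi_{i_j}\rangle_{L^2_w}$, which are precisely the quantities estimated in Proposition~\ref{interaction1}.

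First, I would treat item~i). For $k\in\{1,2\}$ and $j\in\{1,\dots,\ft n_k\}$, item~i) of Proposition~\ref{interaction1} gives the main contribution $\langle\nabla\varphi_k,\psi_{k_j}\rangle_{L^2_w}=-C_{k,j,k}\,h^{-3/4}e^{-H/h}(1+\mathcal O(h))$, while item~ii) of Proposition~\ref{interaction1} (applied with the roles of $k$ and $p$ as specified there) yields $\langle\nabla\varphi_p,\psi_{k_j}\rangle_{L^2_w}=\mathcal O(e^{-(H+c)/h})$ for $p\in\{1,2\}\setminus\{k\}$. Inserting these two estimates into the decomposition above and collecting error terms gives
\begin{equation*}
\langle \nabla u_h,\psi_{k_j}\rangle_{L^2_w}=-\tfrac{C_{k,j,k}}{\sqrt 2}\,h^{-3/4}e^{-H/h}\bigl(1+\mathcal O(|\mu(h)|)+\mathcal O(h)\bigr),
\end{equation*}
since $|\mu(h)|$ dominates (or is at least comparable to) any remainder of the form $\mathcal O(h^{3/4}e^{-c/h})$ that may appear from the off-diagonal term.

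Next, for item~ii), i.e.\ when $j\in\{1,\dots,\ft m_3\}$, item~iii) of Proposition~\ref{interaction1} gives for both $k\in\{1,2\}$ the bound $\langle\nabla\varphi_k,\psi_{3_j}\rangle_{L^2_w}=\mathcal O(h^{-1/2}e^{-H/h})$; multiplying by the $\mathcal O(1)$ coefficients in the expansion of $u_h$ yields immediately $\langle\nabla u_h,\psi_{3_j}\rangle_{L^2_w}=\mathcal O(h^{-1/2}e^{-H/h})$. Finally, for item~iii), i.e.\ when $j\in\{\ft m_3+1,\dots,\ft n_3\}$, item~iv) of Proposition~\ref{interaction1} gives for both $k\in\{1,2\}$ that $\langle\nabla\varphi_k,\psi_{3_j}\rangle_{L^2_w}=\mathcal O(e^{-(H+c)/h})$, which again combines with the $\mathcal O(1)$ coefficients in \eqref{uh-debut2} to produce the claimed $\mathcal O(e^{-(H+c)/h})$ bound.

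There is essentially no obstacle in this proof: everything has been prepared in the previous results. The only point that deserves some care is checking in item~i) that the $\mathcal O(|\mu(h)|)$ remainder coming from the coefficient of $\varphi_k$ indeed absorbs the exponentially small contribution from the $\varphi_p$-term ($p\neq k$), but this is automatic since $|\mu(h)|\to 0$ is only polynomially small in general (it is $\mathcal O(\sqrt h)$ when $\partial \ft C_1\cap \partial \ft C_2\neq\emptyset$ and bounded below in a weaker sense when the wells are separated) while the cross contribution is $\mathcal O(e^{-c/h})$ relative to the main term.
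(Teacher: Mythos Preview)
Your proposal is correct and follows exactly the paper's approach: the paper simply states that the proposition follows from \eqref{uh-debut2} and Proposition~\ref{interaction1}, and your argument spells this out case by case. One small remark on your closing paragraph: the off-diagonal contribution in item~i), which is $\mathcal O(h^{3/4}e^{-c/h})$ relative to the main term, need not be absorbed by $\mathcal O(|\mu(h)|)$ (indeed $\mu(h)$ can vanish identically, e.g.\ in the symmetric situation of Theorem~\ref{pr.main}); it is the $\mathcal O(h)$ term in the final error that swallows it.
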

   
\begin{proof}[End of the proofs Theorems~\ref{th.main2} and \ref{pr.main}] Let us assume that the  hypotheses  \textbf{[H-Well]} and \textbf{[H2]} are satisfied. It remains to prove the asymptotic estimates~\eqref{eq.h2r} and \eqref{eq.h2r-b}.  We proceed as we did at the end of Section \ref{se-t1} to prove \eqref{t-1},~\eqref{t-2}, and~\eqref{t-3}.
Let us then consider   $F\in L^\infty(\pa \Omega,\mathbb R)$. 
%Recall  that from \eqref{eq.loi-bord}, to prove \eqref{eq.h2r} and \eqref{eq.h2r-b},
% it is enough to give
%sharp asymptotic estimates when $h\to 0$ of  $\lambda_1(h)$, $\int_\Omega u_h\,  e^{-\frac{2}{h}  f}$, 
%and $\displaystyle\int_{\Sigma}  F \,  \partial_{n}u_h\,  e^{-\frac{2}{h} f}$.
% are respectively given in \eqref{lh1-h2-c1}, \eqref{lh1-h2-c2}  and in Proposition~\ref{pr.proj'}, and one is therefore led to estimate when $h\to 0$, for an open subset $\Sigma$ of $\pa \Omega$,  the term $\displaystyle\int_{\Sigma}  F \,  \partial_{n}u_h\,  e^{-\frac{2}{h} f}$. 
\medskip

\noindent
Let us first assume that $\overline{\Sigma}$ does not contain any of the $z_{i,j}$'s for  $(i,j)\in\bigcup_{p=1}^2 \{p\}\times \{1,\dots, \ft n_{p}\}$. Then, using~\eqref{eq.dec-uh} together with Propositions~\ref{co.bdy-estim}~and~\ref{lemm2'}, one has in the limit $h\to 0$:
\begin{equation}
\label{eq.bdy1'}
\int_{\Sigma} F\,\partial_{n}u_h   e^{-\frac{2}{h} f} =\mathcal O \left(e^{-\frac 1h(\min_{\pa \Omega} f+H+c)} \right),
\end{equation}
\noindent
for some   $c>0$ independent of $h$.\\ Let us now assume that  $ {\Sigma}\cap\big \{z_{i,j}, (i,j)\in\bigcup_{p=1}^2 \{p\}\times \{1,\dots, \ft n_{p}\}\big\}=\{z_{p,\ell}\}$ and  that $F$ is $C^{\infty}$ around $z_{p,\ell}$. Then, using again~\eqref{eq.dec-uh} together with Propositions~\ref{co.bdy-estim}~and~\ref{lemm2'}, one has when $h\to 0$,
defining $H':=\min_{\pa \Omega} f+H$, 
\begin{align}
\nonumber
\int_{\Sigma} F\, \partial_{n}u_h\,   e^{-\frac{2}{h} f} &= \big\langle \nabla u_h , \psi_{p_\ell} \big\rp_{L^2_w} \ \int_{\Sigma}  F \,  \psi_{p_{\ell}} \cdot n \  e^{- \frac{2}{h}  f} +
\mathcal O\left( h^{\frac{d-6}{4}}   e^{-\frac 1h(H'+c)} \right )\\
\nonumber
&=
 - \sqrt2\, \partial_n f(z_{p,\ell})   \,  \frac{\big(\det \Hess f(x_{p})\big)^{\frac14}}{\big( \det \Hess f |_{ \partial \Omega}(z_{p,\ell})   \big)^{\frac12}} \, \pi ^{\frac{d-2}{4}}\, h^{\frac{d-6}{4}}\,  e^{-\frac {H'}h}\\
 \label{eq.bdy2'}
 &\quad   \times\big (F(z_{p,\ell})+\mathcal O(\vert \mu(h)\vert) +   \mathcal   O(h )   \big),
\end{align}
where $c>0$ is independent of $h$ and~$\mu(h)$  satisfies~\eqref{beta-h2-b}. The asymptotic estimates \eqref{eq.h2r} and \eqref{eq.h2r-b} are then straightforward consequences of  
\eqref{eq.loi-bord} and
\eqref{eq.bdy1'}, \eqref{eq.bdy2'}, using in addition
\eqref{lh1-h2-c1},~\eqref{lh1-h2-c2},  and Proposition~\ref{pr.proj'}.  This concludes the proof of Theorems~\ref{th.main2} and \ref{pr.main}.
   \end{proof}

\bibliography{double-well} %You need to replace "rsc" on this
%line with the name of your .bib file
\bibliographystyle{plain} %the RSC's .bst file

\end{document}